\numberwithin{equation}{section}
\newcommand{\BOX}{\ensuremath\Box}
\newtheorem{theorema}{Theorem}
\newtheorem{theorem}{Theorem }[section]
\newtheorem{corollary}[theorem]{Corollary}
\newtheorem{definition}[theorem]{Definition}
{\theorembodyfont{\rmfamily}}
{\theorembodyfont{\rmfamily}}
{\theorembodyfont{\rmfamily}}
\newtheorem{lemma}[theorem]{Lemma}
\newtheorem{proposition}[theorem]{Proposition}
{\theorembodyfont{\rmfamily}\newtheorem{remark}[theorem]{Remark}}
{\theorembodyfont{\rmfamily}}
\newcommand{\N}{\mathbb{N}}
\newcommand{\Z}{\mathbb{Z}}
\newcommand{\R}{\mathbb{R}}
\newcommand{\PP}{\mathbb{P}}
\newcommand{\E}{\mathbb{E}}
\newcommand{\Ent}{{\rm Ent}}
\newcommand{\dd}{\,{\rm d}}
\newcommand{\e}{\varepsilon}
\DeclareMathOperator{\var}{var}
\DeclareMathOperator{\osc}{osc}
\newcommand{\bxi}{\boldsymbol{\xi}}
\def\Xint#1{\mathchoice
	{\XXint\displaystyle\textstyle{#1}}
	{\XXint\textstyle\scriptstyle{#1}}
	{\XXint\scriptstyle\scriptscriptstyle{#1}}
	{\XXint\scriptscriptstyle\scriptscriptstyle{#1}}
	\!\int}
\def\XXint#1#2#3{{\setbox0=\hbox{$#1{#2#3}{\int}$}
		\vcenter{\hbox{$#2#3$}}\kern-.5\wd0}}
\def\dashint{\Xint-}
\newenvironment{proof}{{\vskip\baselineskip\noindent\textbf{Proof:}}}
{\hspace*{.1pt}\hspace*{\fill}\BOX\vskip\baselineskip}
\newenvironment{proofx}[1]
{\vskip\baselineskip\noindent\textbf{Proof of {#1}:}}
{\hspace*{.1pt}\hspace*{\fill}\BOX\vskip\baselineskip}
\begin{document}

\title{
Wall laws for viscous flows in 3D randomly rough pipes: optimal convergence rates and stochastic integrability 
}

\author{
Mitsuo Higaki\thanks{Department of Mathematics, Graduate School of Science, Kobe University, 1-1 Rokkodai, Nada-ku, Kobe 657-8501, Japan. 
\textit{Email address:}\texttt{higaki@math.kobe-u.ac.jp}
}
\and
Yulong Lu \thanks{206 Church St. SE, School of Mathematics, University of Minnesota, Minneapolis, MN 55455, USA.
\textit{Email address:}\texttt{yulonglu@umn.edu}}
\and
Jinping Zhuge \thanks{Morningside Center of Mathematics, Academy of Mathematics and Systems Science, Chinese Academy of Sciences, Beijing 100190, China.
\textit{Email address:}\texttt{jpzhuge@amss.ac.cn}} 
}	
\date{}

\maketitle

\noindent {\bf Abstract.}
This paper is concerned with effective approximations and wall laws of viscous laminar flows in 3D pipes with randomly rough boundaries. The random roughness is characterized by the boundary oscillation scale $\varepsilon \ll 1 $ and a probability space with ergodicity quantified by functional inequalities. The results in this paper generalize the previous work for 2D channel flows with random Lipschitz boundaries to 3D pipe flows with random boundaries of John type. Moreover, we establish the optimal convergence rates and substantially improve the stochastic integrability obtained in the previous studies. Additionally, we prove a refined version of the Poiseuille's law in 3D pipes with random boundaries, which seems unaddressed in the literature. Our systematic approach combines several classical and recent ideas (particularly from homogenization theory), including the Saint-Venant's principle for pipe flows, the large-scale regularity theory over rough boundaries, and applications of functional inequalities.

\noindent {\bf Keywords.} Stationary Navier-Stokes system, Navier's wall laws, randomly rough pipes, functional inequalities, large-scale regularity

\tableofcontents

    \section{Introduction}
    \label{sec.intro}

    \subsection{Motivation}

\noindent
This paper is devoted to the effective approximation of viscous flows, governed by the stationary Navier-Stokes system, in 3D pipes with randomly rough boundaries. Understanding the effect on fluid induced by boundary roughness remains a core challenge in fluid dynamics even in the laminar regime when the Reynolds number of the flow is sufficiently small. Indeed, laminar flows with rough boundaries discussed in this paper naturally appear in biomechanics (e.g., blood flow in vessels) and engineering (e.g., corroded or corrugated pipes). If the roughness is relatively mild, one may address such problems by finding analytic solutions \cite{CS72,SJZ01,Wang06,MeiJin2016} or by numerical computations \cite{FN77,DS79,NOK84,MohFlo2013,LLS2019}, albeit not exhaustively.
        
These direct approaches are unavailable or require extensive computational resources if the boundaries are more complicated and wildly oscillating at small scales. In the presence of such irregularities, it becomes impractical to record the fluid dynamics meticulously; instead, one should focus on understanding the average fluid configuration. Wall laws or effective boundary conditions serve this purpose. Here we quote a definition from the pioneering work by J\"{a}ger-Mikeli\'{c} \cite{JM01}: ``In general, the approach of replacing the no-slip condition at rough boundaries with the non-penetration condition plus a relation between the tangential velocity and the shear stress, is called the \textit{wall laws}." Namely, the rough boundary is replaced by an artificially smoothed one, and an effective boundary condition is imposed to represent the averaged effect of the roughness. Typically, the effective boundary condition for laminar flows is known to be Navier-type slip, and thus called Navier's wall law.

As a representative work from the engineering side, Nikuradse \cite{Ni50} empirically studied the wall laws by detailed experiments for turbulent flows; see \cite{KSCT2005,FlaSch2014,CHSF2021} for recent progress in this direction. Achdou-Pironneau-Valentin \cite{APV98} verified by numerical simulation the Navier's wall laws for laminar flows, derived by two-scale asymptotic expansion, for a periodic boundary with roughness in scales $\e\ll1$; see also \cite{AP95,APV98b}. Then the mathematically rigorous justification of the Navier's wall laws is obtained by J\"{a}ger-Mikeli\'{c} for 2D periodic channels in \cite{JM01} and for 3D slabs between periodic plates in \cite{JM03}, with the optimal $O(\e^{3/2})$ convergence rates, i.e., the error bounds in $L^2$ space between the real solution and the effective approximation. In most physically relevant scenarios, however, the boundaries are self-similar with non-periodic or random structures. In a series of papers \cite{BasGer2008,Ger09,GerMas2010}, G\'{e}rard-Varet with collaborators studied the Navier's wall laws in 2D channels with randomly oscillating boundaries. The seminal work by Basson--G\'{e}rard-Varet \cite{BasGer2008} established the Navier's wall law for rough boundaries generated by stationary random processes and obtained suboptimal convergence rates $o(\e)$. The near-optimal convergence rate $O(\e^{3/2}|\log \e|^{1/2})$ was obtained by \cite{Ger09} under the assumption of finite range of dependence on the random boundaries. Finally, G\'{e}rard-Varet--Masmoudi \cite{GerMas2010} proved the optimal convergence rates $O(\e^{3/2})$ for quasi-periodic boundaries satisfying a Diophantine condition.

In summary, the previous mathematical study of wall laws mainly targets laminar flows in 2D channels or 3D slabs, which are less applicable compared to those in 3D pipes. Additionally, all of these studies assume that the boundaries are at least locally Lipschitz. The objective of this paper is to develop a systematic approach to study wall laws and quantitative convergence rates for laminar flows in 3D pipes with random roughness. We will also relax the boundary regularity from local Lipschitz boundaries to more generalized boundaries of John type, which are not necessarily local graphs of functions and allow for fractals and cusps. Moreover, we will substantially improve the stochastic integrability under certain ergodicity conditions.

    \subsection{Main results}

We first define the infinite pipes with rough boundaries.
Let $D_r = \{ x = (x_1,x_2,x_3) \in \R^3~|~ x_2^2 + x_3^2 < r^2  \}$ denote an infinite cylinder with radius $r>0$. Then we consider a rough infinite cylinder (connected open set) $D^\e$ satisfying the thickness condition
\begin{equation}\label{def.De.pipelike}
    D_1 \subset D^\e \subset D_{1+\e}.
\end{equation}
Here $0<\e \ll 1$ is the thickness of the rough boundary of $D^\e$, as well as the wavelength of the oscillation. Due to this set-up, we often write $D_1$ as $D^0$ without confusion. In view of the geometry, it is convenient to use the cylindrical coordinates $(x_1,r,\theta)$ on $\R^3$
\begin{align}\label{def.cyl.coord}
\begin{bmatrix}
x_1\\
x_2\\
x_3
\end{bmatrix}
=
\begin{bmatrix}
x_1\\
r\cos \theta\\
r\sin \theta
\end{bmatrix},
\quad
(x_1,r,\theta)
\in
\R
\times
[0,\infty)
\times
[0,2\pi).
\end{align}
For example, the cylinder $D^0$ can be simply expressed as $D^0 = \{ r<1\}$. For a specific point $x = (x_1,x_2,x_3)$, we use $(x_1 ,r_x, \theta_x)$ to denote its cylindrical coordinates. Moreover, given $a \ge \e$ and $x\in \partial D^0$, let $S_{a}(x)$ be the cylindrical cube defined by
\begin{align}\label{def.cyl.cube}
    S_{a}(x) 
    = \{ y\in \R^3 ~|~
    |y_1 -x_1|<a, 
    \mkern9mu 
    |\theta_y - \theta_x|<a, \mkern9mu 
    |r_y - r_x| < a \}.
\end{align}
Define $S_{a,+}(x) = D^\e \cap S_a(x)$.

The mathematical study of fluids occupying $D^\e$ requires some minimal regularity on the boundary known as John domains \cite{John1961} for which the divergence equation is solvable in $L^2$ space \cite{ADM06}. We first recall the definition of the usual bounded John domains.

\begin{definition}\label{def.John}
    Let $\omega \subset \R^d$ be an open bounded set and $\tilde{x} \in \omega$. We say that $\omega$ is a John domain 
    (or a domain with boundary of John type)
    with respect to the center $\tilde{x}$ and with constant $L>0$ if for any $y\in \omega$, there exists a Lipschitz mapping $\rho:[0,|y-\tilde{x}|] 
    \to \omega$ with Lipschitz constant $L$, such that $\rho(0) = y, \rho(|y-\tilde{x}|) = \tilde{x}$ and ${\rm dist}(\rho(t),\partial\omega) \ge t/L$ for all $t\in [0, |y-\tilde{x}|]$. 
\end{definition}

Examples of John domains are: Lipschitz domains, one-sided NTA domains, domains with inward cusps or certain fractals such as Koch's snowflake. Notice that domains with outward cusps are not John domains. 

The following definition is a variant of the unbounded John domains in \cite{HPZ-21,HZ23}.
	\begin{definition}\label{def.John2}
		Let $D^\e$ be an unbounded domain satisfying \eqref{def.De.pipelike}.
		We say that $D^\e$ is a \emph{John domain} above $\e$-scale with constants $(L,K)$ if for any $x \in \partial D^0$ and any $R \in [\e, 1]$, there exists a bounded John domain $S^*_R(x)$ with respect to the center $x_R \in D^0$ and with constant $L>0$
		according to Definition \ref{def.John}
		 such that
		\begin{equation}\label{e.exisinterdom}
			S_{R,+}(x) \subset S^*_R(x) \subset S_{KR,+}(x),
		\end{equation}
		where $S_{R,+}(x) = D^\e \cap S_R(x)$.
	\end{definition}

The constants $(L, K)$ do not play roles in our proof (except for the dependence of constant $C$) and we may assume $K = 2$ without loss of generality: $(L, K)=(L, 2)$.

Next we define the flux in John domains $D^\e$. Let $\Gamma^\e = \partial D^\e$. Let $-\infty\le a < b\le \infty$.  Let $D^\e_{a,b} = D^\e \cap \{ a<x_1 < b \}$ and $\Gamma^\e_{a,b} = \Gamma^\e \cap \{ a<x_1 < b \}$. Assume that $u = (u_1,u_2,u_3)\in H^1(D^\e_{a,b})^3$ satisfies $\nabla\cdot u = 0$ in $D^\e_{a,b}$ and $u = 0$ on $\Gamma^\e_{a,b}$ (in the sense of trace). We say that the 
flux of $u$ in $D^\e_{a,b}$ is $\phi$ if it holds that, for any $\varphi(x_1) \in C_0^\infty([a,b])$,
\begin{equation}\label{def.flux}
    \int_{D^\e_{a,b}} u_1(x) \varphi(x_1) \dd x = \phi \int_a^b \varphi(x_1) \dd x_1.
\end{equation}
If this is the case, we write $\Phi^\e [u] = \phi$.
Alternatively, one can extend $u$ by zero to a slightly larger cylinder (e.g. to $D_{1+\e}$) so that the flux can be define in the usual way in terms of the trace on $\Sigma^\e_t := D^\e \cap \{ x_1 = t\}$. In this extension, the divergence-free condition is preserved. Indeed, we can show that, for any $t\in [a,b]$, 
\begin{equation}
    \Phi^\e[u] = \int_{\Sigma^\e_t} e_1\cdot u \dd \sigma = \int_{\Sigma^\e_t} u_1 \dd \sigma,
\end{equation}
where $e_1 = (1,0,0)$ and $\dd\sigma$ is the surface measure. Similarly, for $u\in H^1(D^0_{a,b})^3$ with $\nabla\cdot u = 0$ in $D^0_{a,b}$ and $u\cdot \nu = 0$ on $\Gamma^0_{a,b}$ (where $\nu$ is the inward normal vector on $\Gamma^0_{a,b}$), we say that the flux of $u$ in $D^0_{a,b}$ is $\phi$ if it holds that, for any $t\in [a,b]$,
\begin{equation}
    \Phi^0[u] = \int_{\Sigma_t^0} e_1\cdot u \dd \sigma = \int_{\Sigma_t^0} u_1 \dd \sigma.
\end{equation}

Now we introduce the stationary Navier-Stokes system in a rough cylinder $D^\e$ with the no-slip boundary condition on $\Gamma^\e = \partial D^\e$.
Consider
\begin{equation}\tag{NS}\label{eq.NS}
\left\{
\begin{array}{ll}
-\Delta u^\e + u^\e\cdot \nabla u^\e+ \nabla p^\e = 0 &\mbox{in}\ D^\e,\\
\nabla\cdot u^\e = 0 &\mbox{in}\ D^\e,\\
\Phi^\e[u^\e] = \phi,\\
u^\e = 0 &\mbox{on}\ \Gamma^\e.
\end{array}\right.
\end{equation}
Since we are interested in the laminar regime, we have assumed that the viscosity is equal to $1$ and assume that the flux $\phi$ is small. The existence and uniqueness of solutions for \eqref{eq.NS}, known as the Leray's problem, can be established for small flux and remains to be an open question without smallness; see Section \ref{SVP}. In view of the formal limit $\e\to0$, it is naturally expected that the solution $u^\e$ of \eqref{eq.NS} is approximated well by the solution of the Navier-Stokes system in the smooth pipe $D^0$ with the no-slip condition on $\Gamma^0 := \partial D^0$
\begin{equation}\label{eq.NS0}
\left\{
\begin{array}{ll}
-\Delta u^0 + u^0\cdot \nabla u^0 + \nabla p^0 = 0 &\mbox{in}\ D^0,\\
\nabla\cdot u^0 = 0 &\mbox{in}\ D^0,\\
\Phi^0[u^0] = \phi,\\
u^0 = 0 &\mbox{on}\ \Gamma^0. 
\end{array}\right.
\end{equation}
This approach to find approximations is called the Dirichlet's wall law. The unique bounded solution of \eqref{eq.NS0} is, under the smallness on $\phi$, the famous Hagen-Poiseuille flow in a pipe
\begin{equation}\label{eq.HPflow1}
u^0 = \frac{2\phi}{\pi}(1-r^2)e_1,
\qquad
p^0=-\frac{8\phi}{\pi} x_1.
\end{equation}
The Hagen-Poiseuille flow $u^0$ does not decay along the $x_1$-axis. Since we are considering the solutions of \eqref{eq.NS} that are near $u^0$ in appropriate topologies, we introduce
\begin{equation}
    L^2_{\rm uloc}(D^\e):= \Big\{ f\in L^2_{\rm loc}(D^\e)~\Big|~ \|f \|_{L^2_{\rm uloc}(D^\e)} :=\sup_{k\in \R} \|f \|_{L^2(D^\e_{k,k+1})} < \infty  \Big\}.
\end{equation}
It is not hard to define the Sobolev space $H^1_{\rm uloc}(D^\e)$ in a similar manner.

Without any additional self-similar structure on the rough boundary $\Gamma^\e$, we have the following approximation estimate that verifies the Dirichlet's wall law rigorously.
\begin{theorema}\label{thma}
Assume that $D^\e$ is a John domain satisfying \eqref{def.De.pipelike}.
    There exists sufficiently small $\phi_0 > 0$ such that for all $|\phi|<\phi_0$, the system \eqref{eq.NS} has a unique solution $u^\e \in H^1_{\rm uloc}(D^\e)^3$ such that
    \begin{equation}\label{0th.err}
        \|u^\e - u^0\|_{L^2_{{\rm uloc}}(D^\e)}
        \le
        C |\phi| \e, 
        \qquad
        \|\nabla u^\e - \nabla u^0\|_{L^2_{{\rm uloc}}(D^\e)}
        \le
        C |\phi| \e^{1/2}, 
    \end{equation}
    where $u^0$ is given by \eqref{eq.HPflow1} and the constant $C$ is independent of $\phi, \e$. 
\end{theorema}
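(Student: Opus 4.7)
The strategy will be to study the perturbation $v^\e := u^\e - u^0$ and exploit two crucial facts about the Hagen-Poiseuille flow: $u^0$ already solves the homogeneous stationary Navier-Stokes system pointwise on $D^\e$ (since $u^0 \cdot \nabla u^0 = 0$ and $\Delta u^0 = \nabla p^0$), while $\|u^0\|_{L^\infty(\Gamma^\e)} \le C|\phi|\e$ because $r^2-1 \le (1+\e)^2 - 1 \le C\e$ on $\Gamma^\e$. Constructing $u^\e$ therefore reduces to solving a perturbed Navier-Stokes system with a small Dirichlet datum of size $O(|\phi|\e)$ on $\Gamma^\e$, and the whole proof can be viewed as a quantitative lift-and-energy argument followed by a refined $L^2$ post-processing.

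The first main step is to produce a divergence-free lift of the datum. Fix a radial cutoff $\zeta(r) \in C^\infty(\R)$ with $\zeta \equiv 0$ for $r \le 1 - c\e$, $\zeta \equiv 1$ for $r \ge 1$, and $|\zeta'| \le C\e^{-1}$, and set $\Psi^\e := -\zeta(r)u^0$. Since $u^0 \parallel e_1 \perp e_r$ and depends only on $r$, the field $\Psi^\e$ is automatically divergence-free, matches $-u^0$ on $\Gamma^\e$, and is supported in the $\e$-annulus $A^\e := \{r \ge 1 - c\e\}\cap D^\e$; a direct calculation yields $\|\Psi^\e\|_{L^2_{\rm uloc}} \le C|\phi|\e^{3/2}$ and $\|\nabla \Psi^\e\|_{L^2_{\rm uloc}} \le C|\phi|\e^{1/2}$. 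Writing $u^\e = u^0 + \Psi^\e + w^\e$ (plus a bulk divergence-free corrector of size $O(|\phi|\e^2)$ to restore zero flux), the residual $w^\e$ satisfies the stationary Navier-Stokes perturbation system with homogeneous Dirichlet data, zero flux, and sources supported in $A^\e$. Using the Saint-Venant principle together with the Leray-type existence theory on John pipes developed in the earlier sections, for $|\phi|$ below an absolute threshold $w^\e$ exists and is unique in $H^1_{\rm uloc}(D^\e)^3$. The usual test-against-$w^\e$ energy identity---in which $\int (u^0 \cdot \nabla w^\e) \cdot w^\e$ and $\int (w^\e \cdot \nabla w^\e) \cdot w^\e$ drop out after Saint-Venant truncation and the linear-in-$w^\e$ term is absorbed by smallness of $|\phi|$---yields $\|\nabla w^\e\|_{L^2_{\rm uloc}} \le C|\phi|\e^{1/2}$; combined with the lift bound this proves the gradient estimate in \eqref{0th.err}.

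The hardest part will be the sharp $L^2$ estimate, because a direct Poincar\'e application to the $H^1$ bound would only give $C|\phi|\e^{1/2}$, so an extra factor of $\e^{1/2}$ must be extracted by separating the boundary layer from the bulk. In $A^\e$, the vanishing of $w^\e$ on $\Gamma^\e$ and a Poincar\'e inequality in a strip of width $O(\e)$ give $\|w^\e\|_{L^2(A^\e \cap D^\e_{k,k+1})} \le C\e \|\nabla w^\e\|_{L^2_{\rm uloc}} \le C|\phi|\e^{3/2}$, and an internal trace inequality bounds the restriction of $w^\e$ to $\{r = 1 - c\e\} \cap D^\e_{k,k+1}$ by $C|\phi|\e$ in $L^2$. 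In the smooth inner cylinder $B^\e := \{r < 1 - c\e\}$, where $\Psi^\e \equiv 0$ and $v^\e = w^\e$, the field $v^\e$ solves a linearized Navier-Stokes problem on a smooth cylinder with zero interior forcing and the preceding $L^2$ trace; the classical very weak Stokes estimate ($L^2$-from-$H^{-1/2}$-trace) on $B^\e$, with the Navier nonlinearity absorbed by smallness of $|\phi|$, delivers $\|v^\e\|_{L^2(B^\e \cap D^\e_{k,k+1})} \le C|\phi|\e$. Summing the two regions closes the $L^2$ bound. The main recurring technical challenges are the infinite cylindrical geometry, handled by the Saint-Venant principle to localize and pass to the limit in the energy identities, and the low regularity of the John boundary $\Gamma^\e$, which forces reliance on the scale-$\e$ uniform Bogovskii and Poincar\'e inequalities on John pipes established earlier in the paper.
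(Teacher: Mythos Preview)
Your route differs from the paper's in both steps. The paper does not use a naive cutoff lift; instead it builds the normalized boundary layer $v^\e_{\rm bl}$ as a \emph{Stokes} solution in the rough pipe (via the Green's function representation and the large-scale Lipschitz estimate over John boundaries) and proves the pointwise/large-scale size bounds $|v^\e_{\rm bl}|\le C\e$ and $|\nabla v^\e_{\rm bl}|\le C\e/\delta$ in the interior (Propositions~\ref{prop.ubl} and~\ref{prop.nbl}). With $u^\e_{\rm app}=u^0-(2\phi/\pi)v^\e_{\rm bl}$ solving the Stokes system exactly with vanishing trace and flux $\phi$, the remainder $u^\e-u^\e_{\rm app}$ is driven only by the quadratic terms and is shown to be $O(|\phi|^2\e)$ in $H^1_{\rm uloc}$ (Proposition~\ref{prop.ueapp.err1}). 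The bounds in \eqref{0th.err} then fall out from the triangle inequality and the boundary-layer size estimates $\|v^\e_{\rm bl}\|_{L^2_{\rm uloc}}\lesssim\e$, $\|\nabla v^\e_{\rm bl}\|_{L^2_{\rm uloc}}\lesssim\e^{1/2}$.

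Your gradient argument is cleaner and avoids the Green's function entirely, and it does give the second bound in \eqref{0th.err}. For the $L^2$ bound, your bulk/annulus split is a legitimate strategy, but the ``classical very weak Stokes estimate'' you invoke on an \emph{infinite} smooth cylinder in the $L^2_{\rm uloc}$ setting is not a textbook one-liner: to localize the duality argument you need Saint-Venant decay of the dual $H^2$-solution (equivalently, exponential decay of the Stokes Poisson kernel along the axis), which is essentially the same analytic input the paper packages into its Green's-function estimates (Theorem~\ref{thm.Green}). So the step is correct but not as cheap as you present it; the paper's route hides the same work inside Proposition~\ref{prop.ubl} and, as a byproduct, gets the stronger pointwise interior bound $|v^\e_{\rm bl}|\le C\e$. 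Note also that your decomposition does not deliver the sharper estimate $\|\nabla(u^\e-u^\e_{\rm app})\|_{L^2_{\rm uloc}}\le C|\phi|^2\e$, which the paper extracts for free from its construction and needs as the starting point for Lemma~\ref{lema} and Theorem~\ref{thmb}.
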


Observe that theorem \ref{thma} is established under the deterministic setting independent of the particular element $D^\e$ with the exception of the John domain assumption. In order to get a higher-order approximation estimate or convergence rate, the self-similar structure of the boundaries of $D^\e$ at small scales must be taken into account. In this paper, we are focusing on randomly rough pipes under certain stationary and ergodic assumptions.

Let $\Omega^\e$ be the sample space of rough infinite cylinders $D^\e$ satisfying the thickness condition \eqref{def.De.pipelike} and the John domain condition in Definition \ref{def.John2} with uniform constants $(L,K)=(L,2)$. Two elements $D^\e$ and $\widetilde{D}^\e$ are regarded as the same element if and only if $D^\e = \widetilde{D}^\e$. 
Let $\mathcal{F}^\e$ be the maximal $\sigma$-algebra generated by the elements of $\Omega^\e$. We further assume that there exists a probability measure $\PP :\mathcal{F}^\e \to [0,1]$ satisfying the following property: $\PP $ is stationary with respect to a group of transformations consisting of $\theta$-rotations and $x_1$-translations, i.e., $\PP $ satisfies, for any $\theta \in [0, 2\pi)$ and $t \in \R$,
\begin{equation}\label{Stationarity3D}
\PP  \circ R_\theta = \PP  \circ T_t =  \PP ,
\end{equation}
where
\begin{equation}\label{Def.Trans+Rotation}
R_\theta 
=\begin{bmatrix}
1 & 0 & 0\\
0 & \cos \theta & -\sin \theta\\
0 & \sin \theta & \cos \theta 
\end{bmatrix},
\qquad
T_t D^\e = (t,0,0) + D^\e.
\end{equation}
Under the above assumptions, $(\Omega^\e, \mathcal{F}^\e, \PP)$ is a probability space. Roughly, the random elements $D^\e$, or their boundaries $\Gamma^\e$, can be viewed as homogeneous random fields on $\R \times \mathbb{S}^1 $ which are stationary with respect to $\theta$-rotations and $x_1$-translations. For the corresponding homogeneous random field for 2D channels, we refer to \cite{BasGer2008,Ger09}.

To establish a higher-order convergence rate that improves Theorem \ref{thma}, we need to assume certain quantitative ergodicity for random elements (fields) in $\Omega^\e$. Indeed, it is assumed in G\'{e}rard-Varet \cite{Ger09} that random rough boundaries have no correlation at large distances, interpreted as a version of finite range of dependence assumption, leading to almost sharp convergence rate under $L^2$ stochastic integrability. In this paper, inspired by stochastic homogenization theory recently advanced by Gloria-Otto (e.g. \cite{GloOtt17}), we introduce the functional inequalities on random rough boundaries, rather than on coefficients as is typically done in homogenization theory. Such quantitative ergodicity eventually yields sharp convergence rates with exponential or Gaussian stochastic integrability.

We consider two functional inequalities as displayed in Definition \ref{def.funct.ineq} below, in the similar spirit as \cite[Definition 2.2]{GloOtt17} for the first one and as \cite[Definition 2.1]{DueGlo20-2} for the second one. Let $X$ be a scalar or vector-valued random variable defined on the probability space $(\Omega^\e, \mathcal{F}^\e, \PP)$. Then we define the oscillation of a random variable $X$ with respect to $D^\e$ restricted to the cube $S_{a}(z)$ with radius $a \ge \e$ and center $z \in \Gamma := \Gamma^0$ by
\begin{equation}\label{def.osc}
\begin{aligned}
    & \Big(\underset{D^\e|_{S_{a}(z)}}{\osc} X\Big)(D^\e) \\
    & = \sup \bigg\{ |X(\widetilde{D}^\e) - X(\widehat{D}^\e)| ~\bigg|~ 
    \begin{array}{l}
    \widetilde{D}^\e, \widehat{D}^\e\in \Omega^\e, \\\partial\widetilde{D}^\e = \partial\widehat{D}^\e = \partial D^\e \text{ in } \R^3\setminus S_{a}(z)
    \end{array}
    \bigg\}.
\end{aligned}
\end{equation}

\begin{definition}\label{def.funct.ineq}
Let $(\Omega^\e,\mathcal{F}^\e, \PP)$ be the probability space defined above.
\begin{enumerate}[(1)]
\item
We say that $(\Omega^\e, \mathcal{F}^\e, \PP)$ satisfies the spectral gap inequality if there exists $C>0$ such that for any random variable $X$ on $(\Omega^\e,\mathcal{F}^\e, \PP)$, we have
\begin{equation}\tag{SG}\label{ineq.SG}
\var[X]
\le
C \E\bigg[ \e^{-2} \int_{\Gamma}
\big(
\underset{D^\e|_{S_{\e}(z)}}{\osc} X
\big)^2 
\dd \Gamma_z \bigg].
\end{equation}

\item
We say that $(\Omega^\e, \mathcal{F}^\e, \PP)$ satisfies the logarithmic Sobolev inequality if there exists $C>0$ such that for any random variable $X$ on $(\Omega^\e,\mathcal{F}^\e, \PP)$, we have
\begin{equation}\tag{LSI}\label{ineq.LSI}
\Ent[X]
\le
C \E\bigg[ \e^{-2} \int_{\Gamma}
\big(
\underset{D^\e|_{S_{\e}(z)}}{\osc} X
\big)^2 
\dd \Gamma_z \bigg],
\end{equation}
where
\begin{equation}
\Ent[X] = \E[X^2 \log X^2] - \E[X^2] \log \E[X^2].
\end{equation}
\end{enumerate}
\end{definition}

\begin{remark}\label{rem.def.funct.ineq}
    The functional inequalities in probability spaces are widely used in various applications, not limited to homogenization theory. Concrete examples satisfying \eqref{ineq.SG} or \eqref{ineq.LSI} will be constructed by Poisson point process or Bernoulli process in Appendix \ref{sec.ex}.
\end{remark}

Our second main theorem concerns the Navier's wall law, which provides a higher-order approximation for the solution $u^\e$ of \eqref{eq.NS}. In fact, by a formal calculation with the help of boundary layers (see Section \ref{outline} and Section \ref{sec.bl}) and under the stationarity assumption, the higher-order approximation of $(u^\e,p^\e)$ takes the form of
\begin{equation}\label{eq.uN.sol}
    u^{\rm N}
    = u^0 + \e \alpha u^1,
    \qquad
    p^{\rm N} 
    = p^0 + \e \alpha p^1,
\end{equation}
where $u^0$ is the Hagen-Poiseuille flow in \eqref{eq.HPflow1} and 
\begin{equation}\label{eq.u1p1}
    u^1 = -\frac{2\phi}{\pi}(1-2r^2)e_1
    \qquad
    p^1 = \frac{16\phi}{\pi} x_1.
\end{equation}
Here the parameter $\alpha$ is determined by the expectation of the boundary layers and therefore depends only on the probability space $(\Omega^\e, \mathcal{F}^\e, \PP)$ with the stationarity assumption; see Section \ref{NWL} for details. In particular, it reflects a physical property of the rough pipe and is independent of the flux $\phi$.

Then it is straightforward to verify that $(u^{\rm N},p^{\rm N})$ in \eqref{eq.uN.sol} satisfies the Navier-Stokes system in the smooth pipe $D^0$ with a Navier's slip condition on $\Gamma^0 = \partial D^0$
\begin{equation}\label{eq.uN}
    \left\{
    \begin{array}{ll}
    -\Delta u^{\rm N} + u^{\rm N}\cdot\nabla u^{\rm N} + \nabla p^{\rm N} = 0 &\mbox{in}\ D^0,\\
    \nabla\cdot u^{\rm N} = 0 &\mbox{in}\ D^0,\\
    \Phi^0[u^{\rm N}] = \phi, & \\
    (I-\nu\otimes \nu) u^{\rm N}
    = \lambda^\e (I-\nu\otimes \nu) \mathbb{D}(u^{\rm N}) \nu
    &\mbox{on}\ \Gamma^0,\\
    u^{\rm N}\cdot \nu = 0 &\mbox{on}\ \Gamma^0.
    \end{array}\right.
\end{equation}
Here $\nu$ is the inward normal vector on $\Gamma^0$ and $I-\nu\otimes \nu$ is the tangential projection. Moreover, 
\begin{equation*}
    \mathbb{D}(u) = \frac{1}{2}(\nabla u + (\nabla u)^T)
\end{equation*}
is the strain rate tensor. In \eqref{eq.uN}, the relationship between $\lambda^\e$ and $\alpha$ is given by
\begin{equation}\label{eq.sliplength-alpha}
    \lambda^\e = \frac{\alpha \e }{1 - 2 \alpha \e}.
\end{equation}
The system \eqref{eq.uN} will be called the \emph{effective} Navier-Stokes system. The last two equations in \eqref{eq.uN} on the boundary $\Gamma^0$ is called the Navier's slip condition. The fourth equation in \eqref{eq.uN} says that the tangential velocity is proportional to the tangential stress on the boundary, where the constant $\lambda^\e$ is called the slip length and $1/\lambda^\e$ is called the friction factor. The last equation in \eqref{eq.uN} is the non-penetration condition. The Navier's slip condition recovers the no-slip or full-slip condition for $\lambda^\e = 0$ or $\lambda^\e = \infty$, respectively. Observe that the parameters $\phi,\lambda^\e$ uniquely determine the effective Navier-Stokes system. Moreover, the solution \eqref{eq.uN.sol} is unique in $L^2_{\rm uloc}(D^0)^3$ if one imposes the smallness of $\phi,\e$.

The theorem below verifies the Navier's wall law rigorously under functional inequalities and establishes sharp convergence rates. Throughout this paper, we let, for any $x\in D^0$, 
\[
    \delta = \delta(x) := {\rm dist}(x, \Gamma^0) = 1-r_x.
\]
\begin{theorema}\label{thmb}
    Let the probability space $(\Omega^\e, \mathcal{F}^\e, \PP)$ satisfy the stationary assumption. There exists sufficiently small $\phi_0>0$ such that for all $|\phi|<\phi_0$, the solution $u^\e$ of \eqref{eq.NS} in $H^1_{\rm uloc}(D^\e)^3$ and the solution $u^{\rm N}$ of \eqref{eq.uN} given by \eqref{eq.uN.sol} satisfy the following:
    \begin{enumerate}[(1)]
    \item\label{item1.thmb}
    Under the spectral gap inequality \eqref{ineq.SG}, we have
    \begin{equation}\label{est.ue-uN.SG}
        \sup_{k\in \R} \E \Big[ \exp \Big( \frac{c \|u^\e - u^{{\rm N}} \|_{L^2(D^0_{k,k+1})}}{|\phi| \e^{3/2}}\Big) \Big] 
    \le C,
    \end{equation}
    and
    \begin{equation}\label{est.Due-DuN.SG}
        \sup_{k\in \R} \E \Big[ \exp \Big( \frac{c \|\delta (\nabla u^\e - \nabla u^{{\rm N}} )\|_{L^2(D^0_{k,k+1})}}{|\phi| \e^{3/2}}\Big) \Big] 
    \le C.
    \end{equation}
    \item\label{item2.thmb}
    Under the logarithmic Sobolev inequality \eqref{ineq.LSI}, we have
    \begin{equation}\label{est.ue-uN.LSI}
        \sup_{k\in \R} \E \Big[ \exp \Big( \frac{c \|u^\e - u^{{\rm N}} \|^2_{L^2(D^0_{k,k+1}) }}{|\phi|^2 \e^{3}}\Big) \Big] 
    \le C,
    \end{equation}
    and
    \begin{equation}\label{est.Due-DuN.LSI}
        \sup_{k\in \R} \E \Big[ \exp \Big( \frac{c \|\delta (\nabla u^\e - \nabla u^{{\rm N}}) \|^2_{L^2(D^0_{k,k+1})}}{|\phi|^2 \e^{3}}\Big) \Big] 
    \le C.
    \end{equation}
    \end{enumerate}
    The constants $c, C>0$ are independent of $\phi,\e$.
\end{theorema}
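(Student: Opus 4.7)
The plan is to combine a deterministic multi-scale approximation based on boundary-layer correctors with a probabilistic concentration argument driven by the functional inequalities \eqref{ineq.SG} and \eqref{ineq.LSI}. Theorem \ref{thma} already furnishes the crude $O(\e)$ bound; the extra $\e^{1/2}$ captured in Theorem \ref{thmb} encodes the averaged slip induced by $\Gamma^\e$, and the exponential vs.\ Gaussian integrability reflects the standard dichotomy between spectral gap and log-Sobolev in the Herbst framework.

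First, for each realization $D^\e$ I would construct a boundary-layer corrector $v^\e$ by solving an auxiliary Stokes (or linearized Navier-Stokes) problem in $D^\e$ whose Dirichlet data on $\Gamma^\e$ cancels the trace of the tangential part of $u^0$. Large-scale regularity over John boundaries together with Saint-Venant's principle along the pipe force $v^\e$ to stabilize, away from $\Gamma^\e$, to a tangential shift $\e\alpha^\e e_1$, with $\alpha^\e$ a random slip length; the stationarity \eqref{Stationarity3D} renders $\alpha:=\E[\alpha^\e]$ independent of the base point and identifies it with the slip length in \eqref{eq.sliplength-alpha}. The two-scale ansatz $u^{\mathrm{app}} = u^0 + v^\e$, corrected by a small lift restoring divergence, flux and smooth-boundary matching, then satisfies the perturbed Navier-Stokes system with residuals of order $\e^{3/2}$ in $L^2_{\mathrm{uloc}}$. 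The comparison $u^\e - u^{\mathrm{app}}$ is closed by Saint-Venant's principle for pipe flows coupled with the large-scale regularity, the smallness of $\phi$ absorbing the nonlinear terms through a fixed-point iteration.

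The remaining part $u^{\mathrm{app}} - u^{\mathrm{N}}$ is essentially $v^\e - \e\alpha u^1$, a mean-zero random fluctuation whose concentration is the heart of the matter. Setting $X_k := \|u^\e - u^{\mathrm{N}}\|_{L^2(D^0_{k,k+1})}$, I would estimate the one-cell oscillation $\bigl(\osc_{D^\e|_{S_\e(z)}} X_k\bigr)(D^\e)$ by comparing two Navier-Stokes solutions in pipes that differ only inside $S_\e(z)$; a localized energy argument combined with Saint-Venant decay in $x_1$ produces a per-cell contribution of size $\e^{7/2}$ times a factor decaying as $|z_1 - k|$ grows. Integrating over $z\in\Gamma$ via \eqref{ineq.SG} yields the variance bound $\var[X_k] \lesssim |\phi|^2 \e^3$, which, when combined with the deterministic mean estimate $\E[X_k] \lesssim |\phi| \e^{3/2}$, is upgraded to the exponential tail \eqref{est.ue-uN.SG} through the Aida--Stroock moment iteration. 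Under the stronger \eqref{ineq.LSI}, a direct Herbst argument applied to $X_k$ gives the sub-Gaussian bound \eqref{est.ue-uN.LSI}. The weighted gradient estimates \eqref{est.Due-DuN.SG} and \eqref{est.Due-DuN.LSI} proceed in parallel, the weight $\delta$ compensating for the non-integrable boundary behavior of $\nabla v^\e$ at $\Gamma^0$.

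The hard part will be the per-cell oscillation estimate. Given a single-cell perturbation of $\Gamma^\e$ inside $S_\e(z)$, one must show that its influence on a distant slab $D^0_{k,k+1}$ integrates to exactly $\e^{7/2}$: large-scale Lipschitz-type regularity is needed to localize the response at scale $\e$, Saint-Venant's principle to transport and damp it along the pipe, and careful bookkeeping through the nonlinear iteration to prevent the quadratic term from polluting the rate; all of this must be performed in the John-type (hence non-Lipschitz) geometry. The weighted variant for the gradient is yet more delicate, because the singularity $|\nabla u^0|$ creates at $\Gamma^0$ interacts non-trivially with the oscillation bound and forces us to track the weight $\delta$ throughout the concentration step.
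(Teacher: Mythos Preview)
Your strategy is largely right---boundary-layer corrector, two-scale ansatz, Saint-Venant localization, then concentration via \eqref{ineq.SG}/\eqref{ineq.LSI}---but there is one structural choice where you diverge from the paper, and it makes your life harder than necessary.

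You propose to apply the functional inequalities directly to $X_k = \|u^\e - u^{\rm N}\|_{L^2(D^0_{k,k+1})}$, which forces you to estimate the single-cell oscillation of the \emph{nonlinear} Navier--Stokes solution under a boundary perturbation. The paper instead inserts an intermediate deterministic reduction (Lemma~\ref{lema}): it bounds $\|u^\e - u^{\rm N}\|_{L^2(D^0_{k,k+1})}$ pointwise in $\omega$ by $C|\phi|\bigl(\|v^\e_{\rm bl}-\E[v^\e_{\rm bl}]\|_{L^2(D^0_{k,k+1})}+\e^{3/2}\bigr)$ plus an exponentially weighted integral of the same quantity over all slabs. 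This is obtained by viewing $w^\e=u^\e-u^\e_{\rm app}$ as the solution of a linearized system around $u^\e_{\rm app}$ whose source $G^\e$ is built from $v^\e_{\rm bl}-\E[v^\e_{\rm bl}]$ (the laminar cancellation $\E[u^\e_{\rm app}]\cdot\nabla\E[u^\e_{\rm app}]=0$ is the key algebraic point), and then invoking the Saint-Venant localization estimate \eqref{est.MStokes.Localization}. Once Lemma~\ref{lema} is in hand, the functional inequalities are applied \emph{only} to the linear Stokes boundary layer $v^\e_{\rm bl}$: the decay-of-correlation estimate (Proposition~\ref{prop.correlation}) compares two solutions of the linear system \eqref{intro.eq.1stBL} in domains differing only in $S_\e(z)$, and is carried out by a direct Green's-function representation, yielding the pointwise bound $|v^\e_{\rm bl}(x)-\tilde v^\e_{\rm bl}(x)|\le C\e^3(1-r_x)e^{-c|x_1-z_1|}|x-z|^{-3}$. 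Integrating over a slab gives $\osc_{D^\e|_{S_\e(z)}}\bigl[\|v^\e_{\rm bl}-\E[v^\e_{\rm bl}]\|_{L^2(D'_{0,1})}\bigr]\le C\e^{5/2}e^{-c|z|}$ (so $\e^{5/2}$, not your $\e^{7/2}$), whence $\var\lesssim\e^3$ under \eqref{ineq.SG} and the rest follows from the $p$-moment iteration / Herbst argument as you outline.

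The gain of the paper's ordering is that the nonlinearity is disposed of \emph{once}, deterministically, in Lemma~\ref{lema}; the sensitivity analysis never sees the quadratic term. Your route requires a per-cell oscillation estimate for the full Navier--Stokes solution, which in principle works (the nonlinearity is perturbative for small $\phi$) but forces you to thread the small-flux fixed-point iteration through the comparison of two randomly perturbed solutions---precisely the ``careful bookkeeping'' you flag as the hard part. The paper's decoupling sidesteps this entirely and also makes the weighted-gradient version \eqref{1st.err-2} no harder than \eqref{1st.err}.
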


\begin{remark}\label{rem.thmb}
    The solution $u^{\rm N}$ of the effective Navier-Stokes system \eqref{eq.uN} is actually defined only by using the stationarity assumption. However, this ``predicted" approximation of $u^\e$ may not provide better convergence rates than the ones in Theorem \ref{thma} if no ergodicity assumptions are made on the probability space. The general validity of the Navier's wall law cannot be expected due to the counterexamples in \cite[Section 3]{GerMas2010}. 
\end{remark}

Let us briefly discuss the novelty of Theorem \ref{thma} and Theorem \ref{thmb}. First, compared with the previous work \cite{BasGer2008,Ger09}, these two theorems generalize the results from 2D channels to 3D pipes with the John domain assumption and establish the Navier's wall law on the cylinder. We point out that the approaches in \cite{BasGer2008,GerMas2010} are not applicable to our domains with extreme roughness, which is explained more in detail in Section \ref{outline}.
Second, in Theorem \ref{thmb}, we establish the optimal convergence rates $O(\e^{3/2})$ (see Remark \ref{rmk.optimality}) with exponential or Gaussian stochastic integrability under \eqref{ineq.SG} or \eqref{ineq.LSI} respectively, including the new convergence rate for gradient. While in \cite{Ger09}, the near-optimal convergence rate $O(\e^{3/2}|\log \e|^{1/2})$ was obtained with $L^2$ stochastic integrability under the assumption of finite range of dependence. It remains to be an interesting question whether one can improve the stochastic integrability with the assumption of finite range of dependence, just as in stochastic homogenization for elliptic equations \cite{AS16,AKM19,AK2022}.

Our last main theorem is an approximate Poiseuille's law, illustrating the relationship between the flux, the pressure drop, and the slip length (or fraction factor) appearing in the effective system \eqref{eq.uN}. Let us recall from e.g. \cite{SS93, MOHR12} that the well-known Poiseuille's law in a circular pipe for laminar flows under no-slip condition is 
    \begin{equation}
        \Delta_\ell p := p(0) - p(\ell e_1) = \frac{8\mu \ell \phi}{\pi R^4},
    \end{equation}
where $\Delta_\ell p$ is the so-called pressure drop,  $\ell$ is the distance at which the pressure drop is measured, $\mu$ is the dynamic viscosity, $\phi$ is the flux and $R$ is the radius of the pipe. Even in rough domains, the above Poiseuille's law remains accurate up to an error of $O(\e)$ thanks to Theorem \ref{thma}. Hence, in a manner akin to the derivation of the Navier's wall law, it is reasonable to ask if there is a more accurate Poiseuille's law. A further question for this problem involves practical calculation of the slip length $\lambda^\e$ in the effective system \eqref{eq.uN}. Indeed, the slip length $\lambda^\e$, determined by the expectation of the boundary layers (see Section \ref{NWL}), plays the same role as the homogenized coefficients in homogenization theory \cite{JKO1994,AK2022}, whose computation in practice from a realization of random coefficients is a classical challenge; see e.g. \cite{mou19, Fis19, CJOX24} for some recent developments. As far as the authors are aware, a robust algorithm for computing the slip length in the presence of random boundaries has not been developed in the previous mathematical studies.

    The following theorem answers the above two questions.

    \begin{theorema}[Poiseuille's law for rough pipes] \label{thmc}
        Under the same assumptions as Theorem \ref{thmb}, for any $\ell \in (0,1]$, we have:
        \begin{enumerate}[(1)]
            \item Under the spectral gap inequality \eqref{ineq.SG}, we have
            \begin{equation}\label{est.D1p.SG}
            \E \Big[\exp \Big(  \frac{c |\Delta_\ell p^\e - (8\phi \ell/\pi)(1 - 2\e \alpha)| }{|\phi| \e^{3/2}}  \Big) \Big] \le C.
        \end{equation}
        \item Under the logarithmic Sobolev inequality \eqref{ineq.LSI}, we have
        \begin{equation}\label{est.D1p.LSI}
            \E \Big[\exp \Big(  \frac{c |\Delta_\ell p^\e - (8\phi \ell/\pi)(1 - 2\e \alpha)|^2 }{|\phi|^2 \e^{3}}  \Big) \Big] \le C.
        \end{equation}
        \end{enumerate}  
        The constants $c, C>0$ are independent of $\phi,\e$.
    \end{theorema}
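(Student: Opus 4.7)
The plan is to reduce the theorem to an estimate on $|\Delta_\ell(p^\e - p^{\rm N})|$, express this as a line integral of the $e_1$-component of the momentum equation along the axis of the pipe, and conclude via interior regularity combined with Theorem \ref{thmb}. A direct computation from $p^{\rm N} = p^0 + \e\alpha p^1$ in \eqref{eq.uN.sol}--\eqref{eq.u1p1} gives $\Delta_\ell p^{\rm N} = (8\phi\ell/\pi)(1-2\e\alpha)$, so the theorem indeed reduces to bounding $|\Delta_\ell(p^\e - p^{\rm N})|$. Since $(0,0,0)$ and $\ell e_1$ lie on the axis of $D^0 \subset D^\e$, hence well inside both domains, pointwise values of $p^\e$ and $p^{\rm N}$ there are meaningful by interior regularity. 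Subtracting the $e_1$-components of the momentum equations in \eqref{eq.NS} and \eqref{eq.uN} at axis points and integrating over $x_1 \in [0, \ell]$ yields, with $w = u^\e - u^{\rm N}$,
\begin{equation*}
\Delta_\ell(p^\e - p^{\rm N}) = -\int_0^\ell \Bigl[\Delta w_1 - (w\cdot\nabla u^\e)_1 - (u^{\rm N}\cdot\nabla w)_1\Bigr]_{x = (x_1, 0, 0)} \dd x_1,
\end{equation*}
where the convective difference has been linearized.

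The main step is to bound the integrand pointwise on the axis by a local $L^2$ norm of $w$. The pair $(w, p^\e - p^{\rm N})$ satisfies a perturbed Stokes system in $D^0$ with source $-w\cdot\nabla u^\e - u^{\rm N}\cdot\nabla w$. Since $\|u^\e\|_{L^\infty}, \|u^{\rm N}\|_{L^\infty} \le C|\phi|$ on the inner tube $\{r \le 3/4\}$ (by bootstrapping Theorem \ref{thma} for $u^\e$, and from the explicit formula for $u^{\rm N}$), interior Schauder/Sobolev estimates together with an absorption argument valid for $|\phi|$ small give, uniformly in $x_1 \in [0, \ell]$,
\begin{equation*}
\bigl(|\nabla^2 w| + |\phi|\,|\nabla w| + |\phi|\,|w|\bigr)(x_1, 0, 0) \le C \sum_{|k| \le 1}\|w\|_{L^2(D^0_{k-1, k+2})},
\end{equation*}
with $C$ independent of $\phi$ and $\e$. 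The Laplacian term supplies the dominant contribution of order $|\phi|\e^{3/2}$; the convective-difference terms yield only $|\phi|^2 \e^{3/2}$ and are subcritical. Since $\ell \in (0, 1]$, integrating over the axis segment costs only a bounded factor.

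It then remains to extract the stochastic integrability. Estimates \eqref{est.ue-uN.SG} and \eqref{est.ue-uN.LSI} of Theorem \ref{thmb} provide exponential (resp.~Gaussian) integrability of the unweighted norm $\|w\|_{L^2(D^0_{k, k+1})}$ at rate $|\phi|\e^{3/2}$ for each fixed $k$. Since a finite sum of sub-exponential (resp.~sub-Gaussian) random variables retains the same tail type with a comparable Orlicz norm, this yields \eqref{est.D1p.SG} and \eqref{est.D1p.LSI} respectively. The main obstacle is the pointwise interior regularity step: one must bootstrap from $L^2$ to $C^2$ estimates for the perturbed Stokes system while absorbing the Navier--Stokes nonlinearity with constants uniform in $\e$, for which the smallness of $\phi$ assumed throughout is essential.
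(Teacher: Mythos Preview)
Your proposal is correct and follows the same overall architecture as the paper's proof: reduce to bounding $|\Delta_\ell(p^\e-p^{\rm N})|$, control this by interior regularity applied to $w=u^\e-u^{\rm N}$, and then invoke Theorem~\ref{thmb}. The execution differs in two respects worth noting. First, the paper does not integrate the momentum equation along the axis; instead it applies the pressure oscillation estimate (Lemma~\ref{lem.Pressure.osc}) on a chain of overlapping balls $B_{1/4}(te_1)$, $B_{1/4}((t+\tfrac12)e_1)$, $B_{1/4}((t+1)e_1)$ to bound $|\Delta_\ell\pi^\e|$ directly by $\|w\|_{H^1}$ on the half-radius tube. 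This packages the interior bootstrap into a single black-box lemma and avoids having to reach $C^2$ regularity of $w$. Second, because the paper only controls $\|w\|_{H^1}$ on the inner tube $D^0_{1/2}$, where $\delta\ge\tfrac12$, it uses \emph{both} halves of Theorem~\ref{thmb}: the unweighted $L^2$ estimate \eqref{est.ue-uN.SG}/\eqref{est.ue-uN.LSI} and the weighted gradient estimate \eqref{est.Due-DuN.SG}/\eqref{est.Due-DuN.LSI}. Your route, by bootstrapping all the way to pointwise $C^2$ bounds from the local $L^2$ norm of $w$, needs only the unweighted estimates from Theorem~\ref{thmb}; the price is a longer regularity argument, but it shows that the weighted gradient estimates are not strictly necessary for Theorem~\ref{thmc}.
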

    
    The Poiseuille's law for a rough pipe in Theorem \ref{thmc} together with \eqref{eq.sliplength-alpha} relates the slip length to the pressure drop and flux for a laminar flow under consideration. To the authors' best knowledge, Theorem \ref{thmc} is new and provides a practical and robust method to experimentally measure the slip length $\lambda^\e$ from the pressure drop and flux.

    \subsection{Outline of proofs}\label{outline}

A novel and systematic approach is developed in this paper to study the wall laws, combining several concepts originating from the recent progress in stochastic homogenization theory. Our proofs fundamentally depend on the quantitative analysis of boundary layers which are understood as correctors in the context of homogenization theory. Precisely, the boundary layers are the solutions of linear Stokes system with an oscillating boundary data
\begin{equation}\label{intro.eq.1stBL}
\left\{
\begin{array}{ll}
-\Delta v^\e_{\rm bl}+\nabla \pi^\e_{\rm bl} =0&\mbox{in}\ D^\e, \\
\nabla\cdot v^\e_{\rm bl}=0&\mbox{in}\ D^\e, \\
v^\e_{\rm bl} = (1-r^2) e_1 &\mbox{on}\ \Gamma^\e.
\end{array}\right.
\end{equation}
Notice that the solution to this system is not unique as the flux is allowed to be arbitrary. For our application, we ``normalize" the boundary layers through
\begin{equation}\label{eq.fluxcorrector}
    \Phi^\e[(1-r^2)e_1 - v^\e_{\rm bl}] = \frac{\pi}{2}.
\end{equation}
The normalized boundary layer is carefully constructed so that the approximation $u^\e_{\rm app}$ in \eqref{Intro.uapp} below has favorable properties in the proofs. It is emphasized that our boundary layers are still defined in the whole domain $D^\e$ without any rescaling due to the geometry of $D^\e$. This is quite different from the boundary layers in 2D channels considered by \cite{BasGer2008,Ger09,GerMas2010}, which are usually defined in upper half-planes after rescaling. 

Boundary layers are commonly employed as correctors used to eliminate discrepancies caused by irregular boundaries. In the system \eqref{intro.eq.1stBL}, the boundary data is of $O(\e)$ on $\Gamma^\e$ due to the condition  \eqref{def.De.pipelike} and does not decay along the $x_1$-axis. Thus the solutions of \eqref{intro.eq.1stBL} are supposed to have size of $O(\e)$ locally and do not decay as $x_1$ goes to infinity.
Under such constraints, obtaining the solutions of \eqref{intro.eq.1stBL} is a challenging problem due to the unboundedness and the extreme roughness of the domain $D^\e$. 

Actually, the existence of boundary layers is one of the major questions that has been addressed in \cite{GerMas2010} where the well-posedness of boundary layer systems over bumpy half-space was established for arbitrary Lipschitz boundaries. This Lipschitz regularity assumption was recently relaxed to the John domains in \cite{HPZ-21} by using the large-scale Lipschitz estimate (also see \cite{KP15,KP18,HP20,Z21,HZ23} for closely related developments), which is a key idea originating from homogenization theory. This paper adopts the approach developed in \cite{HPZ-21} to construct and estimate the boundary layers in rough pipes of John type.

The essential tool to tackle the difficulties in solving \eqref{intro.eq.1stBL} is the Green's function for the Stokes system. The Green's function method is advantageous due to its broad applicability; however, it requires detailed and precise analysis when one deals with quantitative estimates. We thus combine the Saint-Venant's principle and the large-scale Lipschitz estimate near the rough boundary at a scale no less than $\e$. This combination enables us obtain both exponential decay of the Green's function in the direction of axis and the sharp large-scale estimates. These two results are crucial for constructing the solution of \eqref{intro.eq.1stBL} and for deriving its deterministic size estimates in Proposition \ref{prop.ubl}, as well as for showing decay of correlation for locally perturbed boundaries in Proposition \ref{prop.correlation}. Let us reiterate that our approach of construction and estimates is established for the boundaries of John type which could be fractals, and can be readily adapted to rough domains with different geometry.

Both in the proofs of Theorem \ref{thma} and Theorem \ref{thmb}, an approximation of the solution $u^\e$
\begin{equation}\label{Intro.uapp}
    u^\e_{\rm app}=u^0 - \frac{2\phi}{\pi} v^\e_{\rm bl}
\end{equation}
will be used, where $v^\e_{\rm bl}$ is the boundary layer determined by \eqref{intro.eq.1stBL}--\eqref{eq.fluxcorrector}. Notice that
\[
\Phi^\e[u^\e_{\rm app}] = \Phi^\e[u^\e] = \phi,\qquad
u^\e_{\rm app} = u^\e = 0 \quad \mbox{on} \mkern9mu \Gamma^\e.
\]
Hence, given the size estimates of the boundary layers, $u^\e_{\rm app}$ is naturally expected to be more accurate than $u^0$ in \eqref{eq.HPflow1} as an approximation of $u^\e$ in $D^\e$, sufficiently capturing the highly oscillating structure of $u^\e$ near the rough boundary $\Gamma^\e$ due to the no-slip condition. This insight is indeed correct, and moreover, the approximation $u^\e_{\rm app}$ is so well-constructed that Theorem \ref{thma} follows just by energy estimate and the size estimates of the boundary layers. 

Remark that the proof of Theorem \ref{thma} is purely deterministic and the random structure of boundaries does not play any role. However, to prove Theorem \ref{thmb}, 
it is necessary to take advantage of the quantitative ergodicity assumptions. The usefulness of ergodic assumptions is guaranteed by the expectation of the approximation $u^\e_{\rm app}$ under the stationarity assumption: 
\begin{equation}\label{Intro.asym}
    \E[u^\e_{\rm app}] = u^0 - \frac{2\phi}{\pi} \E[v^\e_{\rm bl}] = u^{{\rm N}} + O(\e^2)
    \quad \text{in} \mkern9mu L^2_{\rm uloc}(D^0)^3,
\end{equation}
where $u^{{\rm N}}$ is defined in \eqref{eq.uN.sol} and the order $O(\e^2)$ can be verified by asymptotic analysis of $\E[v^\e_{\rm bl}]$ for $\e\ll 1$; see Section \ref{NWL} for details. The relation \eqref{Intro.asym} suggests that one may convert the concentration inequality of the boundary layers to that of the solutions $u^\e$ of \eqref{eq.NS}. This is actually the case thanks to the following technical lemma of independent interest, which is proved under the minimal assumption on the probability space $(\Omega^\e, \mathcal{F}^\e, \PP)$. 
\begin{lemma}\label{lema}
    Let the probability space $(\Omega^\e, \mathcal{F}^\e, \PP)$ satisfy the stationary assumption. There exists sufficiently small $\phi_0>0$ such that for all $|\phi|<\phi_0$, the solution $u^\e$ of \eqref{eq.NS} in $H^1_{\rm uloc}(D^\e)^3$ and the solution $u^{\rm N}$ of \eqref{eq.uN} given by \eqref{eq.uN.sol} satisfy the following: for all $k\in\R$,
    \begin{align}\label{1st.err}
    \begin{split}
    \|u^\e - u^{\rm N}\|_{L^2(D^0_{k,k+1})} & \le
    C_0|\phi| ( \|v^\e_{\rm bl} - \E[v^\e_{\rm bl}] \|_{L^2(D^0_{k,k+1})}
    + \e^{3/2})\\
    &\quad
    + C_0|\phi|^2 
    \int_{\R}
    \|v^\e_{\rm bl} - \E[v^\e_{\rm bl}] \|_{L^2(D^0_{s,s+1})}  e^{-c |s-k|} \dd s,
    \end{split}
    \end{align}
    and
    \begin{align}\label{1st.err-2}
    \begin{split}
    \|\delta(\nabla u^\e - \nabla u^{\rm N})\|_{L^2(D^0_{k,k+1})} 
    &\le
    C_0|\phi| ( \|\delta(\nabla v^\e_{\rm bl} - \nabla \E[v^\e_{\rm bl}]) \|_{L^2(D^0_{k,k+1})}
    + \e^{3/2})\\
    &\quad
    + C_0|\phi|^2 
    \int_{\R}
    \|v^\e_{\rm bl} - \E[v^\e_{\rm bl}] \|_{L^2(D^0_{s,s+1})}  e^{-c |s-k|} \dd s.
    \end{split}
    \end{align}
    The constants $c, C_0>0$ are independent of $\phi,\e,k$.
\end{lemma}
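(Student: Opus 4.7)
My plan is to introduce the corrector-based approximation
$u^\e_{\rm app} := u^0 - \frac{2\phi}{\pi} v^\e_{\rm bl}$
and decompose
\[
u^\e - u^{\rm N} = (u^\e - u^\e_{\rm app}) + (u^\e_{\rm app} - u^{\rm N}).
\]
I would handle the second summand directly. Using $u^{\rm N} = u^0 - \frac{2\phi\e\alpha}{\pi}(1-2r^2)e_1$ together with the forthcoming asymptotic expansion $\E[v^\e_{\rm bl}] = \e\alpha(1-2r^2)e_1 + O(\e^2)$ in $L^2_{\rm uloc}(D^0)^3$ (to be derived in Section \ref{NWL} via boundary-layer analysis), one computes
\[
u^\e_{\rm app} - u^{\rm N} = -\tfrac{2\phi}{\pi}\bigl(v^\e_{\rm bl} - \E[v^\e_{\rm bl}]\bigr) + O(|\phi|\e^2)
\]
in $L^2(D^0_{k,k+1})$, and the $O(|\phi|\e^2)$ term is absorbed into $C|\phi|\e^{3/2}$.

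For the main piece $w := u^\e - u^\e_{\rm app}$, I observe that $u^\e_{\rm app}$ solves only the \emph{linear} Stokes system in $D^\e$ (since $u^0\cdot\nabla u^0 = 0$ because $u^0$ depends only on $r$, and $v^\e_{\rm bl}$ is a Stokes solution), hence $w$ satisfies a perturbed Navier-Stokes system
\[
-\Delta w + (u^\e_{\rm app}+w)\cdot\nabla w + w\cdot\nabla u^\e_{\rm app} + \nabla\tilde p = -u^\e_{\rm app}\cdot\nabla u^\e_{\rm app}
\]
together with $\nabla\cdot w = 0$, $w|_{\Gamma^\e}=0$ and $\Phi^\e[w]=0$. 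For $|\phi|<\phi_0$ small, the linear and quadratic $w$-terms on the left are absorbable, and the Saint-Venant's principle for Navier-Stokes in pipes (Section \ref{SVP}) yields
\[
\|w\|_{L^2(D^0_{k,k+1})} \le C\int_\R \|u^\e_{\rm app}\cdot\nabla u^\e_{\rm app}\|_{H^{-1}(D^\e_{s,s+1})}\, e^{-c|s-k|}\,ds.
\]

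The decisive step will be to split the source via $v^\e_{\rm bl} = \E[v^\e_{\rm bl}] + \tilde v$ with $\tilde v := v^\e_{\rm bl}-\E[v^\e_{\rm bl}]$, and exploit the algebraic identities $u^0\cdot\nabla \bar v^{(0)} = \bar v^{(0)}\cdot\nabla u^0 = \bar v^{(0)}\cdot\nabla \bar v^{(0)} = 0$, where $\bar v^{(0)} := \e\alpha(1-2r^2)e_1$; these hold at once because both $u^0$ and $\bar v^{(0)}$ lie along $e_1$ and depend only on $r$. Consequently the deterministic part of the source reduces to $O(|\phi|^2\e^2)$ from the residual $\E[v^\e_{\rm bl}]-\bar v^{(0)}$ and is again absorbed into $C|\phi|\e^{3/2}$. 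The fluctuating source is dominated by $-\frac{2\phi}{\pi}[u^0\cdot\nabla\tilde v + \tilde v\cdot\nabla u^0]$; integration by parts (using $\nabla\cdot u^0=0$) combined with $\|u^0\|_{L^\infty} + \|\nabla u^0\|_{L^\infty}\le C|\phi|$ yields the $H^{-1}$ bound $C|\phi|^2\|\tilde v\|_{L^2(D^0_{s,s+1})}$, producing the claimed $C_0|\phi|^2$ convolution term. Adding this to the bound for $u^\e_{\rm app}-u^{\rm N}$ delivers \eqref{1st.err}.

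The weighted gradient estimate \eqref{1st.err-2} follows by the same decomposition, with the $L^2$ bound on $w$ upgraded through a Caccioppoli-type interior estimate featuring the wall-distance weight $\delta$, and with $\delta\nabla(u^\e_{\rm app}-u^{\rm N})$ controlled analogously using the asymptotic expansion of $\nabla\E[v^\e_{\rm bl}]$. The hard part will be verifying the Saint-Venant estimate with the $H^{-1}$ source norm on the rough John cross-sections (which calls upon the large-scale regularity from Section \ref{sec.bl}) and sharpening the asymptotic expansion of $\E[v^\e_{\rm bl}]$ to order $O(\e^2)$ in $L^2_{\rm uloc}(D^0)^3$; together with the cancellation above, this is precisely what turns the naive $|\phi|\e$ deterministic error of Theorem \ref{thma} into the $|\phi|\e^{3/2}$ rate.
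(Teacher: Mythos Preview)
Your approach is correct and essentially the same as the paper's: the decomposition through $u^\e_{\rm app}$, the laminar cancellation to kill the deterministic part of the source, and the Saint-Venant exponential localization are exactly the ingredients used. Two places where the paper proceeds slightly more cleanly are worth noting. First, stationarity forces $\E[v^\e_{\rm bl}]$ to be \emph{exactly} of the form $\e(\alpha+\beta r^2)e_1$ (see \eqref{eq.Evbl}), so $\E[u^\e_{\rm app}]$ is exactly laminar and the cancellation $\E[u^\e_{\rm app}]\cdot\nabla\E[u^\e_{\rm app}]=0$ holds with no residual; your detour through $\bar v^{(0)}$ and its $O(\e^2)$ error is unnecessary at this stage. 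Second, rather than keeping $w\cdot\nabla w$ on the left to be ``absorbed'', the paper first invokes the crude bound $\|\nabla w^\e\|_{L^2_{\rm uloc}}\lesssim|\phi|^2\e$ from Proposition~\ref{prop.ueapp.err1} to move $-w^\e\otimes w^\e$ into the divergence-form source (contributing $|\phi|^4\e^2$), and then applies the localization estimate \eqref{est.MStokes.Localization} for the \emph{linearized} system around $u^\e_{\rm app}$; your ``absorbable'' is shorthand for precisely this two-step bootstrap.
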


Observe that the estimates themselves in Lemma \ref{lema} are of deterministic feature and the functional inequalities are not used at all. The advantage of this lemma is that it allow us to reduce the convergence rate of the solutions of the nonlinear system \eqref{eq.NS} to the localized convergence rate of boundary layers, independent of the ergodicity condition imposed on the probability space $(\Omega^\e, \mathcal{F}^\e, \PP)$. We also mention that the exponential localization for the boundary layers in \eqref{1st.err}--\eqref{1st.err-2} is due to the Saint-Venant's principle.

Theorem \ref{thmb} is a consequence of Lemma \ref{lema} together with the concentration inequality of the boundary layers $v^\e_{\rm bl}$ in Proposition \ref{prop.flu.vbl}, where the functional inequalities \eqref{ineq.SG} and \eqref{ineq.LSI} in Definition \ref{def.funct.ineq} are applied to $v^\e_{\rm bl}$ viewed as a random variable. Similar to the approach in \cite{Ger09}, the decay of correlation of the boundary layers is pivotal in the procedure.

Finally, Theorem \ref{thmc} follows from Theorem \ref{thmb}, particularly the interior gradient convergence rate, and a standard oscillation estimate of pressure.

This paper is organized as follows. In Section \ref{sec.est}, we collect useful estimates in this paper. In Section \ref{sec.bl}, we construct and estimate the boundary layers. Additionally, we discuss the Navier's wall law and establish the concentration inequalities of boundary layers based on the functional inequalities in Definition \ref{def.funct.ineq}. In Section \ref{sec.prf}, we prove the main results, Theorems \ref{thma}, \ref{thmb} and \ref{thmc}.

\smallskip

\noindent
{\bf Notation.}\
In this paper, all the constants $C$ will not depend $\e$ nor $\phi$ and may change from lines to lines.. Some constants may depend on the parameter $(L,K)=(L,2)$ of John domains without mentioning. We write $A\lesssim B$ if there is some constant $C$ such that $A\le CB$. We also write $A\approx B$ if $A \lesssim B\lesssim A$. We denote the average integral over a measurable set $E$ by $\dashint_E f = |E|^{-1} \int_E f$.

    \section{Useful estimates}\label{sec.est}

   This section gathers and establishes useful estimates throughout this paper. The estimates based on the Saint-Venant's principle are provided for the Stokes and Navier-Stokes systems in Section \ref{SVP}, and the large-scale boundary regularity estimates are recalled from \cite{HPZ-21} for the Stokes system in Section \ref{LSLipschitz}. Using these estimates, we establish the detailed quantitative estimates for the Green’s function for the Stokes system in Section \ref{Green}.

    \subsection{Saint-Venant's principle}\label{SVP}

The Saint-Venant's principle, as applied to our current situation, states that in the case of vanishing flux, the Stokes flow exhibiting mild growth at infinity must exponentially decay. This classical property has been well-understood in straight cylinders and more general smooth unbounded domains with outlets at infinity, as documented in the monograph \cite{Gal2011book}. In this section, we will extend the Saint-Venant's principle to the bumpy John domain $D^\e$ and present the proofs that highlights how to effectively handle the boundary roughness of John type. It is crucial for the uniqueness of solutions for the Stokes system in $D^\e$ in suitable function spaces, as well as the exponential decay of the Green's function in Section \ref{Green}. We also prove the Saint-Venant's principle to the Navier-Stokes system in $D^\e$.

Recall from the introduction that for $-\infty\le a<b\le \infty$, we define $D^\e_{a,b}= D^\e \cap \{ a<x_1<b\}$ and $\Gamma^\e_{a,b} = \Gamma^\e \cap  \{ a<x_1<b\}$. For $t \in \R$, we define $D^\e_t = D^\e \cap \{t< x_1 < \infty\}$ and $\Gamma^\e_t = \Gamma^\e \cap \{t< x_1 < \infty\}$.

Given $u_0 \in H^1_{\rm uloc}(D^\e_0)^3$ satisfying $\nabla\cdot u_0 = 0$, consider the linearized Navier-Stokes system around $u_0$
\begin{equation}\label{eq.Stokes.SemiDe}
\left\{
\begin{array}{ll}
-\Delta u + u_0\cdot \nabla u + u\cdot \nabla u_0 +\nabla p=0&\mbox{in}\ D^\e_0,\\
\nabla\cdot u=0&\mbox{in}\ D^\e_0,\\
\Phi^\e[u] = 0,\\
u = 0 &\mbox{on}\ \Gamma^\e_0.
\end{array}\right.
\end{equation}

\begin{theorem}\label{thm.De.SVP}
    Let $u_0 \in H^1_{\rm uloc}(D^\e_0)^3$ with $\nabla\cdot u_0 = 0$ and $u\in H^1_{\rm loc}(\overline{D_0^\e})^3$ be a weak solution of \eqref{eq.Stokes.SemiDe}. Then there exists $\tau_0,b>0$ such that if 
    \begin{equation}\label{cond.thm.De.SVP.u0<tau0}
        \| u_0 \|_{H^1_{\rm uloc}(D^\e_0)} \le \tau_0
    \end{equation}
    and
    \begin{equation}\label{cond.thm.De.SVP.exp}
        \liminf_{t\to \infty} e^{-bt}\int_{D^\e_{0,t}} |\nabla u|^2 =0,
    \end{equation}
    then there exist $C,c>0$ such that, for all $t\ge0$,
    \begin{equation}\label{est.SVP}
    \int_{D^\e_t} |\nabla u|^2 \le C e^{-c t} \int_{D^\e_{0,1}} |\nabla u|^2.
    \end{equation}
\end{theorem}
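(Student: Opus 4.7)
My plan is to establish a two-sided Saint-Venant inequality
\begin{equation*}
\int_{D^\e_{s,t}} |\nabla u|^2 \le C\Big( \int_{D^\e_{s-1,s}} |\nabla u|^2 + \int_{D^\e_{t,t+1}} |\nabla u|^2 \Big), \qquad 1 \le s < t,
\end{equation*}
by testing the weak form of \eqref{eq.Stokes.SemiDe} against a localized divergence-free variant of $u$, and then exploit hypothesis \eqref{cond.thm.De.SVP.exp} to pass $t\to\infty$ and iterate. The test function I would take is $\varphi = u\eta + w$, where $\eta = \eta(x_1)$ is a smooth cutoff equal to $1$ on $[s,t]$ and supported in $[s-1,t+1]$, and $w$ is a Bogovskii corrector supported in the two transition slabs $D^\e_{s-1,s}$ and $D^\e_{t,t+1}$ solving $\nabla\cdot w = -u_1\eta'$ in each slab with zero boundary trace. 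The necessary compatibility $\int u_1\eta' = 0$ in each slab follows directly from the vanishing-flux condition $\Phi^\e[u] = 0$. By \eqref{e.exisinterdom}, each slab is sandwiched between bounded John domains, so the $L^2$ solvability of the divergence equation in John domains \cite{ADM06} produces $w$ with $\|\nabla w\|_{L^2(\mathrm{slab})} \le C\|u\|_{L^2(\mathrm{slab})}$; this is the step in which the John hypothesis is genuinely used.

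Plugging $\varphi$ into the weak form kills the pressure (since $\nabla\cdot\varphi = 0$) and, after integration by parts, leaves
\begin{equation*}
\int |\nabla u|^2 \eta = -\int \nabla u : (u \otimes \nabla \eta) - \int \nabla u : \nabla w - \int \big[(u_0\cdot\nabla u) + (u\cdot\nabla u_0)\big]\cdot \varphi.
\end{equation*}
The first two right-hand-side terms live on the two transition slabs, and Cauchy--Schwarz, the Bogovskii bound, and the slab Poincar\'e inequality (valid since $u = 0$ on $\Gamma^\e$ and the cross-sections are bounded) control them by $C\big(\int_{D^\e_{s-1,s}} + \int_{D^\e_{t,t+1}}\big)|\nabla u|^2$. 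For the terms containing $u_0$, I would decompose the integral over unit slabs and apply the 3D Sobolev embedding $\|\varphi\|_{L^6(D^\e_{k,k+1})} \le C\|\nabla\varphi\|_{L^2(D^\e_{k,k+1})}$ (legitimate because $\varphi = 0$ on $\Gamma^\e$), together with $\|u_0\|_{L^3_{\rm uloc}} \le C\|u_0\|_{H^1_{\rm uloc}} \le C\tau_0$, arriving at a bound of the form $C\tau_0 \int_{D^\e_{s-1,t+1}}|\nabla u|^2$. Splitting $\int_{D^\e_{s-1,t+1}} = \int_{D^\e_{s-1,s}} + \int_{D^\e_{s,t}} + \int_{D^\e_{t,t+1}}$ and choosing $\tau_0$ small enough absorbs the middle contribution into the left-hand side, giving the claimed two-sided inequality.

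The main obstacle is the passage $t\to\infty$. Setting $a_k := \int_{D^\e_{k,k+1}}|\nabla u|^2$, the Saint-Venant inequality $\sum_{j=s}^{t-1}a_j \le C(a_{s-1} + a_t)$ by itself is compatible with exponentially growing sequences $a_k \sim r^k$ as soon as $r \ge 1 + 1/C$. The role of \eqref{cond.thm.De.SVP.exp}, with $b$ chosen strictly below $\log(1 + 1/C)$, is precisely to rule out this branch: a standard dichotomy argument shows that along a sequence $t_n\to\infty$ realizing $e^{-bt_n}\int_{D^\e_{0,t_n}}|\nabla u|^2 \to 0$ one must have $a_{t_n}\to 0$, so passing to the limit $t = t_n\to\infty$ in the two-sided inequality yields $E(s) := \int_{D^\e_s}|\nabla u|^2 \le C\,E_{s-1,s} = C\big(E(s-1) - E(s)\big)$ for every $s \ge 1$. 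This rearranges to $E(s) \le \alpha E(s-1)$ with $\alpha = C/(C+1) < 1$, and a straightforward induction together with one application of the inequality between $s = 0$ and $s = 1$ produces the stated bound $\int_{D^\e_t}|\nabla u|^2 \le C e^{-ct}\int_{D^\e_{0,1}}|\nabla u|^2$.
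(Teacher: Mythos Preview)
Your argument is correct and arrives at the same iteration $E(s)\le \alpha E(s-1)$ as the paper, but the route to the core energy inequality is genuinely different. The paper tests \eqref{eq.Stokes.SemiDe} with $u$ times a cutoff, which leaves a pressure term $\int p u_1 \eta'$; this is then handled by constructing an auxiliary John domain $D^*_{s,s+1}\supset D^\e_{s,s+1}$ (via a finite covering by the local John domains of Definition~\ref{def.John2}) and invoking the pressure estimate of Lemma~\ref{lem.p2uF}. You instead build a divergence-free test function $u\eta+w$ using a Bogovski\u{\i} corrector on the transition slabs, so the pressure never appears. Since Lemma~\ref{lem.Bogovski} and Lemma~\ref{lem.pressure} are dual statements, the two approaches use the John hypothesis in exactly the same place; your version is slightly cleaner bookkeeping-wise, while the paper's has the advantage that the same pressure machinery is reused later (e.g.\ in Proposition~\ref{prop.correlation}). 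One technical point you gloss over: the slab $D^\e_{s-1,s}$ is not directly given as a John domain by \eqref{e.exisinterdom}, which only furnishes John domains around boundary cubes; you need the same finite-union construction the paper spells out for $D^*_{s,s+1}$ before applying Bogovski\u{\i}. Your dichotomy for the passage $t\to\infty$ is equivalent to the paper's forward iteration of $G(s)\le \rho\,G(s+4)+M$ in Lemma~\ref{lem.exp2bdd}: in both cases the two-sided inequality forces either $\int_{D^\e_{0,t}}|\nabla u|^2$ to stay below $(C{+}1)\int_{D^\e_{0,1}}|\nabla u|^2$ or to grow at rate at least $\log(1+1/C)$, and \eqref{cond.thm.De.SVP.exp} with $b$ below that threshold selects the bounded branch.
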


\begin{remark}\label{rem.thm.De.SVP}
    With obvious modifications, the similar result to Theorem \ref{thm.De.SVP} as well as to Lemma \ref{lem.exp2bdd} below holds if the half-pipe $D^\e_0$ is replaced by $D^\e_k$ for any $k\in \R$, or by the other half-pipe $D^\e_{-\infty,k} := D^\e \cap \{-\infty< x_1 < k\}$ approaching $-\infty$, with the same constants $\tau_0,b,C,c$. However, to maintain brevity, the details will not be explicitly written.
\end{remark}

We first prove the following lemma.

\begin{lemma}\label{lem.exp2bdd}
    Under the assumptions of Theorem \ref{thm.De.SVP}, we have
    \begin{equation}\label{est.Finite Energy}
    \int_{D^\e_0} |\nabla u|^2 
    \le 
    C \int_{D^\e_{0,1}} |\nabla u|^2.
    \end{equation}
\end{lemma}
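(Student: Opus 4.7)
The plan is to derive a Caccioppoli-type recursion $F(t)\le F(1)+\theta F(t+1)$ with some $\theta\in(0,1)$ for the partial energy $F(t):=\int_{D^\e_{0,t}}|\nabla u|^2$, then iterate it, using the subexponential growth hypothesis \eqref{cond.thm.De.SVP.exp} to eliminate the tail term at infinity. I would take a smooth cutoff $\zeta\in C^\infty_c(\R)$ with $\zeta=1$ on $[1,t]$, $\zeta\equiv 0$ outside $[0,t+1]$, $\zeta(0)=\zeta'(0)=0$, and $|\zeta'|,|\zeta''|\lesssim 1$. The support of $\zeta'$ lies in $D^\e_{0,1}\cup D^\e_{t,t+1}$, and since $\Phi^\e[u]=0$ the compatibility $\int\zeta' u_1=0$ holds separately on each of the two transition slabs. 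The $L^2$ solvability of the divergence equation in John-type domains (as in \cite{ADM06} and to be used throughout Section \ref{SVP}) then yields a corrector $w$ supported in those slabs and vanishing on their boundary, with $\nabla\cdot w=\zeta' u_1$ and $\|\nabla w\|_{L^2}\lesssim\|u\|_{L^2(D^\e_{0,1}\cup D^\e_{t,t+1})}$. The test function $v=\zeta u-w$ is then divergence-free, vanishes on $\partial D^\e_0$ (via $\zeta(0)=0$, $w=0$ on cross-sections, and $u=0$ on $\Gamma^\e_0$), and is compactly supported, so it is admissible for the weak formulation of \eqref{eq.Stokes.SemiDe}.

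Testing the equation against $v$ eliminates the pressure. The diffusion term integrates by parts to $\int\zeta|\nabla u|^2-\tfrac12\int\zeta''|u|^2-\int\nabla u:\nabla w$. Among the convection contributions, $\int(u_0\cdot\nabla u)\cdot\zeta u$ reduces via $\nabla\cdot u_0=0$ to $-\tfrac12\int\zeta' u_{0,1}|u|^2$, localized to $\mathrm{supp}\,\zeta'$; integrating $\int(u\cdot\nabla u_0)\cdot\zeta u$ by parts produces a localized piece $-\int\zeta' u_1(u_0\cdot u)$ together with a nonlocal residual $-\int\zeta\,u_0\cdot(u\cdot\nabla)u$, which is the only term not supported on $\mathrm{supp}\,\zeta'$. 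Bounding the localized contributions (including those involving $w$) by Sobolev--Poincar\'e on unit slabs, where $u=0$ on the lateral portion $\Gamma^\e$, and the nonlocal residual by $C\tau_0 F(t+1)$ via the same embedding summed over slabs, I obtain
\[
F(t)-F(1)\le C(1+\tau_0)(I_0+I_t)+C\tau_0 F(t+1),\qquad I_k:=\int_{D^\e_{k,k+1}}|\nabla u|^2.
\]
Using $I_0\le F(1)$ and $I_t=F(t+1)-F(t)$ and rearranging gives, for $\tau_0$ sufficiently small,
\[
F(t)\le F(1)+\theta F(t+1),\qquad \theta=\frac{C(1+\tau_0)}{1+C}<1,
\]
which is the source of the smallness requirement \eqref{cond.thm.De.SVP.u0<tau0}.

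Iterating this recursion yields $F(t)\le F(1)/(1-\theta)+\theta^K F(t+K)$ for every $K\ge 1$. Choosing $b>0$ in Theorem \ref{thm.De.SVP} small enough that $\theta e^b<1$ and selecting through \eqref{cond.thm.De.SVP.exp} a subsequence $s_n=t+K_n\to\infty$ with $e^{-bs_n}F(s_n)\to 0$, the tail satisfies $\theta^{K_n}F(s_n)=\theta^{-t}(\theta e^b)^{s_n}\cdot e^{-bs_n}F(s_n)\to 0$, hence $F(t)\le F(1)/(1-\theta)$ uniformly for $t\ge 1$. Sending $t\to\infty$ concludes $\int_{D^\e_0}|\nabla u|^2\le C\int_{D^\e_{0,1}}|\nabla u|^2$. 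The principal technical obstacle is ensuring uniform constants (independent of $\e$ and the slab position) in the Bogovskii operator, the Poincar\'e inequality, and the Sobolev embedding $H^1\hookrightarrow L^6$ on the unit-length slabs $D^\e_{k,k+1}$ despite the possibly fractal roughness of $\Gamma^\e$; all three rest on the John-type geometry above scale $\e$ provided by Definition \ref{def.John2}.
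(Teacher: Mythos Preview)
Your approach is correct and yields the same conclusion, but it differs from the paper's in how the pressure is handled. You construct a divergence-free test function $v=\zeta u-w$ via a Bogovski\u{\i} corrector $w$ so that the pressure term disappears upon testing; the paper instead tests with $(\eta(x_1-s)-\tilde{\eta}(x_1))u$, keeps the pressure term, subtracts the slab mean $\bar p$ using $\Phi^\e[u]=0$, and bounds $\|p-\bar p\|_{L^2}$ on a constructed John domain $D^*_{s,s+1}$ via the dual statement to Bogovski\u{\i} (Lemma~\ref{lem.p2uF}). The two arguments are essentially dual and rest on the same John-domain ingredient. One technical point worth tightening in your write-up: to guarantee a uniform Bogovski\u{\i} constant and that the corrector $w$ stays inside $D^\e_0$, you should apply the operator not on the raw slab $D^\e_{0,1}$ but on an auxiliary John domain as built in the paper (there $D^\e_{s,s+1}\subset D^*_{s,s+1}\subset D^\e_{s-1,s+2}$); choosing the left transition of $\zeta$ to sit in, say, $[1,2]$ rather than $[0,1]$ keeps $D^*$ entirely within $D^\e_0$ and only shifts the final constant. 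Your recursion, iteration, and use of \eqref{cond.thm.De.SVP.exp} to kill the tail match the paper's endgame.
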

\begin{proof}
    Let $\eta$ be a smooth function defined in $\R$ such that $\eta(t) = 1$ for $t<0$, $\eta(t) = 0$ for $t>1$ and $|\eta'(t)|\le 2$. Define the nondecreasing function $G(s)$ by
    \begin{equation}\label{def.Gs}
            G(s) = \int_{D^\e_0} |\nabla u|^2 \eta(x_1-s) \dd x,
            \quad s>0.
    \end{equation}

    To avoid the boundary term at the left end $\{x_1=0\}$ due to integration by parts, we introduce another cut-off function. Let $\tilde{\eta}$ be a smooth function in $\R$ supported in $[-2/3,2/3]$ such that $\tilde{\eta}(t) = 1$ for  $t\in[-1/2,1/2]$. Then $\eta(x_1-s)-\tilde{\eta}(x_1)$ vanishes in $0<x_1<1/2$.

    Using the system \eqref{eq.Stokes.SemiDe} and integration by parts, we obtain
    \begin{equation}\label{eq.Gs}
        \begin{aligned}
            G(s) 
            & = \int_{D^\e_0} |\nabla u|^2 (\eta(x_1-s)-\tilde{\eta}(x_1)) \dd x 
            + \int_{D^\e_0} |\nabla u|^2 \tilde{\eta}(x_1) \dd x \\
            & = -\int_{D^\e_0} (u_0\cdot \nabla u)\cdot u (\eta(x_1 -s) - \tilde{\eta}(x_1)) \dd x \\
            &\quad
            - \int_{D^\e_0} (u\cdot \nabla u_0)\cdot u (\eta(x_1 -s) - \tilde{\eta}(x_1)) \dd x \\
            &\quad 
            + \int_{D^\e_0} p u_1 (\eta'(x_1 - s) - \tilde{\eta}'(x_1)) \dd x \\
            &\quad 
            - \int_{D^\e_0} u\cdot \partial_1 u (\eta'(x_1 - s) - \tilde{\eta}'(x_1)) \dd x 
            + \int_{D^\e_0} |\nabla u|^2 \tilde{\eta}(x_1) \dd x \\
            & =: \sum_{j=1}^5 I_j.
        \end{aligned}
    \end{equation}

    \textbf{Estimate of $I_1$ and $I_2$.} 
    Using the identity $(u_0\cdot \nabla u)\cdot u = (1/2) u_0\cdot \nabla |u|^2$ and the integration by parts, under the assumption \eqref{cond.thm.De.SVP.u0<tau0}, we have
    \begin{equation*}
    \begin{aligned}
        |I_1| 
        & \le C \int_{D^\e_{0,s+2}} (|u_0|+|\nabla u_0|) |u|^2 \\
        &\le C\| u_0  \|_{H^1_{\rm uloc}(D^\e_{0,s+2})} \|  u\|_{L^4(D^\e_{0,s+2})}^2\\
        & \le 
        C\tau_0 \int_{D^\e_{0,s+2}} |\nabla u|^2,
    \end{aligned}
    \end{equation*}
    where the Cauchy-Schwarz inequality is applied in the second line and the Sobolev-Poincar\'{e} inequality (in a long pipe) in the third. Similarly, we have the same estimate for $|I_2|$.
    
    \textbf{Estimate of $I_3$.} The estimate involving the pressure is the main technical difficulty of the proof due to the rough domains. Notice that $\eta'(x_1-s)$ is supported in $[s,s+1]$. As a preliminary fact, we first claim that there exists a John domain $D_{s,s+1}^*$ such that $D_{s,s+1}^\e \subset D_{s,s+1}^* \subset D_{s-1,s+2}^\e$, and the associated constant depends only on $L$ in Definition \ref{def.John2}.
    
    To see the claim, we first note that if $\omega_1$ and $\omega_2$ are two John domains and $\omega_1 \cap \omega_2 \neq \emptyset$, then $\omega_1 \cup \omega_2$ is also a John domain. Assume that $x^1$ and $x^2$ are the centers of $\omega_1$ and $\omega_2$, respectively, and there exists a Lipschitz curve $\gamma = \gamma(\omega_1, \omega_2)$ contained in $\omega^1 \cup \omega^2$ and connecting $x^1$ and $x^2$. Let $d_\gamma$ be the distance from $\gamma$ to $\partial(\omega_1\cup \omega_2)$. Then it is not hard to see from the geometry and Definition \ref{def.John} that the constant of the John domain $\omega_1 \cup \omega_2$ (with a center on $\gamma$) depends on the individual constants of $\omega_1$ and $\omega_2$, as well as $d_\gamma$. From this fact, we can apply the John domain assumption of $D^\e$ and choose a finite number John domains $\omega_j, \ j=1,2,\cdots, j_0$, with diameters comparable to $1$ that cover $D^\e_{s,s+1}$ and are contained in $D^\e_{s-1,s+2}$. Moreover, these John domains $\omega_j$ are well-chosen so that their centers are all connected with Lipschitz curves with a uniform lower bound of $d_\gamma$. By this construction, and defining $D_{s,s+1}^* = \cup_{j=1}^{j_0} \omega_j$, we see that $D_{s,s+1}^*$ is a John domain with a constant $L^*$ depending only on $L$. Clearly,  $D_{s,s+1}^\e \subset D_{s,s+1}^* \subset D_{s-1,s+2}^\e$ is satisfied. 
    
    Now, to estimate the integral involving the pressure in $D^\e_{s,s+1}$, let
    \begin{equation*}
        \bar{p} = \dashint_{D_{s,s+1}^*} p.
    \end{equation*}
    Then, by \eqref{def.flux} with the flux assumption $\Phi^\e[u] = 0$, we have
    \begin{equation}\label{est.pbar.flux}
        \int_{D^\e_0} u_1(x) \bar{p} \eta'(x_1 - s) \dd x 
        = \int_{D^\e_0} u_1(x) \bar{p} \tilde{\eta}'(x_1) \dd x
        = 0,
    \end{equation}
    since $\eta'(x_1-s)$ is supported in $[s,s+1]$. Consequently, using \eqref{est.pbar.flux} and the Cauchy–Schwarz inequality, we have
    \begin{equation}\label{est.pu1-1}
    \begin{aligned}
        \bigg| \int_{D^\e_0} p u_1 \eta'(x_1 - s) \dd x \bigg| & = \bigg| \int_{D^\e_0} (p - \bar{p}) u_1 \eta'(x_1 - s) \dd x \bigg| \\
        & \le C \bigg( \int_{D^\e_{s,s+1}  } | p - \bar{p}|^2 \bigg)^{1/2} \bigg( \int_{D^\e_{s,s+1}  } | u_1|^2 \bigg)^{1/2} \\
        & \le C \bigg( \int_{D^*_{s,s+1}  } | p - \bar{p}|^2 \bigg)^{1/2} \bigg( \int_{D^\e_{s,s+1}  } | \nabla u|^2 \bigg)^{1/2},
    \end{aligned}
    \end{equation}
    where we have used the Poincar\'{e} inequality in the last inequality.

    Next, observe that by $\nabla\cdot u = \nabla\cdot u_0 = 0$,  $(u,p)$ satisfies the system $-\Delta u + \nabla p = \nabla\cdot F$ with $F = -(u_0\otimes u + u\otimes u_0)$ in the John domain $D_{s,s+1}^*$. By Lemma \ref{lem.p2uF}, we have
    \begin{equation}
    \begin{aligned}
        \| p - \bar{p}\|_{L^2(D_{s,s+1}^*)} & \le C(\| \nabla u \|_{L^2(D_{s,s+1}^*)} + \| u_0\otimes u + u\otimes u_0 \|_{L^2(D_{s,s+1}^*)} ) \\
        & \le C(\| \nabla u \|_{L^2(D_{s,s+1}^*)} + \| u_0 \|_{H^1(D^\e_{s-1,s+2})} \| \nabla u \|_{L^2(D_{s-1,s+2}^\e)} ) \\
        & \le C\| \nabla u \|_{L^2(D_{s-1,s+2}^\e)},
    \end{aligned}
    \end{equation}
    where we have used the Sobolev and Poincar\'{e} inequalities in John domains in the second inequality. Inserting the last estimate into \eqref{est.pu1-1}, we obtain
    \begin{equation}
        \bigg| \int_{D^\e_0} p u_1 \eta'(x_1 - s) \dd x \bigg| \le C\int_{D^\e_{s-1,s+2}  } |\nabla u|^2
    \end{equation}
    
    Similarly, since $\eta'(x_1)$ restricted in $\{ x_1>0 \}$ is supported in $[1/2,2/3]$, we have
    \begin{equation*}
        \bigg| \int_{D^\e_0} p u_1 \tilde{\eta}'(x_1) \dd x \bigg| \le C\int_{D^\e_{0,1}  } |\nabla u|^2,
    \end{equation*}
    and thus
    \begin{equation*}
        |I_3| \le C\int_{D^\e_{0,1}  } |\nabla u|^2 + C \int_{D^\e_{s-1,s+2}  } |\nabla u|^2.
    \end{equation*}

    \textbf{Estimate of $I_4$ and $I_5$.} By the Poincar\'{e} inequality, it is easy to see that
    \begin{equation*}
        |I_4| \le C\int_{D^\e_{0,1}  } |\nabla u|^2 + C\int_{D^\e_{s,s+1}  } |\nabla u|^2
    \end{equation*}
    and that
    \begin{equation*}
        |I_5| \le \int_{D^\e_{0,1}  } |\nabla u|^2.
    \end{equation*}

    Inserting all the estimates for $I_1$--$I_5$ into \eqref{eq.Gs} gives
    \begin{equation}\label{est.Gs+}
        G(s) \le C \tau_0 \int_{D^\e_{0,s+2}} |\nabla u|^2 +  C \int_{D^\e_{s-1,s+2}  } |\nabla u|^2 + C \int_{D^\e_{0,1}  } |\nabla u|^2.
    \end{equation}
    Observing that
    \begin{equation*}
        \int_{D^\e_{0,s+2}} |\nabla u|^2 \le G(s) + \int_{D^\e_{s,s+2}} |\nabla u|^2
    \end{equation*}
    and that, for $s>2$,
    \begin{equation*}
        \int_{D^\e_{s-1,s+2}  } |\nabla u|^2 \le G(s+2) - G(s-2),
    \end{equation*}
    we have
    \begin{equation*}
        G(s) \le C\tau_0 G(s) + C(G(s+2) - G(s-2)) + C \int_{D^\e_{0,1}  } |\nabla u|^2.
    \end{equation*}
    Assume that $\tau_0>0$ is small so that $C\tau_0\le 1/2$. Then we have
    \begin{equation*}
        G(s-2) \le G(s) \le C(G(s+2) - G(s-2)) + C \int_{D^\e_{0,1}  } |\nabla u|^2.
    \end{equation*}
    Since $s>2$ is arbitrary, we replace $s-2$ by $s$ to see that
    \begin{equation*}
        G(s) \le \frac{C}{C+1} G(s+4) +  \int_{D^\e_{0,1}  } |\nabla u|^2.
    \end{equation*}
    Set $\rho = C/(C+1)<1$ and iterate this inequality. Then we obtain, for all $k\ge 1$,
    \begin{equation*}
    \begin{aligned}
        G(s) 
        &\le 
        \rho^k G(s+4k) 
        + \Big(\sum_{j=0}^{k-1} \rho^j \Big) \int_{D^\e_{0,1}} |\nabla u|^2 \\
        &\le 
        e^{k \log \rho} G(s+4k) + \frac{1}{1-\rho} \int_{D^\e_{0,1}} |\nabla u|^2.
    \end{aligned}
    \end{equation*}
    Now if $0<b<-(1/4) \log \rho$, then the assumption \eqref{cond.thm.De.SVP.exp} implies that there exists a subsequence of integers $k$ such that $e^{k \log \rho} G(s+4k)$ converges to zero. It follows that
    \begin{equation*}
        G(s)
        \le 
        \frac{1}{1-\rho} \int_{D^\e_{0,1}  } |\nabla u|^2.
    \end{equation*}
    Finally, taking $s\to \infty$ in this inequality, we obtain the desired estimate \eqref{est.Finite Energy}.
\end{proof}

\begin{proofx}{Theorem \ref{thm.De.SVP}}
    This is a simple corollary of Lemma \ref{lem.exp2bdd} and Remark \ref{rem.thm.De.SVP}. Let
    \begin{equation*}
        H(s) = \int_{D^\e_s} |\nabla u|^2,
        \quad s\ge0.
    \end{equation*}
    By Lemma \ref{lem.exp2bdd} and Remark \ref{rem.thm.De.SVP}, we have
    \begin{equation*}
        H(s) \le C(H(s) - H(s+1)),
    \end{equation*}
    which gives
    \begin{equation*}
        H(s+1) \le \frac{C-1}{C} H(s).
    \end{equation*}
    Since $(C-1)/C < 1$, a simple iteration leads to \eqref{est.SVP} with $c = -\log((C-1)/C)$.
\end{proofx}

Theorem \ref{thm.De.SVP} implies the uniqueness of the ``bounded'' solution of the linearized Navier-Stokes system. Particularly, it implies the uniqueness of the Stokes flow (i.e., the solution when $u_0 = 0$) in $D^\e$ in suitable function spaces, as well as the exponential decay estimate of the Green's function in Section \ref{Green}. 

\begin{corollary}\label{coro.Stokes.uniqueness}
    Let $u_0 \in H^1_{\rm uloc}(D^\e_0)^3$ with $\nabla\cdot u_0 = 0$. Then there exists $\tau_0>0$ such that if 
    \begin{equation}\label{cond.Stokes.Uniqueness}
        \| u_0 \|_{H^1_{\rm uloc}(D^\e_0)} \le \tau_0,
    \end{equation}
    then the linearized Navier-Stokes system around $u_0$
    \begin{equation}\label{eq.Stokes.Uniqueness}
        \left\{
        \begin{array}{ll}
    -\Delta u + u_0\cdot \nabla u + u\cdot \nabla u_0 + \nabla p 
    =0 &\mbox{in}\ D^\e,\\
    \nabla\cdot u = 0 &\mbox{in}\ D^\e,\\
    \Phi^\e[u] = 0,\\
    u = 0 &\mbox{on}\ \Gamma^\e
    \end{array}\right.
    \end{equation}
    only has a trivial solution $u = 0$ in $H^1_{\rm uloc}(D^\e)^3$.
\end{corollary}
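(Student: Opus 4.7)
The plan is to apply Theorem \ref{thm.De.SVP}, together with its translated/reversed versions from Remark \ref{rem.thm.De.SVP}, on every shifted half-pipe $D^\e_k$ and let $k\to -\infty$. Since a uniformly locally $H^1$ solution cannot decay arbitrarily fast, the only way exponential Saint-Venant decay from \emph{both} directions is consistent with a finite uloc-norm is that the Dirichlet energy vanishes identically.

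Let $u\in H^1_{\rm uloc}(D^\e)^3$ solve \eqref{eq.Stokes.Uniqueness}. The smallness hypothesis \eqref{cond.Stokes.Uniqueness} trivially gives
$\|u_0\|_{H^1_{\rm uloc}(D^\e_k)}\le \|u_0\|_{H^1_{\rm uloc}(D^\e)}\le \tau_0$
for every $k\in\R$, so the shifted versions of Theorem \ref{thm.De.SVP} on $D^\e_k$ (Remark \ref{rem.thm.De.SVP}) apply with a single constant $\tau_0$. Next, the uloc bound on $u$ yields the crude linear growth
$$\int_{D^\e_{k,k+t}}|\nabla u|^2\le C\,t\,\|u\|_{H^1_{\rm uloc}(D^\e)}^2,$$
which immediately forces $\liminf_{t\to\infty}e^{-bt}\int_{D^\e_{k,k+t}}|\nabla u|^2=0$ for any $b>0$, verifying the decay hypothesis \eqref{cond.thm.De.SVP.exp} on each $D^\e_k$ uniformly in $k$.

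Invoking Theorem \ref{thm.De.SVP} on $D^\e_k$ for every $k\le t-1$ then gives
$$\int_{D^\e_t}|\nabla u|^2 \le C e^{-c(t-k)}\int_{D^\e_{k,k+1}}|\nabla u|^2 \le C e^{-c(t-k)}\|u\|_{H^1_{\rm uloc}(D^\e)}^2,$$
with constants independent of $k$. Fixing $t$ and letting $k\to-\infty$ makes the right-hand side vanish, so $\int_{D^\e_t}|\nabla u|^2=0$ for every $t\in\R$. Therefore $\nabla u\equiv 0$ throughout $D^\e$, and the no-slip boundary condition $u=0$ on $\Gamma^\e$ then forces $u\equiv 0$ (the pressure is automatically determined as a constant, which is irrelevant to the uniqueness statement).

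The only non-routine point is making sure the Saint-Venant hypotheses transfer uniformly to every translate $D^\e_k$: the smallness carries over because the defining norm of $H^1_{\rm uloc}(D^\e)$ is itself shift-invariant, and the decay hypothesis is verified by the cheap linear-in-$t$ bound above. Once these checks are in hand, the conclusion is a one-line consequence of letting the base point of the Saint-Venant estimate escape to $-\infty$.
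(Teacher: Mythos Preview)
Your proof is correct and follows essentially the same approach as the paper. Both arguments verify the hypotheses of Theorem \ref{thm.De.SVP} (using the uloc bound to get the linear growth condition \eqref{cond.thm.De.SVP.exp}), then apply the translated Saint-Venant estimate from a moving base point and let that base point escape to infinity; the only cosmetic difference is that the paper fixes the target slice $D^\e_{0,1}$ and sends the base point to $+\infty$ via the reversed half-pipe of Remark \ref{rem.thm.De.SVP}, whereas you fix $t$ and send $k\to-\infty$ via the forward half-pipe, which is the mirror image of the same computation.
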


\begin{remark}\label{rem.coro.Stokes.uniqueness}
    Consequently, under the assumption \eqref{cond.Stokes.Uniqueness}, the family $\{u\}$ of solutions of
    \begin{equation*}
        \left\{
        \begin{array}{ll}
    -\Delta u + u_0\cdot \nabla u + u\cdot \nabla u_0 + \nabla p 
    =0 &\mbox{in}\ D^\e,\\
    \nabla\cdot u = 0 &\mbox{in}\ D^\e,\\
    u = 0 &\mbox{on}\ \Gamma^\e
    \end{array}\right.
    \end{equation*}
    in $H^1_{\rm uloc}(D^\e)^3$ forms the vector space where each of the elements is characterized by its flux. This fact will play a crucial role when ``normalizing" boundary layers in Section \ref{Normal}.
\end{remark}

\begin{proof}
Choose $\tau_0$ to be the same constant as in Theorem \ref{thm.De.SVP} and suppose that $u$ in $H^1_{\rm uloc}(D^\e)^3$ solves \eqref{eq.Stokes.Uniqueness}. To prove the statement, it suffices to show that, for all $k\in\R$, 
\begin{equation*}
    \int_{D^\e_{k,k+1}} |\nabla u|^2=0.
\end{equation*}
We may only consider the case $k=0$ for simplicity. The general case is no harder to prove.

Since $u_0$ satisfies \eqref{cond.Stokes.Uniqueness} and $u$ belongs to $H^1_{\rm uloc}(D^\e)^3$, Theorem \ref{thm.De.SVP} and Remark \ref{rem.thm.De.SVP} imply that, for any $t>0$,
\begin{align*}
\begin{split}
    \int_{D^\e_{0,1}} |\nabla u|^2
    \le
    \int_{D^\e\cap\{-\infty<x_1<1\}} |\nabla u|^2
    \le
    C e^{-c t} \int_{D^\e_{t,t+1}} |\nabla u|^2 \le Ce^{-c t} \| u \|_{H^1_{\rm uloc}(D^\e)}^2.
\end{split}
\end{align*}
Note that the left-hand side is independent of $t$. Letting $t\to \infty$, we have $\| \nabla u \|_{L^2(D^\e_{0,1})} = 0$.
\end{proof}

The next corollary is crucial for us. Particularly the localization estimate \eqref{est.MStokes.Localization} derived from the Saint-Venant's principle is the key to convert the local energy/fluctuation of the source term to that of the solution.

\begin{corollary}\label{coro.LP.ExistenceUniqueness}
    Let $u_0 \in H^1_{\rm uloc}(D^\e)^3$ with $\nabla\cdot u_0 = 0$ and $F \in L^2_{\rm uloc}(D^\e)^{3\times3}$. Then there exist $\tau_0,c>0$ such that if
    \begin{equation}
        \| u_0 \|_{H^1_{\rm uloc}(D^\e)} \le \tau_0,
    \end{equation}
    then the linearized Navier-Stokes system around $u_0$
    \begin{equation}\label{eq.Stokes+u0}
        \left\{
        \begin{array}{ll}
        -\Delta w + u_0\cdot \nabla w + w\cdot \nabla u_0 + \nabla \pi = \nabla\cdot F &\mbox{in}\ D^\e,\\
        \nabla\cdot w = 0&\mbox{in}\ D^\e,\\
        \Phi^\e[w] = 0,\\
        w = 0&\mbox{on}\ \Gamma^\e
    \end{array}\right.
    \end{equation}
    has a unique solution $w\in H^1_{\rm uloc}(D^\e)^3$ such that, for all $k\in \R$,
    \begin{equation}\label{est.MStokes.Localization}
        \| \nabla w \|_{L^2(D^\e_{k,k+1})} \le C\int_{\R} \| F \|_{L^2(D^\e_{s,s+1})} e^{-c |k-s|} \dd s.
    \end{equation}
    In particular, we have
    \begin{equation}
        \| \nabla w \|_{L^2_{\rm uloc}(D^\e)} \le C\| F \|_{L^2_{\rm uloc}(D^\e)}.
    \end{equation}
\end{corollary}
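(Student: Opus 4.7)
The plan is to prove uniqueness by reduction to Corollary \ref{coro.Stokes.uniqueness}, and to prove existence together with the localization estimate by decomposing the source into unit-slab pieces and combining an $H^1$ energy estimate on bounded truncations with the Saint-Venant's principle from Theorem \ref{thm.De.SVP}. Uniqueness is immediate: any two $H^1_{\rm uloc}$ solutions of \eqref{eq.Stokes+u0} differ by an element of $H^1_{\rm uloc}(D^\e)^3$ solving the homogeneous system \eqref{eq.Stokes.Uniqueness}, which must vanish by Corollary \ref{coro.Stokes.uniqueness}.

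For existence I would decompose $F = \sum_{k \in \Z} F_k$ with $F_k := F \chi_{D^\e_{k,k+1}}$ and solve for each $w_k$ separately. Each $w_k$ is built by an exhaustion: on the bounded John subdomain $D^\e \cap \{|x_1|<N\}$, the linearized system with source $\nabla\cdot F_k$ has a unique small solution $w_k^{(N)}$ by Lax--Milgram plus a Banach fixed-point iteration, using $\|u_0\|_{H^1_{\rm uloc}}\le \tau_0$ together with the cancellation $\int (u_0\cdot\nabla w)\cdot w = 0$ from $\nabla\cdot u_0 = 0$ and the Sobolev--Poincar\'{e} inequality on bounded John domains to absorb the convective terms. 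Testing with $w_k^{(N)}$ yields the $N$-uniform bound $\|\nabla w_k^{(N)}\|_{L^2}^2 \le C\|F_k\|_{L^2}^2$, so a weak limit gives $w_k \in H^1(D^\e)^3$ solving the target system on the whole pipe.

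Theorem \ref{thm.De.SVP}, applied on the two half-pipes $\{x_1>k+1\}$ and (via Remark \ref{rem.thm.De.SVP}) $\{x_1<k\}$ where $\nabla\cdot F_k \equiv 0$, then yields the exponential decay
\[
    \|\nabla w_k\|_{L^2(D^\e_{j,j+1})} \le C e^{-c|j-k|}\|F_k\|_{L^2}, \qquad j\in\Z,
\]
the hypothesis \eqref{cond.thm.De.SVP.exp} being checked from the global $H^1$ bound on $w_k$. Setting $w := \sum_{k\in\Z} w_k$, which converges in $H^1_{\rm loc}(D^\e)^3$ by absolute summability of the right-hand side, and applying the triangle inequality strip by strip, converts the discrete sum $\sum_k e^{-c|j-k|}\|F\|_{L^2(D^\e_{k,k+1})}$ into the integral form \eqref{est.MStokes.Localization}; the $L^2_{\rm uloc}$ bound then follows since $\int_\R e^{-c|k-s|}\dd s$ is finite.

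The main obstacle I anticipate is executing the bounded-domain step cleanly on rough John boundaries: one needs solvability of the divergence equation in $L^2$ on John subdomains (available via \cite{ADM06}) together with uniform-in-$N$ absorption of the convective terms, matching the framework of Theorem \ref{thm.De.SVP}. Once that is in place, the Saint-Venant's principle already established is precisely what upgrades compactness-based existence into the sharp localized bound \eqref{est.MStokes.Localization}.
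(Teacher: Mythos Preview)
Your overall architecture is correct and matches the paper's: decompose $F=\sum_{k\in\Z}F_k$ into unit-slab pieces, solve each piece, apply Saint-Venant's principle (Theorem \ref{thm.De.SVP} and Remark \ref{rem.thm.De.SVP}) on the two half-pipes where the source vanishes to get exponential decay away from the support, sum, and convert the discrete convolution into \eqref{est.MStokes.Localization}; uniqueness via Corollary \ref{coro.Stokes.uniqueness}.

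The only place you take a detour is the existence step for each $w_k$. You propose an exhaustion on bounded truncations $D^\e\cap\{|x_1|<N\}$ followed by a weak limit, and you flag the rough-boundary issues on these truncations as the main obstacle. The paper avoids this entirely: since $F_k$ is supported in a single slab, it lies in $L^2(D^\e)^{3\times 3}$, and the linearized system is solved directly by Lax--Milgram on the whole Hilbert space $H^1_0(D^\e)^3$ (smallness of $\|u_0\|_{H^1_{\rm uloc}}$ ensures coercivity). This yields $w_k\in H^1_0(D^\e)^3$ with the global energy bound in one line, and no truncation, compactness, or John-domain solvability on bounded pieces is needed. Also, your mention of a ``Banach fixed-point iteration'' is a slip: the system \eqref{eq.Stokes+u0} is linear, so Lax--Milgram alone suffices. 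None of this is a gap --- your route works --- but the paper's direct approach is shorter and sidesteps exactly the obstacle you were worried about.
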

\begin{proof}
    Choose $\tau_0$ to be the same constant as in Theorem \ref{thm.De.SVP}. We first write $F = \sum_{j\in \Z} F_j$ with $F_j = F \mathbf{1}_{ \{ j\le x_1 < j+1\} } $ where $\mathbf{1}_E$ refers to the characteristic function on the set $E$. Fix $j\in\Z$. Clearly, $F_j \in L^2(D^\e)^{3\times3}$. By the Lax-Milgram theorem, the linear system \eqref{eq.Stokes+u0} with $F$ replaced by $F_j$ has a unique solution $w_j$ in the Hilbert space $H^1_{0}(D^\e)^3$ for any $j$, under additional smallness on $\tau_0$ if necessary. Moreover, the energy estimate yields
    \begin{equation*}
        \| \nabla w_j \|_{L^2(D^\e)} 
        \lesssim 
        \| F_j \|_{L^2(D^\e_{j,j+1})}
        =
        \| F \|_{L^2(D^\e_{j,j+1})}.
    \end{equation*}
    Then Theorem \ref{thm.De.SVP} and Remark \ref{rem.thm.De.SVP} imply that, for some $c>0$,
    \begin{equation*}
        \| \nabla w_j \|_{L^2(D^\e_{k,k+1})} \lesssim
        e^{-c |k-j|} \| \nabla w_j \|_{L^2(D^\e_{j,j+1})} 
        \lesssim
        e^{-c |k-j|} \| F \|_{L^2(D^\e_{j,j+1})},
        \quad k\in\R.
    \end{equation*}
    Set $w = \sum_{j\in \Z} w_j$. Then we have
    \begin{equation*}
    \begin{aligned}
        \| \nabla w \|_{L^2(D^\e_{k,k+1})} 
        &\lesssim
        \sum_{j\in \Z} e^{-c |k-j|} \| F \|_{L^2(D^\e_{j,j+1})}  \\
        &\lesssim
        \int_{\R} \| F \|_{L^2(D^\e_{s,s+1})} 
        e^{-c |k-s|}
        \dd s,
        \quad k\in\R.
    \end{aligned}
    \end{equation*}
    Note that the right-hand side is finite since $F \in L^2_{\rm uloc}(D^\e)^{3\times3}$. Thus $w$ is well-defined in the entire $D^\e$ and satisfies \eqref{est.MStokes.Localization} for all $k\in \R$.
    Moreover, $w$ is a solution of \eqref{eq.Stokes+u0} belonging to $H^1_{\rm uloc}(D^\e)^3$ by construction, which is unique due to Corollary \ref{coro.Stokes.uniqueness}. 
\end{proof}

\begin{proposition}\label{prop.NP.existence}
    Let $u_0 \in H^1_{\rm uloc}(D^\e)^3$ with $\nabla\cdot u_0 = 0$ and $F \in L^2_{\rm uloc}(D^\e)^{3\times3}$. Then there exist $\tau_0,\delta_0>0$ such that if
    \begin{equation}\label{cond.prop.NP.existence}
        \| u_0 \|_{H^1_{\rm uloc}(D^\e)} \le \tau_0, 
        \qquad 
        \| F \|_{L^2_{\rm uloc}(D^\e_0)} \le \delta_0,
    \end{equation}
    then the perturbed Navier-Stokes system around $u_0$
    \begin{equation}\label{eq.NS+u0}
        \left\{
        \begin{array}{ll}
        -\Delta w + w\cdot \nabla w + u_0\cdot \nabla w + w\cdot \nabla u_0 + \nabla \pi = \nabla\cdot F &\mbox{in}\ D^\e,\\
        \nabla\cdot w = 0&\mbox{in}\ D^\e,\\
        \Phi^\e[w] = 0,\\
        w = 0&\mbox{on}\ \Gamma^\e
        \end{array}\right.
    \end{equation}
    has a solution $w\in H^1_{\rm uloc}(D^\e)^3$ such that
    \begin{equation}\label{est.CMT}
        \| \nabla w \|_{L^2_{\rm uloc}(D^\e)} \le C\| F \|_{L^2_{\rm uloc}(D^\e)}.
    \end{equation}
\end{proposition}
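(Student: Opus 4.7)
The plan is to solve \eqref{eq.NS+u0} via a Banach fixed-point argument in the Hilbert-like space $V := \{w \in H^1_{\rm uloc}(D^\e)^3 : \nabla\cdot w = 0 \text{ in } D^\e,\ w|_{\Gamma^\e}=0,\ \Phi^\e[w]=0\}$ equipped with the norm $\|w\|_V := \|\nabla w\|_{L^2_{\rm uloc}(D^\e)}$. Since $\nabla\cdot w = 0$, we rewrite the nonlinear term as $w\cdot\nabla w = \nabla\cdot(w\otimes w)$, and define the map $T: V \to V$ by sending $w \mapsto \tilde w$, where $\tilde w$ is the unique solution provided by Corollary \ref{coro.LP.ExistenceUniqueness} of the linearized system
\begin{equation*}
-\Delta\tilde w + u_0\cdot\nabla\tilde w + \tilde w\cdot\nabla u_0 + \nabla\tilde\pi
= \nabla\cdot F - \nabla\cdot(w\otimes w) \quad \text{in } D^\e,
\end{equation*}
with $\nabla\cdot\tilde w = 0$, $\Phi^\e[\tilde w]=0$, $\tilde w|_{\Gamma^\e}=0$. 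This is well-defined as soon as the right-hand side source $F - w\otimes w$ lies in $L^2_{\rm uloc}(D^\e)^{3\times 3}$, and by \eqref{est.CMT} (applied with the linear theory) we obtain $\|T w\|_V \le C_1(\|F\|_{L^2_{\rm uloc}} + \|w\otimes w\|_{L^2_{\rm uloc}})$.

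The crucial ingredient is a uniformly-local Sobolev--Poincar\'e inequality of the form
\begin{equation*}
\|w\|_{L^4_{\rm uloc}(D^\e)} \le C_2 \|\nabla w\|_{L^2_{\rm uloc}(D^\e)},\qquad w\in V,
\end{equation*}
which yields $\|w\otimes w\|_{L^2_{\rm uloc}} \le \|w\|_{L^4_{\rm uloc}}^2 \le C_2^2 \|w\|_V^2$. To derive this, I would employ the John-domain construction already used in the pressure estimate of Lemma \ref{lem.exp2bdd}: for each $k\in\Z$, one has $D^\e_{k,k+1}\subset D^*_{k,k+1}\subset D^\e_{k-1,k+2}$ with $D^*_{k,k+1}$ a bounded John domain of uniformly controlled constants, so the 3D Sobolev embedding $H^1(D^*_{k,k+1}) \hookrightarrow L^4(D^*_{k,k+1})$ and the Poincar\'e inequality (using $w=0$ on $\Gamma^\e\cap \partial D^*_{k,k+1}$) give $\|w\|_{L^4(D^\e_{k,k+1})} \le C\|\nabla w\|_{L^2(D^\e_{k-1,k+2})}$, and taking the supremum over $k$ yields the claim.

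Given this estimate, choose $R = 2C_1\delta_0$ and consider the closed ball $B_R = \{w\in V : \|w\|_V \le R\}$. For $w\in B_R$,
\begin{equation*}
\|Tw\|_V \le C_1\|F\|_{L^2_{\rm uloc}} + C_1 C_2^2 R^2 \le C_1\delta_0 + 4 C_1^3 C_2^2 \delta_0^2,
\end{equation*}
which is bounded by $R$ provided $\delta_0$ is small enough that $4C_1^2 C_2^2 \delta_0 \le 1/2$. Hence $T$ maps $B_R$ into itself. For the contraction property, the identity $w_1\otimes w_1 - w_2\otimes w_2 = (w_1-w_2)\otimes w_1 + w_2\otimes (w_1-w_2)$ together with H\"older and the Sobolev--Poincar\'e estimate above yield
\begin{equation*}
\|Tw_1 - Tw_2\|_V \le C_1 C_2^2(\|w_1\|_V + \|w_2\|_V)\|w_1 - w_2\|_V \le 2C_1 C_2^2 R \|w_1 - w_2\|_V,
\end{equation*}
which is a contraction after possibly shrinking $\delta_0$ further. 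The Banach fixed-point theorem then produces a fixed point $w\in B_R$ solving \eqref{eq.NS+u0}, and feeding it back into the linear estimate gives $\|w\|_V \le C_1\|F\|_{L^2_{\rm uloc}} + (1/2)\|w\|_V$, i.e., $\|w\|_V \le 2C_1\|F\|_{L^2_{\rm uloc}}$, which is exactly \eqref{est.CMT}.

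The main obstacle I anticipate is precisely the uniformly-local Sobolev--Poincar\'e inequality. Because $D^\e$ is only of John type (not Lipschitz), one cannot invoke classical Sobolev embeddings on thin cylindrical slabs directly; one must construct the enlarged John pieces $D^*_{k,k+1}$ with uniformly controlled John constants, and verify Poincar\'e using the fact that each such piece meets the no-slip portion $\Gamma^\e$ of the boundary substantially. All other steps (linear solvability, H\"older, smallness threshold selection) reduce to standard manipulations once this embedding and Corollary \ref{coro.LP.ExistenceUniqueness} are in hand.
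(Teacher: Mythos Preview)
Your proposal is correct and follows essentially the same route as the paper: both argue by the Banach fixed-point theorem, defining $T(w)$ as the unique $H^1_{\rm uloc}$ solution of the linearized problem from Corollary \ref{coro.LP.ExistenceUniqueness} with source $F - w\otimes w$, and both close the contraction via a Sobolev--Poincar\'e type bound $\|w\otimes w\|_{L^2_{\rm uloc}} \lesssim \|\nabla w\|_{L^2_{\rm uloc}}^2$ for functions vanishing on $\Gamma^\e$. The paper simply invokes ``the Gagliardo--Nirenberg inequality'' at this step, whereas you (correctly) flag that the John-domain geometry requires the enlarged pieces $D^*_{k,k+1}$ from Lemma \ref{lem.exp2bdd} to justify the embedding; this is the right level of care, and nothing further is needed.
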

\begin{proof}
    Choose $\tau_0$ to be the same constant as in Corollary \ref{coro.LP.ExistenceUniqueness}. The statement will follow from Corollary \ref{coro.LP.ExistenceUniqueness} and the contraction mapping theorem. Consider the nonlinear mapping
    \[
        T = T_F: H_{\rm uloc}^1(D^\e)^3 \to H_{\rm uloc}^1(D^\e)^3
    \]
    such that $w := T(u)$ is the solution of \eqref{eq.Stokes+u0} with $F$ replaced by $-u\otimes u + F$. The Gagliardo-Nirenberg inequality and Corollary \ref{coro.LP.ExistenceUniqueness} verify that $T$ is well-defined. Moreover, 
    \begin{equation}\label{est1.prf.prop.NP.existence}
        \| w \|_{H^1_{\rm uloc}(D^\e)} 
        \le C( \| u \|_{H^1_{\rm uloc}(D^\e)}^2  + \| F\|_{L^2_{\rm uloc}(D^\e)} ).
    \end{equation}
    Defining
    \begin{equation*}
        \mathcal{X}_{\delta_1} = \{ u \in H_{\rm uloc}^1(D^\e)^3~|~ u|_{\Gamma^\e}=0,\mkern9mu\| u\|_{H_{\rm uloc}^1(D^\e)} \le \delta_1 \},
    \end{equation*}
    we will show that there exist $\delta_0,\delta_1>0$ such that $T$ is a contraction in $\mathcal{X}_{\delta_1}$ for $F$ satisfying \eqref{cond.prop.NP.existence}. To make sure that $T$ maps $\mathcal{X}_{\delta_1}$ to itself, in view of \eqref{est1.prf.prop.NP.existence}, we require that
    \begin{equation*}
        C(\delta_1^2 + \delta_0) \le \delta_1.
    \end{equation*}
    This can be achieved if $\delta_1 \le 1/(2C)$ and $\delta_0 = \delta_1^2$. Next, let $u^1, u^2 \in \mathcal{X}_{\delta_1}$ and $w^1 = T(u^1)$ and $w^2 = T(u^2)$. Then, by Corollary \ref{coro.LP.ExistenceUniqueness}, we see that
    \begin{equation*}
    \begin{aligned}
        \| w^1 - w^2 \|_{H^1_{\rm uloc}(D^\e)} & \le C\| u^1\otimes u^1 - u^2\otimes u^2 \|_{L^2_{\rm uloc}(D^\e)} \\
        & \le C(\| u^1\|_{H^1_{\rm uloc}(D^\e)} + \| u^2\|_{H^1_{\rm uloc}(D^\e)} )\| u^1 - u^2 \|_{H^1_{\rm uloc}(D^\e)} \\
        & \le 2C\delta_1 \| u^1 - u^2 \|_{H^1_{\rm uloc}(D^\e)}.
    \end{aligned}
    \end{equation*}
    Hence $T$ is a contraction in $\mathcal{X}_{\delta_1}$ if $\delta_1 < 1/(2C)$. Consequently, under the assumption $\delta_1 < 1/(2C)$ and $\delta_0 = \delta_1^2$, we have a fixed point $w\in \mathcal{X}_{\delta_1}$ such that $w = T(w)$. It is routine to check that, letting $w^0 = 0$ and $w^k = T(w^{k-1})$, we have $w = \lim_{k\to \infty} w^{k}$ and \eqref{est.CMT}.
    \end{proof}

Next, given $u_0 \in H^1_{\rm uloc}(D^\e_0)^3$ satisfying $\nabla\cdot u_0 = 0$, consider the perturbed Navier-Stokes system around $u_0$
\begin{equation}\label{eq.NS.w}
\left\{
\begin{array}{ll}
-\Delta w + w\cdot \nabla w + u_0\cdot \nabla w + w\cdot \nabla u_0 + \nabla \pi = 0 &\mbox{in}\ D^\e_0,\\
\nabla\cdot w = 0&\mbox{in}\ D^\e_0,\\
\Phi^\e[w] = 0,\\
w = 0&\mbox{on}\ \Gamma^\e_0.
\end{array}\right.
\end{equation}
Notice that this system arises when one takes the difference of two solutions $(u_0,p_0), (u_1,p_1)$ of \eqref{eq.NS} with the same flux and puts $w=u_0-u_1$ and $\pi=p_0-p_1$.

\begin{theorem}\label{thm.SVP.w}
    Let $u_0 \in H^1_{\rm uloc}(D_0^\e)^3$ with $\nabla\cdot u_0 = 0$ and $w\in H^1_{\rm loc}(\overline{D_0^\e})^3$ be a weak solution of \eqref{eq.NS.w}. Then there exists $\tau_0 >0$ such that if
    \begin{equation}\label{cond.thm.SVP.w.u0<tau0}
        \| u_0 \|_{H^1_{\rm uloc}(D^\e_0)} \le \tau_0
    \end{equation}
    and
    \begin{equation}\label{cond.thm.SVP.w.t3growth}
        \liminf_{t\to \infty} t^{-3} \int_{D^\e_{0,t} } |\nabla w|^2 = 0,
    \end{equation}
    then there exist $C,c>0$ such that, for all $t\ge0$,
    \begin{equation}\label{est.SVP.w}
    \int_{D^\e_t} |\nabla w|^2 \le C e^{-c t} \int_{D^\e_{0,1}} |\nabla w|^2.
    \end{equation}
    The constants $C,c$ depend on $\| \nabla w \|_{L^2(D^\e_{0,1})}$.
\end{theorem}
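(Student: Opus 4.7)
The plan is to extend the argument of Lemma \ref{lem.exp2bdd} and Theorem \ref{thm.De.SVP} to accommodate the self-interaction $w\cdot\nabla w$. Define $G(s) = \int_{D^\e_0} |\nabla w|^2 \eta(x_1 - s)\dd x$ with the same $\eta$ and $\tilde\eta$ as in the proof of Lemma \ref{lem.exp2bdd}, decompose $G(s)$ into the contribution over the support of $\eta - \tilde\eta$ plus a remainder controlled by $\int_{D^\e_{0,1}}|\nabla w|^2$, and test the equation \eqref{eq.NS.w} against $w(\eta(x_1 - s) - \tilde\eta(x_1))$. The integrals arising from the viscous term, the pressure, the first-order cutoff error, and the linearized convection $u_0\cdot\nabla w + w\cdot\nabla u_0$ are treated exactly as $I_1$--$I_5$ in Lemma \ref{lem.exp2bdd}, invoking the John subdomain $D^*_{s,s+1}$, Poincar\'e's inequality, the pressure--gradient estimate of Lemma \ref{lem.p2uF}, and the smallness \eqref{cond.thm.SVP.w.u0<tau0}.

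The genuinely new contribution comes from $(w\cdot\nabla w)\cdot w = \tfrac12 w\cdot\nabla|w|^2$. Integrating by parts and using $\nabla\cdot w = 0$, the bulk term vanishes and one obtains
\begin{equation*}
I_6 = -\tfrac{1}{2}\int_{D^\e_0} |w|^2 w_1\bigl(\eta'(x_1 - s) - \tilde\eta'(x_1)\bigr)\dd x.
\end{equation*}
By H\"older's inequality, the Sobolev embedding $H^1\hookrightarrow L^6$ on the John subdomain, and Poincar\'e's inequality (using $w|_{\Gamma^\e_0} = 0$), one has
\begin{equation*}
|I_6|\lesssim \|\nabla w\|_{L^2(D^\e_{s-1,s+2})}^3 + \|\nabla w\|_{L^2(D^\e_{0,1})}^3.
\end{equation*}
Observing that $\|\nabla w\|_{L^2(D^\e_{s-1,s+2})}^2 \leq G(s+2) - G(s-2)$, one factors the cubic term as $\|\nabla w\|_{L^2(D^\e_{s-1,s+2})}(G(s+2) - G(s-2))$. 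Combining with $I_1$--$I_5$, the master inequality becomes
\begin{equation*}
G(s)\leq C\tau_0 G(s+2) + C\bigl(1 + \|\nabla w\|_{L^2(D^\e_{s-1,s+2})}\bigr)\bigl(G(s+2) - G(s-2)\bigr) + C_0,
\end{equation*}
where $C_0 = C\|\nabla w\|_{L^2(D^\e_{0,1})}^2 + C\|\nabla w\|_{L^2(D^\e_{0,1})}^3$. Note that the pressure estimate also contributes a quadratic-in-$\nabla w$ term through $\|w\otimes w\|_{L^2}$, but it is absorbed into the same factor $1 + \|\nabla w\|_{L^2(D^\e_{s-1,s+2})}$.

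To close the iteration one performs a bootstrap using the cubic growth hypothesis \eqref{cond.thm.SVP.w.t3growth}. Extract a subsequence $t_k\to\infty$ along which $G(t_k) = o(t_k^3)$. Iterating the master inequality $k$ times as in the proof of Lemma \ref{lem.exp2bdd} yields $G(s)\leq \rho^k G(s + 4k) + C_0/(1-\rho)$ for some $\rho\in(0,1)$ (after absorbing the $\tau_0$-term and restricting $s$ to a range where the nonlinear factor is initially controlled by $\|\nabla w\|_{L^2(D^\e_{0,1})}$). Evaluating along $t_k$ and using $\rho^k t_k^3 \to 0$ delivers the finite-energy bound $\|\nabla w\|_{L^2(D^\e_0)}^2 \lesssim C_0$, which in turn propagates to $\sup_s \|\nabla w\|_{L^2(D^\e_{s-1,s+2})} < \infty$, justifying the bootstrap a posteriori. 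Once finite energy is secured, applying the same argument on every half-pipe $D^\e_t$ gives $H(s) := \int_{D^\e_s} |\nabla w|^2 \leq C(H(s) - H(s+1))$, from which \eqref{est.SVP.w} follows by the elementary iteration used to deduce Theorem \ref{thm.De.SVP} from Lemma \ref{lem.exp2bdd}.

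The hard part is precisely the coupling between absorbing the cubic nonlinearity and closing the Saint-Venant iteration: without an a priori uniform bound on $\|\nabla w\|_{L^2(D^\e_{s-1,s+2})}$, the nonlinear factor can in principle overwhelm the linear viscous dissipation and break the telescoping. The cubic growth assumption \eqref{cond.thm.SVP.w.t3growth} is the sharp threshold ensuring that the term $\rho^k G(s + 4k)$ in the iterated inequality vanishes along a subsequence, and the dependence of the constants $C, c$ in \eqref{est.SVP.w} on $\|\nabla w\|_{L^2(D^\e_{0,1})}$ is a direct consequence of this nonlinear absorption.
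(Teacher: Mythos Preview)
Your overall architecture---treat the cubic convection $(w\cdot\nabla w)\cdot w$ as a boundary-type term supported on $\{\eta'\neq 0\}$, derive a recursive inequality for $G(s)$, and then upgrade finite energy to exponential decay via the $H(s)$ iteration---matches the paper. The gap is in how you close the recursion for $G(s)$.

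You factor $\|\nabla w\|_{L^2(D^\e_{s-1,s+2})}^3$ as $\|\nabla w\|_{L^2(D^\e_{s-1,s+2})}\cdot(G(s+2)-G(s-2))$ and absorb the first factor into the telescoping coefficient, arriving at
\[
G(s)\le C\bigl(1+\|\nabla w\|_{L^2(D^\e_{s-1,s+2})}\bigr)\bigl(G(s+2)-G(s-2)\bigr)+C_0.
\]
This gives an $s$-dependent contraction ratio $\rho_s = C_s/(C_s+1)$ with $C_s\approx 1+\|\nabla w\|_{L^2(D^\e_{s-1,s+2})}$, and the iterated bound becomes $G(s)\le\bigl(\prod_{j}\rho_{s+4j}\bigr)G(s+4k)+\cdots$. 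For this product to beat the growth of $G$ you need an a priori uniform bound on $\|\nabla w\|_{L^2(D^\e_{s-1,s+2})}$, which is precisely what you are trying to prove. Under the sharp cubic growth hypothesis the slice norms can behave like $\|\nabla w\|_{L^2(D^\e_{s-1,s+2})}\sim s$, whence $\rho_s\sim 1-1/s$ and $\prod_{j<k}\rho_{s+4j}$ decays only like $k^{-1/4}$, far too slowly to kill $G(s+4k)=o(k^3)$. Your parenthetical ``restricting $s$ to a range where the nonlinear factor is initially controlled'' does not help: the iteration must reach infinity to invoke \eqref{cond.thm.SVP.w.t3growth}, and the a posteriori justification is circular.

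The paper avoids this by \emph{not} linearizing the cubic term. It keeps the inequality in the form
\[
G(s)\le C_1\bigl(G(s+4)-G(s)\bigr)+C_2\bigl(G(s+4)-G(s)\bigr)^{3/2}+M
\]
with fixed $C_1,C_2$, and then invokes a genuinely nonlinear iteration lemma (Lemma \ref{lem.NonlinearIteration}, due to Pileckas): if a nondecreasing sequence $b_k$ satisfies $b_k\le A(b_{k+1}-b_k)+B(b_{k+1}-b_k)^{3/2}+M$ and $\liminf k^{-3}b_k=0$, then $b_k\le M$ for all $k$. The exponent $3/2$ and the cubic growth threshold are tied together by this lemma; your factoring destroys that link. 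Once finite energy is obtained from Lemma \ref{lem.NonlinearIteration}, the remainder of your argument (the $H(s)$ iteration) is correct and coincides with the paper's.
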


\begin{remark}\label{rem.thm.SVP.w}
    As with Remark \ref{rem.thm.De.SVP}, the similar result with obvious modifications to Theorem \ref{thm.SVP.w} as well as to Lemma \ref{lem.Qubic2bdd} below holds if $D^\e_0$ is replaced by $D^\e_{k}$ or $D^\e_{-\infty, k}$ for any $k\in \R$.
\end{remark}

\begin{lemma}\label{lem.Qubic2bdd}
    Under the assumptions of Theorem \ref{thm.SVP.w}, we have
    \begin{equation}\label{est.Qubic2bdd}
        \int_{D_0^\e} |\nabla w|^2 \le C \int_{D_{0,1}^\e} |\nabla w|^2 + C \bigg( \int_{D_{0,1}^\e} |\nabla w|^2 \bigg)^{3/2}.
    \end{equation}
\end{lemma}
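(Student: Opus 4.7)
The strategy is to adapt verbatim the structure of the proof of Lemma~\ref{lem.exp2bdd}, introducing the nondecreasing function
\begin{equation*}
    G(s) = \int_{D^\e_0} |\nabla w|^2 \eta(x_1-s)\,\dd x, \quad s>0,
\end{equation*}
with the same cut-offs $\eta$ and $\tilde{\eta}$. Testing \eqref{eq.NS.w} by $w(\eta(x_1-s)-\tilde{\eta}(x_1))$ and integrating by parts reproduces the five pieces $I_1,\ldots,I_5$ exactly as before, together with one new contribution
\begin{equation*}
    I_0 = -\int_{D^\e_0} (w\cdot\nabla w)\cdot w\,\bigl(\eta(x_1-s)-\tilde{\eta}(x_1)\bigr)\,\dd x
\end{equation*}
from the convective term $w\cdot\nabla w$. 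The intermediate John subdomain $D^*_{s,s+1}$ with $D^\e_{s,s+1}\subset D^*_{s,s+1}\subset D^\e_{s-1,s+2}$ built in Lemma~\ref{lem.exp2bdd} is reused without modification.

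The key new point is the appearance of cubic-in-$w$ contributions. Using the identity $(w\cdot\nabla w)\cdot w = \frac{1}{2} w\cdot\nabla|w|^2$, integration by parts (which is permissible because $\nabla\cdot w=0$ and $w|_{\Gamma^\e}=0$), and finally the Sobolev-Poincar\'e embedding $H^1_0\hookrightarrow L^3$ on a 3D John domain of unit diameter, one obtains
\begin{equation*}
    |I_0|\lesssim \int_{D^\e_{s,s+1}}|w|^3 + \int_{D^\e_{0,1}}|w|^3 \lesssim \|\nabla w\|_{L^2(D^\e_{s-1,s+2})}^3 + \|\nabla w\|_{L^2(D^\e_{0,1})}^3.
\end{equation*}
The pressure term $I_3$ also acquires a new piece: the source $F$ in $-\Delta w+\nabla\pi=\nabla\cdot F$ now contains $-w\otimes w$, and Lemma~\ref{lem.p2uF} on $D^*_{s,s+1}$ combined with $\|w\otimes w\|_{L^2}=\|w\|_{L^4}^2\lesssim\|\nabla w\|_{L^2}^2$ (Gagliardo-Nirenberg with zero trace in 3D) adds, after Cauchy-Schwarz against $u_1$, another cubic contribution of the same form. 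The estimates for $I_1,I_2,I_4,I_5$ and the linear part of $I_3$ carry over unchanged.

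Summing everything and absorbing the $C\tau_0 G(s)$ term by choosing $\tau_0$ small produces the central inequality
\begin{equation*}
    G(s) \leq C\bigl(G(s+2)-G(s-2)\bigr) + C\bigl(G(s+2)-G(s-2)\bigr)^{3/2} + CM + CM^{3/2},
\end{equation*}
with $M = \int_{D^\e_{0,1}}|\nabla w|^2$. Rearranging as in Lemma~\ref{lem.exp2bdd} (using $G(s-2)\le G(s)$) and iterating $s\mapsto s+4$ gives, for $\rho=C/(C+1)\in(0,1)$,
\begin{equation*}
    G(s)\leq \rho^k G(s+4k) + C\sum_{j=0}^{k-1}\rho^j\bigl(G(s+4(j+1))-G(s+4j)\bigr)^{3/2} + \frac{CM+CM^{3/2}}{1-\rho}.
\end{equation*}
The growth hypothesis \eqref{cond.thm.SVP.w.t3growth} is then invoked to extract a subsequence of $k$ along which $\rho^k G(s+4k)\to 0$; the cubic $t^{3}$ threshold is dictated precisely by the $3/2$-power nonlinear correction (in contrast to the exponential growth tolerance of the linear Lemma~\ref{lem.exp2bdd}). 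Sending $s\to 0$ after the limit in $k$ yields the stated bound.

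\textbf{Main obstacle.} The main difficulty is closing the nonlinear iteration: the term $C(G(s+4)-G(s))^{3/2}$ cannot be absorbed by a simple shift of constants as in the linear case, and the summability of $\sum\rho^j(G(s_{j+1})-G(s_j))^{3/2}$ along the chosen subsequence must be checked using the growth hypothesis together with the monotonicity of $G$. This requires that the constants $C,c$ in the conclusion be allowed to depend on $\|\nabla w\|_{L^2(D^\e_{0,1})}$, as the statement explicitly permits; without such a qualitative smallness one could not guarantee that the cubic increments remain subcritical throughout the iteration.
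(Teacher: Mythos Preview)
Your derivation of the recursive inequality
\[
    G(s) \le C\bigl(G(s+4)-G(s)\bigr) + C\bigl(G(s+4)-G(s)\bigr)^{3/2} + CM + CM^{3/2}
\]
is correct and matches the paper exactly, including the treatment of the new cubic term $I_0$ and the extra $\|w\otimes w\|_{L^2}$ contribution in the pressure estimate. The gap is in the final step: your direct iteration does not close.

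Writing $d_j = G(s+4(j+1))-G(s+4j)$, your scheme yields
\[
    G(s)\le \rho^k G(s+4k) + C\sum_{j=0}^{k-1}\rho^j d_j^{3/2} + \frac{M_0}{1-\rho},
\]
and while $\rho^k G(s+4k)\to 0$ along a subsequence is fine, the sum $\sum_j \rho^j d_j^{3/2}$ is \emph{not} controlled by $M$ from the hypotheses alone. Monotonicity and $\liminf k^{-3}G(4k)=0$ give only $d_j\le G(4k_n)=o(k_n^3)$ for $j<k_n$, which bounds the partial sum by $o(k_n^{9/2})/(1-\rho)\to\infty$; no refinement of H\"older or Abel summation rescues this. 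Your closing remark that the constants may depend on $\|\nabla w\|_{L^2(D^\e_{0,1})}$ conflates Lemma~\ref{lem.Qubic2bdd} with Theorem~\ref{thm.SVP.w}: the lemma's constant must be universal, and the explicit form $CM+CM^{3/2}$ is used in the proof of Theorem~\ref{thm.SVP.w} to linearize $(H(s)-H(s+1))^{3/2}\le M^{1/2}(H(s)-H(s+1))$.

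The paper closes the recursion differently: it invokes a black-box nonlinear iteration lemma (Lemma~\ref{lem.NonlinearIteration}, due to Pileckas) asserting that if a nondecreasing nonnegative sequence $\{b_k\}$ satisfies $b_k\le A(b_{k+1}-b_k)+B(b_{k+1}-b_k)^{3/2}+M$ and $\liminf k^{-3}b_k=0$, then $b_k\le M$ for all $k$. The mechanism behind that lemma is a blow-up argument (if $b_1>M$ then the recursion forces $b_{k+1}-b_k\gtrsim b_k^{2/3}$ eventually, hence $b_k\gtrsim k^3$, contradicting the hypothesis), which is fundamentally different from the geometric-series iteration that works in the linear Lemma~\ref{lem.exp2bdd}. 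You need either to cite this lemma or to reproduce the blow-up argument; the forward iteration you wrote cannot substitute for it.
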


\begin{proof}
    Let $\eta, \tilde{\eta}$ be the same as in the proof of Lemma \ref{lem.exp2bdd} and define 
    \begin{equation}
        G(s) = \int_{D^\e_0} |\nabla w|^2 \eta(x_1 -s) \dd x, \quad s>0.
    \end{equation}
    Using the system \eqref{eq.NS.w} and the integration by parts, we obtain
    \begin{equation}\label{eq.Gs4w}
    \begin{aligned}
        G(s) 
        & = \int_{D^\e_0} |\nabla w|^2 (\eta(x_1-s)-\tilde{\eta}(x_1)) \dd x + \int_{D^\e_0} |\nabla w|^2 \tilde{\eta}(x_1) \dd x \\
        & = 
        - \int_{D^\e_0} (w\cdot \nabla w 
        + u_0\cdot \nabla w + w\cdot \nabla u_0)\cdot w (\eta(x_1-s)-\tilde{\eta}(x_1)) \dd x  \\
        &\quad 
        + \int_{D^\e_0} \pi w_1 (\eta'(x_1 - s) - \tilde{\eta}'(x_1)) \dd x \\
        &\quad 
        - \int_{D^\e_0} w\cdot \partial_1 w (\eta'(x_1 - s) - \tilde{\eta}'(x_1)) \dd x + \int_{D^\e_0} |\nabla w|^2 \tilde{\eta}(x_1) \dd x.
    \end{aligned}
    \end{equation}

    The estimate for $G(s)$ is the same as in the proof of Lemma \ref{lem.exp2bdd} (see \eqref{est.Gs+}), except for the terms involving the nonlinearity $(w\cdot \nabla w)\cdot w$ or the pressure. In fact, it follows from $(w\cdot \nabla w)\cdot w = (1/2) w\cdot \nabla(|w|^2)$ and $\nabla\cdot w = 0$, we have
    \begin{equation}\label{est.Gs-4}
    \begin{aligned}
        & \bigg| \int_{D^\e_0} (w\cdot \nabla w)\cdot w (\eta(x_1-s)-\tilde{\eta}(x_1)) \dd x \bigg| \\
        & = \bigg| \int_{D^\e_0}  \frac12 w_1 |w|^2 (\eta'(x_1 - s) - \tilde{\eta}'(x_1)) \dd x \bigg| \\
        & \le C\bigg\{ \bigg( \int_{D^\e_{s,s+1}} |\nabla w|^2 \bigg)^{3/2} + \bigg( \int_{D^\e_{0,1}} |\nabla w|^2 \bigg)^{3/2} \bigg\}.
    \end{aligned}
    \end{equation}
    For the term involving the pressure, we apply the similar argument as $I_3$ in Lemma \ref{lem.exp2bdd}. Let $D_{s,s+1}^*$ be the John domain constructed in Lemma \ref{lem.exp2bdd}.
    Recall that $(w,\pi)$ satisfies $-\Delta w + \nabla \pi = \nabla\cdot F$ with $F = -(w \otimes w + u_0\otimes w + w\otimes u_0)$ in $D^*_{s,s+1}$. Let $\bar{\pi} = \dashint_{D^*_{s,s+1}} \pi$. Then by Lemma \ref{lem.p2uF}, we have
    \begin{equation*}
    \begin{aligned}
        & \| \pi - \bar{\pi}\|_{L^2(D_{s,s+1}^*)} \\
        & \le C(\| \nabla w \|_{L^2(D_{s,s+1}^*)} + \| w\otimes w + u_0\otimes w + w \otimes u_0 \|_{L^2(D_{s,s+1}^*)} ) \\
        & \le C(\| \nabla w \|_{L^2(D_{s,s+1}^*)} + \| \nabla w \|_{L^2(D_{s-1,s+2}^\e)}^2 + \| u_0 \|_{H^1(D^\e_{s-1,s+2})} \| \nabla w \|_{L^2(D_{s-1,s+2}^\e)} ) \\
        & \le C\| \nabla w \|_{L^2(D_{s-1,s+2}^\e)} + C\| \nabla w \|_{L^2(D_{s-1,s+2}^\e)}^2.
    \end{aligned}
    \end{equation*}
    This yields
    \begin{equation}
    \begin{aligned}
        \bigg| \int_{D^\e_0} \pi w_1 \eta'(x_1 - s) \dd x  \bigg| & = \bigg| \int_{D^\e_0} (\pi-\bar{\pi}) w_1 \eta'(x_1 - s) \dd x  \bigg|
        \\ & \le C \bigg\{ \int_{D_{s-1,s+2}^\e} |\nabla w|^2 + \bigg( \int_{D_{s-1,s+2}^\e} |\nabla w|^2 \bigg)^{3/2} \bigg\}.
    \end{aligned}
    \end{equation}
    Similarly, one has
    \begin{equation}
        \bigg| \int_{D^\e_0} \pi w_1 \tilde{\eta}'(x_1) \dd x \bigg| \le C\bigg\{ \int_{D_{0,1}^\e} |\nabla w|^2 + \bigg( \int_{D_{0,1}^\e} |\nabla w|^2 \bigg)^{3/2} \bigg\}.
    \end{equation}
    As a result, we have
    \begin{equation}\label{est.Gs-5}
    \begin{aligned}
        G(s) & \le C\tau_0 G(s) + C\int_{D^\e_{s-1,s+2}} |\nabla w|^2 + C \int_{D^\e_{0,1}} |\nabla w|^2 \\
        & \quad + C\bigg\{ \bigg( \int_{D^\e_{s,s+1}} |\nabla w|^2 \bigg)^{3/2} + \bigg( \int_{D^\e_{0,1}} |\nabla w|^2 \bigg)^{3/2} \bigg\}.
    \end{aligned}
    \end{equation}

    Set
    \begin{equation}\label{def.M}
        M = C \int_{D_{0,1}^\e} |\nabla w|^2 + C \bigg( \int_{D_{0,1}^\e} |\nabla w|^2 \bigg)^{3/2}.
    \end{equation}
    From the same manner as in the proof of Lemma \ref{lem.exp2bdd}, we see that, for $s>2$,
    \begin{equation*}
        G(s) \le C\tau_0 G(s) + C(G(s+2) - G(s-2)) + C(G(s+2) - G(s-2))^{3/2} + M.
    \end{equation*}
    Assume that $\tau_0>0$ is small so that $C\tau_0\le 1/2$. Then we have
    \begin{equation*}
        G(s-2) \le G(s) \le C(G(s+2) - G(s-2)) + C(G(s+2) - G(s-2))^{3/2} + M.
    \end{equation*}
    Since $s>2$ is arbitrary, we replace $s-2$ by $s$ to see that
    \begin{equation}\label{est.Gs.iteration}
        G(s) \le C_1 (G(s+4) - G(s)) + C_2 (G(s+4) - G(s))^{3/2} + M.
    \end{equation}
    The assumption \eqref{cond.thm.SVP.w.t3growth} implies $\liminf_{k\to \infty} k^{-3} G(4k) = 0$. Thus, by Lemma \ref{lem.NonlinearIteration} below (with $b_k = G(4k)$), we have $G(4k) \le M$ for all $k\ge 1$ and the desired estimate \eqref{est.Qubic2bdd}.
\end{proof}

\begin{lemma}[ {\cite[Lemma 4.1]{P02}} ] \label{lem.NonlinearIteration}
    Let $\{ b_k \}_{k\ge  1}$ be a nondecreasing nonnegative sequence. Suppose that there are constants $A,B$ and $M$, such that, for all $k\ge 1$,
    \begin{equation}
        b_k \le A(b_{k+1} - b_k) + B(b_{k+1} - b_k)^{3/2} + M.
    \end{equation}
    and
    \begin{equation}
        \liminf_{k\to \infty} k^{-3} b_k = 0.
    \end{equation}
    Then $b_k \le M$ for all $k\ge 1$.
\end{lemma}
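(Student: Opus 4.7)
The plan is to proceed by contradiction, assuming $b_{k_0}>M$ for some $k_0\ge 1$ and deriving a contradiction with the decay hypothesis $\liminf_{k\to\infty} k^{-3}b_k=0$. Writing $c_k=b_k-M$ and $d_k=c_{k+1}-c_k\ge 0$, the hypothesis reads $c_k\le \phi(d_k):=A d_k+B d_k^{3/2}$. By the monotonicity of $b_k$, one has $c_k\ge c_{k_0}>0$ for every $k\ge k_0$, and since $\phi:[0,\infty)\to[0,\infty)$ is strictly increasing the inequality can be inverted to $d_k\ge \phi^{-1}(c_k)>0$.

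The first substep is to observe that $c_k\to\infty$. Otherwise the nondecreasing sequence $c_k$ would converge to a finite limit $L\ge c_{k_0}>0$, forcing $d_k\to 0$ and hence $\phi^{-1}(c_k)\to 0$, which by continuity of $\phi^{-1}$ contradicts $c_k\ge c_{k_0}>0$. Consequently there exists $k_1\ge k_0$ with $c_k\ge A+B$ for all $k\ge k_1$. In this regime, the elementary estimates $\phi(d)\le(A+B)d^{3/2}$ for $d\ge 1$ and $\phi(d)\le(A+B)d$ for $d\le 1$ invert to $d_k\ge \eta\, c_k^{2/3}$ with $\eta=(A+B)^{-2/3}$, giving the key recurrence
\begin{equation*}
    c_{k+1}\ge c_k+\eta\, c_k^{2/3},\qquad k\ge k_1.
\end{equation*}

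The final step converts this recurrence into cubic growth of $c_k$. Setting $h_k=c_k^{1/3}$ and using the identity $h_{k+1}-h_k=\tfrac{1}{3}\int_{c_k}^{c_{k+1}}s^{-2/3}\,ds$, I would split into two cases: (i) if $c_{k+1}\le 2c_k$, the monotonicity of $s\mapsto s^{-2/3}$ together with the recurrence yields $h_{k+1}-h_k\ge \tfrac{1}{3}(c_{k+1}-c_k)c_{k+1}^{-2/3}\ge (\eta/3)\,2^{-2/3}$; (ii) if $c_{k+1}>2c_k$, then directly $h_{k+1}-h_k\ge (2^{1/3}-1)h_k\ge(2^{1/3}-1)(A+B)^{1/3}$. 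In both cases $h_{k+1}-h_k\ge \eta''>0$ for a constant $\eta''$ depending only on $A,B$, so telescoping gives $h_k\ge \eta''(k-k_1)$ and hence $c_k\ge(\eta'')^3(k-k_1)^3$ for $k\ge k_1$. This implies $\liminf_{k\to\infty}k^{-3}b_k\ge(\eta'')^3>0$, contradicting the hypothesis, and completes the proof.

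I expect the main technical obstacle to be this last cubic-growth step: the naive Euler comparison with the continuous ODE $y'=\eta y^{2/3}$ goes the wrong way, since $y^{2/3}$ is concave and the forward Euler scheme underestimates the true solution. The two-case split by the relative size of the jump $c_{k+1}/c_k$ is what makes the argument work, handling both the ``mild" growth regime (via the integral bound on $h_{k+1}-h_k$) and the ``large single jump" regime (via direct use of cube-root concavity) on equal footing and producing a uniform positive increment for $h_k$.
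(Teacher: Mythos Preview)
Your argument is correct. The paper does not prove this lemma at all: it is quoted verbatim from \cite[Lemma~4.1]{P02} and invoked as a black box in the proof of Lemma~\ref{lem.Qubic2bdd}, so there is no in-paper proof to compare against. Your contradiction argument --- inverting $c_k\le A d_k+Bd_k^{3/2}$ to obtain $d_k\ge\eta\,c_k^{2/3}$ once $c_k\ge A+B$, and then extracting a uniform positive increment for $h_k=c_k^{1/3}$ via the two-case split on $c_{k+1}/c_k$ --- is clean and self-contained, and the resulting cubic lower bound $c_k\gtrsim (k-k_1)^3$ contradicts $\liminf k^{-3}b_k=0$ as required. One cosmetic remark: in the step ``$c_k\to\infty$'' you can shortcut the continuity argument by noting directly that $d_k\ge\phi^{-1}(c_{k_0})>0$ for all $k\ge k_0$, which already contradicts $d_k\to 0$; but your version via the squeeze $\phi^{-1}(c_k)\le d_k\to 0$ is also fine.
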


Now we prove Theorem \ref{thm.SVP.w}.

\begin{proofx}{Theorem \ref{thm.SVP.w}}
    By Lemma \ref{lem.Qubic2bdd}, we see that, for any $s\ge0$,
    \begin{equation}\label{est.Dw.Qs}
        \| \nabla w \|_{L^2(D^\e_{s,s+1})}
        \le
        \| \nabla w \|_{L^2(D^\e_{0})}
        \le
        M,
    \end{equation}
    where $M$ is defined in \eqref{def.M}. Let
    \begin{equation*}
        H(s) = \int_{D^\e_s} |\nabla w|^2,
        \quad s\ge0.
    \end{equation*}
    By Lemma \ref{lem.Qubic2bdd} and Remark \ref{rem.thm.SVP.w} with \eqref{est.Dw.Qs}, putting $C_M = C(1+M^{1/2})$, we have
    \begin{equation*}
        H(s) \le C_M ( H(s) - H(s+1)),
    \end{equation*}
    which gives
    \begin{equation*}
        H(s+1) \le \frac{C_M - 1}{C_M} H(s).
    \end{equation*}
    Since $(C_M-1)/C_M < 1$, a simple iteration leads to \eqref{est.SVP.w} with $c = -\log((C_M-1)/C_M)$.
\end{proofx}

Theorem \ref{thm.SVP.w} implies the uniqueness of small solutions of the Navier-Stokes system \eqref{eq.NS}. The following existence and uniqueness results will be proved later in Section \ref{prf.thma} after the existence of solutions to \eqref{eq.NS} with small flux is established.

\begin{theorem}\label{thm.NS.uniqueness}
    There exists sufficiently small $\phi_0>0$ such that for all $|\phi|<\phi_0$, the system \eqref{eq.NS} has a unique solution $u^\e \in H^1_{\rm uloc}(D^\e)^3$.
\end{theorem}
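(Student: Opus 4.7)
The plan is to combine an approximation-based construction of a small-norm solution with the Saint-Venant principle of Theorem \ref{thm.SVP.w} to deduce uniqueness among all $H^1_{\rm uloc}$ solutions. Proposition \ref{prop.NP.existence} will do the work for existence, while Theorem \ref{thm.SVP.w} will do the bulk of the uniqueness argument.

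\textbf{Existence.} I would introduce the approximation $u^\e_{\rm app} = u^0 - (2\phi/\pi) v^\e_{\rm bl}$ from Section \ref{outline}, where $v^\e_{\rm bl}$ is the normalized boundary layer of Section \ref{sec.bl}. By the deterministic size estimates for the boundary layer, $\|u^\e_{\rm app}\|_{H^1_{\rm uloc}(D^\e)} \lesssim |\phi|$. A direct calculation, using that $-\Delta u^0 + u^0\cdot\nabla u^0 + \nabla p^0 = 0$ in $D^0$ and that $v^\e_{\rm bl}$ solves a Stokes system, shows that
\[
    -\Delta u^\e_{\rm app} + u^\e_{\rm app}\cdot\nabla u^\e_{\rm app} + \nabla p^\e_{\rm app} = \nabla\cdot F
\]
for a suitable pressure $p^\e_{\rm app}$ and a tensor $F$ assembled from $u^0\otimes v^\e_{\rm bl}$, $v^\e_{\rm bl}\otimes u^0$ and $v^\e_{\rm bl}\otimes v^\e_{\rm bl}$ (obtained by rewriting each quadratic cross term as $u\cdot\nabla v = \nabla\cdot(u\otimes v)$ for divergence-free $u$). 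The size estimates on $v^\e_{\rm bl}$ give $\|F\|_{L^2_{\rm uloc}(D^\e)} \lesssim |\phi|^2$. Writing $u^\e = u^\e_{\rm app} + w$, the unknown $w$ satisfies the system \eqref{eq.NS+u0} with $u_0 = u^\e_{\rm app}$ and source $-\nabla\cdot F$. For $|\phi| < \phi_0$ sufficiently small, the hypotheses of Proposition \ref{prop.NP.existence} are met, producing $w \in H^1_{\rm uloc}(D^\e)^3$ with $\|\nabla w\|_{L^2_{\rm uloc}(D^\e)} \lesssim |\phi|^2$. The resulting solution $u^\e_{\star} := u^\e_{\rm app} + w$ of \eqref{eq.NS} enjoys the a priori bound $\|u^\e_{\star}\|_{H^1_{\rm uloc}(D^\e)} \lesssim |\phi|$.

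\textbf{Uniqueness.} Let $u^\e$ be any solution of \eqref{eq.NS} in $H^1_{\rm uloc}(D^\e)^3$ and set $v = u^\e - u^\e_{\star}$. Then $\Phi^\e[v] = 0$, $v = 0$ on $\Gamma^\e$, and $v$ satisfies the perturbed system \eqref{eq.NS.w} with $u_0 = u^\e_{\star}$. Choose $\phi_0$ small enough so that $\|u^\e_{\star}\|_{H^1_{\rm uloc}(D^\e)} \le \tau_0$; we may then invoke Theorem \ref{thm.SVP.w} on every half-pipe $D^\e_{k_0}$ via Remark \ref{rem.thm.SVP.w}. The growth hypothesis \eqref{cond.thm.SVP.w.t3growth} is automatic since $v \in H^1_{\rm uloc}(D^\e)^3$ gives
\[
    \int_{D^\e_{k_0, k_0+t}} |\nabla v|^2 \le C t\,\|v\|^2_{H^1_{\rm uloc}(D^\e)},
\]
whose $t^{-3}$ multiple tends to zero. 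Theorem \ref{thm.SVP.w} then delivers, for every fixed $t \in \R$ and every $k_0 < t$,
\[
    \int_{D^\e_t} |\nabla v|^2 \le C e^{-c(t-k_0)} \|v\|^2_{H^1_{\rm uloc}(D^\e)}.
\]
Letting $k_0 \to -\infty$ yields $\int_{D^\e_t} |\nabla v|^2 = 0$ for every $t \in \R$. Together with the zero boundary trace of $v$ and a Poincar\'e inequality on each John sub-domain from Definition \ref{def.John2}, this forces $v \equiv 0$, so $u^\e = u^\e_{\star}$.

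\textbf{Main obstacle.} The principal technical difficulty is not the structure of this argument but the existence and size estimates for $v^\e_{\rm bl}$ developed in Section \ref{sec.bl}, which rest on the Green's function analysis, the large-scale regularity theory over boundaries of John type, and the Saint-Venant decay established earlier in this section. Once those are in hand, the existence step reduces to a fixed-point invocation of Proposition \ref{prop.NP.existence} and the uniqueness step follows essentially mechanically from Theorem \ref{thm.SVP.w} together with the a priori smallness of $u^\e_{\star}$.
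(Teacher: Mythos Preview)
Your proposal is correct and follows essentially the same route as the paper: existence via the ansatz $u^\e = u^\e_{\rm app} + w$ with Proposition \ref{prop.NP.existence} (this is exactly Proposition \ref{prop.ueapp.err1}), and uniqueness by applying the nonlinear Saint-Venant principle Theorem \ref{thm.SVP.w} to the difference of two solutions, exploiting the smallness $\|u^\e_\star\|_{H^1_{\rm uloc}}\lesssim |\phi|$. The only cosmetic difference is that the paper phrases the final step as in Corollary \ref{coro.Stokes.uniqueness} (bounding a fixed slice by a distant one and letting the distance tend to infinity), while you fix $t$ and send $k_0\to -\infty$; both are equivalent, and your observation that the constants $C,c$ in Theorem \ref{thm.SVP.w} are uniform in $k_0$ because $\|\nabla v\|_{L^2(D^\e_{k_0,k_0+1})}\le \|v\|_{H^1_{\rm uloc}}$ is the point that makes this work.
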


    \subsection{Large-scale boundary regularity}\label{LSLipschitz}
The regularity estimates play important roles in constructing and estimating the Green's function and boundary layers. In \cite{Ger09}, to obtain the quantitative sensitivity estimate of the boundary layers with respect to the boundary perturbation, G\'{e}rard-Varet established the uniform H\"{o}lder estimate for the Green's function over bumpy $C^{1}$ boundaries essentially by the compactness method originating from Avellaneda-Lin \cite{AL87}. The uniform Lipschitz estimate via compactness method over bumpy boundaries were developed in \cite{KP15,KP18} for elliptic equations in homogenization setting and \cite{HP20} for steady Navier-Stokes system. In a seires of papers \cite{Z21,GZ22,HZ23,HPZ-21}, a quantitative method originating from \cite{AS16} in elliptic homogenization was generalized to obtain the large-scale Lipschitz estimate and even higher order regularity over oscillating boundaries with minimal regularity assumption on the boundaries (which allows for fractals and cusps). Particularly in \cite{HPZ-21}, the large-scale boundary Lipschitz estimate was established for the Navier-Stokes system (including Stokes system) over a rough hyperplane of John type. The large-scale Lipschitz estimate in \cite{HPZ-21} can be generalized to the case of bumpy cylinders considered in this paper in the same spirit of \cite{Z21,GZ22} for curved boundaries. Recall that we do not need the boundary layers in the proof of large-scale Lipschitz estimate.

In principle, the regularity of solutions exhibits different behaviors distinguished between interior points and points close to the rough boundary. Recall that $r_x = (x_2^2 + x_3^2)^{1/2}$ is the distance from $x$ to the $x_1$-axis. Recall that $D_1\subset D^\e \subset D_{1+\e}$ and assume $B_{r,+}(x) = D^\e \cap B_r(x)$. Suppose that $(u,p)$ satisfies
\begin{equation}\label{eq.localStokes}
\left\{
\begin{array}{ll}
-\Delta u+\nabla p= 0 &\mbox{in}\ B_{1,+}(x),\\
\nabla\cdot u=0&\mbox{in}\ B_{1,+}(x),\\
u = 0&\mbox{on}\ \Gamma^\e\cap B_1(x).
\end{array}\right.
\end{equation}
Then we describe two cases:
\begin{itemize}
    \item For $x\in D^\e$ with $r_x>1-\e$, since the point $x$ is very close to the boundary, we can only show a large-scale regularity for the solutions of the Stokes system. That is, for any $\e<r<1/2$, we have
    \begin{equation}\label{est.bdry.Lip}
        \bigg( \dashint_{B_{r,+}(x)} |\nabla u|^2 \bigg)^{1/2} + \bigg( \dashint_{B_{r,+}(x)} |p - \dashint_{B_{1/2,+}(x)} p |^2 \bigg)^{1/2} \le C \bigg( \dashint_{B_{1,+}(x)} |\nabla u|^2 \bigg)^{1/2},
    \end{equation}
    where $C$ is independent of $\e$ and $r$.
    
    \item For $x\in D^\e $ with $r_x<1-\e$, we can apply a pointwise interior estimate followed by a large-scale estimate in the first case if the region intersects with the boundary to get an estimate like
    \begin{equation}\label{est.int.Lip}
        |\nabla u(x)| + |p(x) - \dashint_{B_{1/2,+}(x)} p| \le C \bigg( \dashint_{B_{1,+}(x)} |\nabla u|^2 \bigg)^{1/2}.
    \end{equation}
    
\end{itemize}
In the above estimates, it does not matter how $B_{r,+}(x)$ intersects the boundary. We refer to \cite[Theorem A]{HPZ-21} for a proof of \eqref{est.bdry.Lip}. Some straightforward modifications are needed as the system \eqref{eq.localStokes} is linear and the boundary is a perturbation of a smooth curved surface, while the system considered in \cite{HPZ-21} is nonlinear and the boundary is a perturbation of a plane.

    \subsection{Green's function of Stokes system}\label{Green}

One of the main ingredients in this paper is the Green's function.
Formally, the Green's function (matrix-valued) is the solution of, denoting by ,
\begin{equation}\label{eq.GreenFunc}
\left\{
\begin{array}{ll}
-\Delta G(\cdot,y)+\nabla \Pi(\cdot,y)= \delta_y I&\mbox{in}\ D^\e,\\
\nabla\cdot G(\cdot,y)=0&\mbox{in}\ D^\e,\\
\Phi^\e[G(\cdot,y)] = 0, & \\
G(\cdot,y) = 0&\mbox{on}\ \Gamma^\e
\end{array}\right.
\end{equation}
for each $y\in D^\e$, where $\delta_y$ denotes the Dirac delta supported at $y$. The flux condition
$$ \Phi^\e[G(\cdot,y)] = \int_{\Sigma^\e_t} e_1\cdot G(\cdot, y) = 0 $$
ensures the uniqueness. Otherwise, even in the case of $D^0$, one can add arbitrary Hagen-Poiseuille flow \eqref{eq.HPflow1} to the Green's function. Due to the flux condition, the Saint-Venant's principle applies to the Green's function. 

In terms of continuity with respect to $y$ variable, we need the following condition: suppose that $f\in C_0^\infty(D^\e)^3$ and let $(u,p)$ be the weak solution of
\begin{equation}\label{eq.force=f}
    \left\{
    \begin{array}{ll}
    -\Delta u+\nabla p= f&\mbox{in}\ D^\e,\\
    \nabla\cdot u=0&\mbox{in}\ D^\e,\\
    \Phi^\e[u] = 0, & \\
    u = 0&\mbox{on}\ \Gamma^\e.
    \end{array}\right.
\end{equation}
Then we have
\begin{equation}\label{eq.GreenFormula}
    u(x) = \int_{D^\e} G(x,y)f(y) \dd y, \qquad p(x) = \int_{D^\e} \Pi(x,y)\cdot f(y) \dd y.
\end{equation}
Moreover, it can be shown that $G(x,y)$ satisfies the symmetry property $G(x,y) = G(y,x)^T$ for any $x,y\in D^\e$.

Applying the large-scale estimates to the Green's function, we have the following theorem. Recall that $\delta(x) = 1-r_x$ for $x\in D^0 = \{ r<1 \}$. We use $\nabla_x G(\cdot,\cdot)$ and $\nabla_y G(\cdot,\cdot)$ to denote the derivatives with respect to the first and second variables of $G(\cdot,\cdot)$, respectively.
    \begin{theorem}\label{thm.Green}
    For $x\in D^\e$ with $r_x\le 1-\e$, the following hold.
    \begin{enumerate}[(1)]
    \item\label{item1.thm.Green}
    For $y\in D^\e$ with $r_y\le 1-\e$, we have
    \begin{equation}\label{est.G.interior}
        |G(x,y)| \le Ce^{-c |x_1-y_1|} \min \bigg\{ \frac{1}{|x-y|}, \frac{\delta(y)}{|x-y|^2}, \frac{\delta(x)}{|x-y|^2}, \frac{\delta(x)\delta(y)}{|x-y|^3} \bigg\}
    \end{equation}
    and 
    \begin{equation}\label{est.Green-2}
        |\nabla_x G(x,y)| \le Ce^{-c |x_1-y_1|} \min \bigg\{ \frac{1}{|x-y|^2}, \frac{\delta(y)}{|x-y|^3} \bigg\}.
    \end{equation}
    \item\label{item2.thm.Green}
    For $y\in D^\e$ with $r_y>1-\e$ and $|x-y|>10\e$, we have
    \begin{equation}\label{est.G.bdry}
        \bigg( \dashint_{B_{\e,+}(y)} |G(x,\cdot)|^2 \bigg)^{1/2} 
        \le Ce^{-c |x_1-y_1|} \min \bigg\{ \frac{\e}{|x-y|^2},  \frac{\delta(x) \e}{|x-y|^3} \bigg\}
    \end{equation}
    and
    \begin{equation}\label{est.Grenn-4}
    \begin{aligned}
        \bigg( \dashint_{B_{\e,+}(y)} |\nabla_x G(x,\cdot)|^2 \bigg)^{1/2} 
        \le Ce^{-c |x_1-y_1|}  \frac{\e}{|x-y|^3}.
    \end{aligned}
    \end{equation}
    Moreover, we have
    \begin{equation}\label{est.DyG.bdry}
        \bigg( \dashint_{B_{\e,+}(y)} |\nabla_y G(x,\cdot)|^2 \bigg)^{1/2} \le C e^{-c |x_1-y_1|} \frac{\delta(x)}{|x-y|^3}.
    \end{equation}
    \end{enumerate}
    \end{theorem}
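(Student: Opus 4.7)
The plan is to combine three ingredients in succession: (i) the Saint-Venant principle of Section~\ref{SVP}, which produces the exponential prefactor $e^{-c|x_1-y_1|}$; (ii) the large-scale boundary Lipschitz estimates \eqref{est.int.Lip} and \eqref{est.bdry.Lip} of Section~\ref{LSLipschitz}, which allow one to pass between $L^2$ averages and pointwise (or smaller-scale) controls through the rough boundary; and (iii) the symmetry $G(x,y)=G(y,x)^T$, which lets us trade the roles of $x$ and $y$. The Green's function itself is constructed by duality from Corollary~\ref{coro.LP.ExistenceUniqueness} applied to \eqref{eq.force=f} with $u_0=0$, and a Caccioppoli argument on annuli around $y$, closed using Saint-Venant decay to justify integrability at infinity, yields the basic global energy bound $\int_{D^\e\setminus B_r(y)}|\nabla G(\cdot,y)|^2\lesssim r^{-1}$ for $0<r\le 1/2$.

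The polynomial part of \eqref{est.G.interior} is extracted as follows. With $R=|x-y|/4$, the function $G(\cdot,y)$ solves the homogeneous Stokes system in $B_R(x)$, so \eqref{est.int.Lip} (if $B_R(x)$ stays away from $\Gamma^\e$) or the rescaled \eqref{est.bdry.Lip} (otherwise) gives $|G(x,y)|^2\lesssim R^{-3}\int_{B_R(x)}|G(\cdot,y)|^2$, and a Poincar\'e-Sobolev argument combined with the energy bound upgrades this to $|G(x,y)|\lesssim 1/|x-y|$. The boundary-distance improvements follow from the vanishing of $G$ on $\Gamma^\e$: bridging radially from $\Gamma^\e$ to $x$ along a segment of length $\delta(x)$ controlled by the gradient bound below yields the factor $\delta(x)/|x-y|^2$, and symmetry yields the $\delta(y)$ version. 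Applying both improvements on disjoint subregions of the annulus produces the product $\delta(x)\delta(y)/|x-y|^3$. The exponential factor comes from Theorem~\ref{thm.De.SVP} and Remark~\ref{rem.thm.De.SVP} applied with $u_0=0$ to $G(\cdot,y)$ on the half-pipes $\{x_1>y_1+1\}$ and $\{x_1<y_1-1\}$, where $G(\cdot,y)$ is a zero-flux Stokes flow; the size bounds above are then rerun with this exponentially-localized $L^2$ energy in place of the crude one. The gradient bounds \eqref{est.Green-2} follow from the pointwise interior estimate \eqref{est.int.Lip}, which converts the $L^2$ control on $G(\cdot,y)$ at scale $R$ into a pointwise control on $\nabla_x G$ losing one power of $R$; the $\delta(y)$ refinement comes from applying the boundary-distance argument in the $y$-variable via symmetry.

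For part~(2), where $y$ lies in the rough layer $r_y>1-\e$, pointwise values of $G(x,\cdot)$ are not available and the plan is to work throughout with $L^2$ averages on $B_{\e,+}(y)$. Since $G(x,\cdot)$ solves the homogeneous Stokes system on a neighborhood of $y$ (as $|x-y|>10\e$) and vanishes on $\Gamma^\e$, the large-scale boundary estimate \eqref{est.bdry.Lip} applied in the $y$-variable yields scale invariance of $L^2$ averages of $\nabla_y G(x,\cdot)$ between scales $\e$ and $1/2$, while Poincar\'e anchored at $\Gamma^\e$ supplies an extra factor $\e$ when passing from $\nabla_y G$ to $G$. The outer average at scale $\sim|x-y|/4$ is then controlled by the pointwise estimates of part~(1) evaluated at a nearby point in $\{r\le 1-\e\}$, which supplies the polynomial decay in $|x-y|$ together with the optional $\delta(x)$ factor. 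The estimate \eqref{est.Grenn-4} is the same argument without the outer Poincar\'e, and \eqref{est.DyG.bdry} follows by symmetry, noting that derivatives in $y$ of $G(x,y)$ correspond to derivatives in the first argument of $G(y,x)^T$, which are governed by \eqref{est.Green-2}.

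The main obstacle will be coordinating the global Saint-Venant decay with the local $R$-scale estimates: the $L^2$ energy of $G(\cdot,y)$ on annuli must simultaneously carry the $R^{-1}$ decay from the Stokes Green-function scaling \emph{and} the $e^{-c|x_1-y_1|}$ prefactor from Saint-Venant, which requires invoking Theorem~\ref{thm.De.SVP} \emph{before} applying the boundary Lipschitz estimate and then carefully interpolating between the slice-wise exponential bound and the annular energy bound. A secondary subtlety is that \eqref{est.bdry.Lip} must be rescaled to all intermediate scales $R\in[\e,1/2]$, which is legitimate thanks to the domain's scale invariance at scales $\ge\e$ built into Definition~\ref{def.John2}, but it must be checked that the constants depend only on the John parameter $L$ and not on $\e$ or on the position along the pipe.
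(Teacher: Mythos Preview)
Your proposal is correct and uses the same three pillars as the paper---Saint-Venant's principle for the exponential factor, the large-scale boundary Lipschitz estimates \eqref{est.bdry.Lip}--\eqref{est.int.Lip} for the polynomial decay and the $\delta$-improvements, and the symmetry $G(x,y)=G(y,x)^T$ to swap variables. The paper organizes the argument slightly differently: rather than building up from a basic energy bound $\int_{D^\e\setminus B_r(y)}|\nabla G(\cdot,y)|^2\lesssim r^{-1}$ as you do, it starts from the duality estimate $\|\nabla_x\nabla_y G(\cdot,y)\|_{L^2(D^\e_{1,2})}\le C$, applies Saint-Venant to the mixed derivative $\nabla_y G(\cdot,y)$ (which still solves the Stokes system in the first slot), extracts pointwise and averaged bounds on $\nabla_x\nabla_y G$ via \eqref{est.bdry.Lip}--\eqref{est.int.Lip}, and then integrates \emph{down} in $x$ (using $G=0$ on $\Gamma^\e$) to recover $\nabla_y G$ and $G$; this packages the exponential and polynomial parts in a single pass, avoiding the interpolation issue you flag as your ``main obstacle.''

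One small imprecision in your outline: you derive \eqref{est.DyG.bdry} by appealing via symmetry to \eqref{est.Green-2}, but \eqref{est.Green-2} is a pointwise bound valid only when \emph{both} arguments satisfy $r\le 1-\e$, whereas here the first argument (after swapping) lies in the rough layer. The fix is exactly what the paper does: use the averaged boundary version of the gradient estimate (which you already invoke for \eqref{est.G.bdry}--\eqref{est.Grenn-4}) together with the fundamental theorem of calculus along the radial segment from $\Gamma^\e$ to $y$, applied to $\nabla_x G(x,\cdot)$ and controlled by the mixed-derivative bound.
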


\begin{proof}
    First consider the case $|x_1 - y_1| \le 2$. For this case, the proof of the desired estimates are almost the same as \cite[Appendix B]{HPZ-21} and therefore we skip the detailed proof. For the case $|x_1 - y_1| > 2$, we need to extract the exponential decay of the Green's function from the Saint-Venant's principle. Observe that in the latter case $|x-y|$ in the denominator can be ignored since $ e^{-c|x_1 - y_1|} \lesssim |x-y|^{-3}$.

    Let us start with an estimate of the mixed derivatives of the Green's function. Actually, by the duality argument as in the proof of \cite[Proposition B.3]{HPZ-21}, for fixed $x\in D^\e$ with $x_1 = 0$, we have $\| \nabla_x \nabla_y G(x, \cdot) \|_{L^2(D^\e_{1,2})} \le C$ where $C$ is a universal constant depending only on the John domain condition. Due to the symmetry $G(x,y) = G(y,x)^T$, we have 
    \begin{equation}\label{est.DxDyG-0}
        \| \nabla_x \nabla_y G(\cdot, y) \|_{L^2(D^\e_{1,2})} \le C
    \end{equation}
    for a fixed $y\in D^\e$ with $y_1 = 0$.

    In view of the $x_1$-translation invariance of the estimates to be proved, without loss of generality, we may assume $x_1>2$ and $y_1 = 0$. Note that $(\nabla_y G(x,y), \nabla_y \Pi(x,y))$ still satisfies the system \eqref{eq.Stokes.SemiDe} in $x$ variable. It follows from Theorem \ref{thm.De.SVP}, Remark \ref{rem.thm.De.SVP} and \eqref{est.DxDyG-0} that there exists a universal constant $c>0$ such that, for any $t\ge 1$,
    \begin{equation}
        \| \nabla_x \nabla_y G(\cdot,y) \|_{L^2(D_t^\e)} \le C e^{-c t}.
    \end{equation}
    Hence, the large-scale Lipscthiz estimates in Section \ref{LSLipschitz} gives, if $x_1>2$ and $r_x\le 1-\e$,
    \begin{equation}
    \begin{aligned}
        &|\nabla_x \nabla_y G(x,y)| + |\nabla_y \Pi(x,y) - \dashint_{B_{1/2,+}(y)} \nabla_y \Pi(z,y) \dd z| \\
        & \le C \bigg( \dashint_{B_{1,+}(x)} |\nabla_x \nabla_y G(z,y)|^2 \dd z\bigg)^{1/2} \le C e^{-c x_1}
    \end{aligned}
    \end{equation}
    and, if $x_1>2$ and $r_x > 1-\e$, \begin{equation}\label{est.DxDyG}
    \begin{aligned}
        & \bigg( \dashint_{B_{r,+}(x)} |\nabla_x\nabla_y G(z,y)|^2 dz \bigg)^{1/2} \\
        &
        + \bigg( \dashint_{B_{r,+}(x)} |\nabla_y \Pi(z,y) - \dashint_{B_{1/2,+}(x)} \nabla_y \Pi(\cdot,y)|^2 \dd z \bigg)^{1/2} \\
        & \le C \bigg( \dashint_{B_{1,+}(x)} |\nabla_x \nabla_y G(z,y)|^2 \dd z \bigg)^{1/2} \le C e^{-c x_1}.
    \end{aligned}
    \end{equation}
    Then, thanks to the fact that $\nabla_y G(x,y)$ regarded as a function of $x$ vanishes on the boundary $\Gamma^\e$, the above estimates and integrations in $x$ imply that, for $x\in D^\e$ with $r_x \le 1-\e$,
    \begin{equation}
        |\nabla_y G(x,y)| \le C\delta(x) e^{-c x_1}
    \end{equation}
    and that, for $x\in D^\e$ with $r_x > 1-\e$,
    \begin{equation}
        \bigg( \dashint_{B_{\e,+}(x)} |\nabla_y G(z,y)|^2 \dd z \bigg)^{1/2} \le C\e e^{-c x_1}.
    \end{equation}
    These prove the estimates of $\nabla_x G(x,y)$ in \eqref{est.Green-2} and \eqref{est.Grenn-4} by virtue of the symmetry $G(x,y) = G(y,x)^T$. Moreover, the pointwise estimate of $G(x,y)$ in \eqref{est.G.interior} as well as \eqref{est.G.bdry}  follows from \eqref{est.Green-2} and \eqref{est.Grenn-4} and the integrations in $x$. 

    It remains to prove \eqref{est.DyG.bdry}. Note that $\nabla_x G(x,y)$ regarded as a function of $y$ vanishes on the boundary $\Gamma^\e$. Given $y\in D^\e$ with $y_1 = 0$, we have $(1+\e') y/|y| \in \Gamma^\e$ for some $0<\e'\le \e$. Then, by the fundamental theorem of calculus and \eqref{est.DxDyG}, we see that
    \begin{equation}
    \begin{aligned}
        & \bigg( \dashint_{B_{r,+}(x)} |\nabla_x G(z,y)|^2 \dd z \bigg)^{1/2} \\
        & = \bigg( \dashint_{B_{r,+}(x)} \bigg|\int_{1-\delta(y)}^{1+\e'} \nabla_x \nabla_y G(z,\frac{yt}{|y|}) \cdot \frac{y}{|y|} \dd t \bigg|^2 \dd z \bigg)^{1/2}  \\
        & \le C(\delta(y)+\e) e^{-c x_1}.
    \end{aligned}
    \end{equation}
    This gives \eqref{est.DyG.bdry} by symmetry. This completes the proof of Theorem \ref{thm.Green}.
\end{proof}

    \begin{remark}\label{rem.thm.Green}
    The following estimates for $\Pi$ is also possible, though they will not be used in this paper:
    \begin{itemize}
    \item
    For $y\in D^\e$ with $r_y\le 1-\e$, we have 
    \begin{equation}
    |\Pi(x,y) - \Pi_{\pm}(y)| \le Ce^{-c |x_1-y_1|} \min \bigg\{ \frac{1}{|x-y|^2}, \frac{\delta(y)}{|x-y|^3} \bigg\}.
    \end{equation}
    \item 
    For $y\in D^\e$ with $r_y>1-\e$ and $|x-y|>10\e$, we have
    \begin{equation}
    \begin{aligned}
        \bigg( \dashint_{B_{\e,+}(y)} |\Pi(x,\cdot) - \Pi_{\pm}(y)|^2 \bigg)^{1/2}
        \le Ce^{-c |x_1-y_1|}  \frac{\e}{|x-y|^3}.
    \end{aligned}
    \end{equation}
    \end{itemize}
    Here $\Pi_{\pm}(y)$ satisfying $\nabla\cdot \Pi_{\pm}(y) = 0$ are two bounded functions that represent the limit of $\Pi(x,y)$ as $x_1 \to \pm \infty$. Moreover, we take $\Pi_+(x)$ if $x_1 > y_1$ and $\Pi_-(x)$ if $x_1 < y_1$. 
    
    To obtain such estimates, we need an argument with the oscillation estimates of the pressure combined with the estimates of $G(x,y)$, similar to \cite[Proposition B.5]{HPZ-21}. Roughly speaking, for each fixed $y\in D^\e$ with $r_y\le 1-\e$, we can show the oscillation of $\Pi(x,y)$ decays exponentially as $x_1 - y_1 \to \pm \infty$. This means $\Pi(x,y) \to \Pi_+(y)$ as $x_1 \to \infty$ and $\Pi(x,y) \to \Pi_-(y)$ exponentially as $x_1 \to -\infty$. Obviously, by considering the total oscillation of $\Pi(x,y)$ in $x\in D^\e\cap \{ r\le 1-\e \}$, we obtain $|\Pi_+(y) - \Pi_-(y)|$ is bounded uniformly in $y\in D^\e \cap \{ r\le 1-\e\}$. If we assume $\Pi_-(y) = 0$, then the solution given by \eqref{eq.GreenFormula} satisfies $p(x) \to 0$ as $x_1 \to -\infty$. Under this assumption, we see that $\Pi_{\pm}(y)$ are both bounded. If $y\in D^\e \cap \{ r>1-\e \}$, $\Pi_{\pm}(y)$ is bounded in the $L^2$ average sense as before. 

    To see $\nabla\cdot \Pi_{\pm}(y) = 0$, it suffices to notice that $\nabla_y\cdot \Pi(x,y) = \delta_x(y)$. This can be seen by considering the system \eqref{eq.force=f} with $f = \nabla g$ for some scalar function $g\in C_0^\infty(D^\e)$. Clearly, $(0,g)$ is the solution of \eqref{eq.force=f} with $f = \nabla g$. Thus by \eqref{eq.GreenFormula}, we have
\begin{equation}
    g(x) = \int_{D^\e} \Pi(x,y) \cdot \nabla_y g(y) \dd y = - \int_{D^\e} \nabla_y \Pi(x,y) g(y) \dd y.
\end{equation}
Since the above identity holds for any $g\in C_0^\infty(D^\e)$, we have $\nabla_y \Pi(x,y) = - \delta_x(y)$.
This implies $\nabla\cdot \Pi_{\pm}(y) = 0$ by letting $x\to \pm \infty$.
    \end{remark}

    \section{Boundary layers}\label{sec.bl}

Let us set
\begin{equation}\label{def.U0}
    U^0 = U^0(r) = (1-r^2)e_1.
\end{equation}
Then the Hagen-Poiseuille flow defined in \eqref{eq.HPflow1} is represented as $u^0=(2\phi/\pi)U^0$. Our aim in this section is to construct boundary layers which correct the trace of $U^0$ on $\Gamma^\e=\partial D^\e$. A boundary layer $u$ will be constructed as a solution in $H_{\rm uloc}^1(D^\e)^3$ of the Stokes system
\begin{equation}\label{eq.1stBL}
    \left\{
    \begin{array}{ll}
    -\Delta u+\nabla p=0&\mbox{in}\ D^\e,\\
    \nabla\cdot u=0&\mbox{in}\ D^\e,\\
    u = U^0 &\mbox{on}\ \Gamma^\e.
    \end{array}\right.
\end{equation}
Notice the smallness $|U^0|\le 2\e$ on $\Gamma^\e$,  which will be important when estimating $u$. Remark that there are no conditions on the flux in \eqref{eq.1stBL}. This leads to the nonuniqueness of solutions. To avoid such indeterminacy, in Section \ref{Normal}, we ``normalize" a boundary layer $u$ as
\begin{equation}\label{def.normalize}
    \Phi^\e[U^0 - u] = \frac{\pi}{2}.
\end{equation}
The process of this normalization is somewhat indirect as boundary layers do not satisfy the no-slip boundary condition and thus one cannot define the flux in $D^\e$ directly by zero extension method. The boundary layer satisfying \eqref{def.normalize} will be called the {\it normalized boundary layer} and denoted by $v^\e_{\rm bl}$. The normalized boundary layer $v^\e_{\rm bl}$ satisfies the flux condition $\Phi^\e[u^0 - (2\phi/\pi)v^\e_{\rm bl}] = \phi$, which will be used when we prove the main theorems in Section \ref{sec.prf}. Once the normalized boundary layer $v^\e_{\rm bl}$ is achieved, we investigate its relationship with the Navier's wall law in Section \ref{NWL} and estimate the decay of correlation in Section \ref{Corr}. The results in Section \ref{Corr} will be adopted in Section \ref{Conc} to prove the concentration inequalities for $v^\e_{\rm bl}$ under the functional inequalities in Definition \ref{def.funct.ineq}; see Proposition \ref{prop.flu.vbl}.

    \subsection{Existence and size estimates}\label{BL}

In this section, we prove constructively the existence and estimates of the solution $(u,p)$ of \eqref{eq.1stBL}. Let $\eta$ be a smooth cut-off function such that $\eta(r) = 1$ for $r>1-\e$, $\eta(r) = 0$ for $r<1-3\e$, $\eta'(r) \lesssim \e^{-1}$ and $\eta''(r) \lesssim \e^{-2}$. Letting $r = (x_2^2+x_3^2)^{1/2}$, we consider
\begin{equation}
    w(x) = u(x) - \eta(r)U(r),
    \qquad
    \pi(x) = p(x) + 4\eta(r)x_1.
\end{equation}
Then $(w,\pi)$ satisfies
\begin{equation}\label{eq.wq}
    \left\{
    \begin{array}{ll}
    -\Delta w+\nabla \pi= F(x)&\mbox{in}\ D^\e,\\
    \nabla\cdot w=0&\mbox{in}\ D^\e,\\
    w = 0 &\mbox{on}\ \Gamma^\e,
\end{array}\right.
\end{equation}
where
\begin{equation}\label{eq.Fr}
    F(x) 
    = 
    (\eta''(r) + \eta'(r) r^{-1}) U^0(r) - 4r\eta'(r)e_1 + 4x_1 \nabla_x (\eta(r)).
\end{equation}
In the following, we consider $(w,\pi)$ instead of $(u,p)$. Since $F$ is supported in $\{ 1-3\e \le r \le 1-\e \}$ and $|F(x)| \lesssim \e^{-1}(1+|x_1|)$, the Green's representation in Section \ref{Green} gives
\begin{equation}\label{eq.w.Green}
    w(x) 
    = \int_{D^\e \cap \{1-3\e < r < 1-\e\}} G(x,y) F(y) \dd y
    = \int_{\{1-3\e < r < 1-\e\}} G(x,y) F(y) \dd y.
\end{equation}
We estimate $w$ using \eqref{eq.w.Green}. Note that the thin thickness of $\{1-3\e < r < 1-\e\}$ will provide an $\e$ and the smallness of $G(x,y)$ for $y$ close to the boundary will provide another $\e$. The exponential decay of $G$ guarantees that the integral is finite even though $F$ has linear growth at infinity. Moreover, the term $4x_1 \nabla_x (\eta(r))$ in \eqref{eq.Fr} can be modified to $4(x_1 + \pi_0)\nabla_x(\eta(r))$ with any constant $\pi_0\in \R$ since this change will not contribute to $w$ as $\nabla_y\cdot G^T(x,y) = 0$, which follows from $\nabla_x\cdot G(x,y) = 0$ and the symmetry $G(x,y) = G^T(y,x)$.

Setting
\begin{equation}\label{def.uebl}
    u^\e_{\rm bl}(x) = w(x) + U^0(r) \eta(r),
    \qquad
    p^\e_{\rm bl}(x) = q(x) - 4\eta(r) x_1,
\end{equation}
we call $u^\e_{\rm bl}$ a {\it boundary layer}. By definition, a boundary layer is a solution of \eqref{eq.1stBL}. We point out that a boundary layer essentially depends on the choice of the cut-off function $\eta$. In particular, it follows that the solutions of \eqref{eq.1stBL} are not unique. Motivated by this observation, we will later ``normalize" a boundary layer to exclude such ambiguity.

The following proposition gives the size estimates of the boundary layer $(u^\e_{\rm bl},p^\e_{\rm bl})$.

    \begin{proposition}[Size estimates of boundary layers]\label{prop.ubl}
    The following hold.
    \begin{enumerate}[(1)]
    \item
    For $x\in D^\e$ with $r_x\le 1-4\e$, we have
    \begin{equation}\label{est.int-size}
        |u^\e_{\rm bl}(x)| \le C\e, \qquad |\nabla u^\e_{\rm bl}(x)| \le \frac{C \e}{1-r_x}.
    \end{equation}
    \item
    For $x\in D^\e$ with $r_x>1-4\e$, we have
    \begin{equation}\label{est.bdry-size}
        \bigg( \dashint_{B_{\e,+}(x)} |u^\e_{\rm bl}|^2 \bigg)^{1/2} \le C\e, \qquad \bigg( \dashint_{B_{\e,+}(x)} |\nabla u^\e_{\rm bl}|^2 \bigg)^{1/2} \le C.
    \end{equation}
    \end{enumerate}
    \end{proposition}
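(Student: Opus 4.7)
The approach reduces the proof to size bounds for $w$, since $u^\e_{\rm bl} = w + \eta(r)U^0(r)$ and $\eta(r)U^0(r)$ is identically zero in $\{r \le 1-3\e\}$, while elsewhere it satisfies $|\eta U^0|\lesssim \e$ and $|\nabla(\eta U^0)|\lesssim 1$, both compatible with the claimed estimates. The plan is to represent $w$ via the Green's function from Theorem \ref{thm.Green} and then carefully estimate the resulting integral using the pointwise and averaged bounds established there.

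The starting point is the Green's representation $w(x) = \int_{D^\e} G(x,y) F(y)\,dy$ with $F$ as in \eqref{eq.Fr}. The only problematic term is $4y_1 \nabla_y \eta(r_y)$, which grows linearly in $y_1$ and is not directly absorbable by the Green's function decay. As observed in the text just after \eqref{eq.Fr}, adding $4\pi_0 \nabla_y \eta(r_y)$ to $F$ for any constant $\pi_0 \in \R$ does not change $w(x)$: integration by parts in $(y_2, y_3)$ and then $y_1$ shows this added piece integrates to zero against $G$, using that the columns of $y \mapsto G(x,y)$ are divergence-free in $y$ and vanish on $\Gamma^\e$ (consequences of the symmetry $G(x,y) = G(y,x)^T$ and \eqref{eq.GreenFunc}). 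Fixing $x$ and taking $\pi_0 = -x_1$, I obtain an equivalent representation with forcing $\tilde F_x$ satisfying $|\tilde F_x(y)| \lesssim \e^{-1}(1 + |y_1 - x_1|)$ and supported in the shell $\{1-3\e < r_y < 1-\e\}$ of radial thickness $2\e$.

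For interior $x$ with $\delta(x) \ge 4\e$, every $y$ in the support of $\tilde F_x$ has $\delta(y) \le 3\e$ and $|x-y| \ge \delta(x) - \delta(y) \ge \e$. I would then split the $y$-integration by $|x-y|$: in the near field $|x-y| \le C\delta(x)$ apply \eqref{est.G.interior} in the form $|G(x,y)| \lesssim \e\, e^{-c|x_1-y_1|}/|x-y|^2$, and in the far field $|x-y| > C\delta(x)$ use the sharper bound $|G(x,y)| \lesssim \delta(x)\e\, e^{-c|x_1-y_1|}/|x-y|^3$ from the same estimate (which gains an additional factor $\delta(x)/|x-y|$). A dyadic decomposition in cylindrical coordinates, with the intersection of a $\rho$-ball at $x$ with the shell having volume $\sim \rho^2 \e$ for $\rho \ge \e$, yields $O(\e)$ on each dyadic scale in the near field and a geometric series summing to $O(\e)$ in the far field; the polynomial factor $|y_1 - x_1|$ in $\tilde F_x$ is absorbed into the exponential $e^{-c|x_1-y_1|}$. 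This produces $|w(x)| \le C\e$. For the gradient one replaces \eqref{est.G.interior} by \eqref{est.Green-2}, whose extra factor $1/|x-y|$ integrates (again dyadically) to $1/\delta(x)$, yielding $|\nabla w(x)| \le C\e/(1-r_x)$.

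For the boundary case $r_x > 1-4\e$, pointwise bounds on $G(x,\cdot)$ are unavailable, so I first pass to the $L^2$-average over $B_{\e,+}(x)$ in the $x$-variable. The argument then runs in parallel, using the averaged-in-$x$ analogues of \eqref{est.G.interior} and \eqref{est.Green-2} (derivable by the same symmetry and duality tools used to prove Theorem \ref{thm.Green}); the boundary-layer part of the $y$-integration $\{r_y > 1-\e\}$ is handled by covering with cubes $B_{\e,+}(z)$ centered on $\Gamma^0$ and applying Cauchy--Schwarz with \eqref{est.G.bdry} and \eqref{est.Grenn-4}, the John-domain property ensuring $|B_{\e,+}(z)| \approx \e^3$ uniformly. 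The principal technical obstacle is the sharp dyadic accounting in cylindrical coordinates when $\delta(x)$ is comparable to $\e$: a naive use of only $|G| \lesssim \delta(y)/|x-y|^2$ loses a $\log \e$ factor, which is eliminated precisely by invoking the $\min\{\cdots\}$-structure of \eqref{est.G.interior} and splitting into near-field and far-field contributions as above.
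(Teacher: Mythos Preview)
Your interior argument for $|w(x)|$ is essentially the paper's: same modified forcing $\tilde F_x$ with $|\tilde F_x(y)|\lesssim \e^{-1}(1+|x_1-y_1|)$, same use of \eqref{est.G.interior} with $\delta(y)\approx\e$, and the same near/far split (the paper splits by $|y_1-x_1|\lessgtr 1$ rather than dyadically, but the computation is the same). For the interior gradient you differentiate the Green's representation and invoke \eqref{est.Green-2}; the paper instead observes that $w$ solves the homogeneous Stokes system in $\{r<1-3\e\}$ and applies the interior Lipschitz estimate to the already-established bound $|w|\lesssim\e$, which is shorter and avoids repeating the integral calculation. Either route is fine.

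The boundary case is where your outline has a gap. The averaged Green's function bounds in Theorem~\ref{thm.Green}~(2) (and their symmetric counterparts with $x$ and $y$ swapped) all carry the restriction $|x-y|>10\e$, so they say nothing about the contribution from $y$ in the support of $\tilde F_x$ with $|x-y|\le 10\e$. Your sentence about ``the boundary-layer part of the $y$-integration $\{r_y>1-\e\}$'' does not help here, since $\tilde F_x$ is supported in $\{1-3\e<r_y<1-\e\}$ and never enters $\{r_y>1-\e\}$. The paper resolves this by a different mechanism: it writes $w=w^{(1)}+w^{(2)}$ where $w^{(1)}$ solves the Stokes system with forcing $\tilde F_x\mathbf{1}_{B_{10\e,+}(x)}$, and bounds $w^{(1)}$ by a direct energy estimate (Sobolev--Poincar\'e plus the small support of the localized forcing) rather than through any Green's function estimate. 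This gives $\|\nabla w^{(1)}\|_{L^2(D^\e)}\lesssim\e^{3/2}$, hence the desired averaged bounds on $B_{\e,+}(x)$. Only $w^{(2)}$, whose forcing is supported at distance $\ge 10\e$ from $x$, is treated via the Green's function as you describe. The same split handles $\nabla w$ in the boundary region. You should adopt this decomposition, or else supply an explicit near-singular bound on $\big(\dashint_{B_{\e,+}(x)}|G(\cdot,y)|^2\big)^{1/2}$ valid for $|x-y|\le 10\e$, which Theorem~\ref{thm.Green} does not provide.
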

\begin{proof}
    In view of the construction of $u^\e_{\rm bl}$ in \eqref{def.uebl}, it suffices to estimate $w$ given by \eqref{eq.w.Green}. Also recall that we can modify the last term on the right of \eqref{eq.Fr} so that
    \begin{equation}\label{eq.w.Green.Fxr}
        w(x) = \int_{D^\e} G(x,y) F_x(y) \dd y,
    \end{equation}
    where
    \begin{equation}\label{eq.Fxr}
        F_x(y) 
        = 
        (\eta''(r_y) + \eta'(r_y) r_y^{-1}) U^0(r_y) - 4r_y\eta'(r_y)e_1 + 4(y_1 -x_1)\nabla_y (\eta(r_y)).
    \end{equation}
    Then we have
    \begin{equation}\label{est.Fxr}
        |F_x(y)| \lesssim \e^{-1} (1+ |x_1 - y_1|).
    \end{equation}

    \textbf{Step 1: Interior estimate of $w$.}
    Fix $x\in D^\e$ with $r_x\le 1-4\e$. Since $F_x$ is supported in the thin layer $\{1-3\e\le r\le 1-\e \}$, we use \eqref{eq.w.Green.Fxr} to obtain
    \begin{equation*}
    \begin{aligned}
        |w(x)| & \le \int_{\{ 1-3\e<r<1-\e \}} |G(x,y)| |F_x(y)| \dd y \\
        & \lesssim \int_{\{ 1-3\e<r<1-\e \}} \frac{\delta(x) \delta(y) \e^{-1} }{|x-y|^3} e^{-c|x_1-y_1|} (1+|x_1 - y_1|) \dd y \\
        & \lesssim (1-r_x) \int_{\{ 1-3\e<r<1-\e \}} \frac{1}{|x-y|^3} e^{-c|x_1-y_1|} (1+|x_1 - y_1|) \dd y,
    \end{aligned}
    \end{equation*}
    where we have used \eqref{est.Fxr} and the interior estimate of the Green's function \eqref{est.G.interior} in the second inequality and $\delta(y) = 1-r_y \approx \e$ in the last. Divide the region of integration into
    \begin{equation*}
        D^\e \cap \{ 1-3\e<r_y<1-\e \} = K_1 \cup K_2
    \end{equation*}
    with
    \begin{equation*}
    \begin{aligned}
        K_1 & = D^\e \cap \{ 1-3\e<r_y<1-\e \} \cap \{ |y_1-x_1|<1 \}, \\
        K_2 & = D^\e \cap \{ 1-3\e<r_y<1-\e \} \cap \{ |y_1-x_1|>1 \}.
    \end{aligned}
    \end{equation*}
    On $K_1$, we have
    \begin{equation*}
        \int_{K_1} \frac{1}{|x-y|^3} e^{-c|x_1-y_1|} (1+|x_1 - y_1|) \dd y \le \int_{K_1} \frac{2}{|x-y|^3}\dd y \lesssim \frac{\e}{1-r_x}. 
    \end{equation*}
    On $K_2$, we have
    \begin{equation*}
        \int_{K_2} \frac{1}{|x-y|^3} e^{-c|x_1-y_1|} (1+|x_1 - y_1|) \dd y \lesssim \e.
    \end{equation*}
    As a result, we obtain, for $x\in D^\e$ with $r_x\le 1-4\e$,
    \begin{equation}\label{est1.prf.prop.ubl}
        |w(x)| \lesssim \e.
    \end{equation}

    \textbf{Step 2: Boundary estimate of $w$.} 
    Fix $x\in D^\e$. Let $\mathbf{1}_{10\e}$ be the characteristic function of $B_{10\e,+}(x) := D^\e \cap B_{10\e}(x)$. To handle the boundary case, we split $w$ as $w = w^{(1)} + w^{(2)}$, where $w^{(1)}$ is the weak solution of
    \begin{equation}\label{eq.w1F}
        \left\{
        \begin{array}{ll}
        -\Delta w^{(1)}+\nabla q^{(1)}= F_x \mathbf{1}_{10\e}&\mbox{in}\ D^\e, \\ 
        \nabla\cdot w^{(1)}=0&\mbox{in}\ D^\e, \\ 
        w^{(1)} = 0 &\mbox{on}\ \Gamma^\e, 
    \end{array}\right.
    \end{equation}
    and $w^{(2)}$ is expressed by
    \begin{equation}\label{eq.w2.Green}
        w^{(2)}(x) = \int_{\{ 1-3\e<r<1-\e \}} G(x,y) F_x(y)(1-\mathbf{1}_{10\e}(y)) \dd y.
    \end{equation}
    Then we can use the energy estimate to $w^{(1)}$ and avoid the singularity of the Green's function in \eqref{eq.w2.Green}. Indeed, integrating the first equation of \eqref{eq.w1F} against $w^{(1)}$ yields that
    \begin{equation*}
    \begin{aligned}
        \int_{D^\e} |\nabla w^{(1)}|^2 & = \int_{D^\e\cap B_{10\e,+}(x)} F_x \cdot w^{(1)} \\
        & \lesssim \e^{-1} \int_{D^\e\cap B_{10\e,+}(x)} |w^{(1)}|  \\
        & \lesssim \e^{-1} |D^\e\cap B_{10\e,+}(x)|^{5/6} \bigg( \int_{D^\e\cap B_{10\e,+}(x)} |w^{(1)}|^6 \bigg)^{1/6} \\
        & \lesssim \e^{3/2} \bigg(\int_{D^\e} |\nabla w^{(1)}|^2 \bigg)^{1/2},
    \end{aligned}
    \end{equation*}
    where we have used \eqref{est.Fxr} in the first inequality and the Sobolev-Poincar\'{e} inequality in the last inequality. This implies that, for any $x\in D^\e$,
    \begin{equation}\label{est.Dw1}
        \bigg( \dashint_{B_{\e,+}(x)} |\nabla w^{(1)}|^2 \bigg)^{1/2} \lesssim 1.
    \end{equation}
    Hence, by the fact $w^{(1)} =0$ on $\Gamma^\e$ and the Poincar\'{e} inequality, we obtain, for $x\in D^\e$ with $r_x>1-4\e$, 
    \begin{equation*}
        \bigg( \dashint_{B_{\e,+}(x)} |w^{(1)}|^2 \bigg)^{1/2} \lesssim \e.
    \end{equation*}
    Next we estimate $w^{(2)}$. By the boundary estimate of the Green's function \eqref{est.G.bdry}, we have
    \begin{equation*}
    \begin{aligned}
        &\bigg( \dashint_{B_{\e,+}(x)} |w^{(2)}|^2 \bigg)^{1/2}\\
        & \lesssim \int_{\{ 1-3\e<r<1-\e \} \setminus B_{10\e,+}(x) } \bigg( \dashint_{B_{\e,+}(x)} |G(z,y)|^2 |F_z(y)|^2 \dd z \bigg)^{1/2} \dd y \\
        & \lesssim \e \int_{\{ 1-3\e<r<1-\e \} \setminus B_{10\e,+}(x) } \frac{ e^{-c|x_1-y_1|}(1+|x_1-y_1|)}{|x-y|^3} \dd y \\
        & \lesssim \e.
    \end{aligned}
    \end{equation*}
    Combining the estimates of $w^{(1)}$ and $w^{(2)}$, we obtain, for $x\in D^\e$ with $r_x>1-4\e$,
    \begin{equation}\label{est2.prf.prop.ubl}
        \bigg( \dashint_{B_{\e,+}(x)} |w|^2 \bigg)^{1/2} \lesssim \e.
    \end{equation}

    \textbf{Step 3: Gradient estimates of $w$ and end of the proof.} 
    We first estimate $\nabla w(x)$ for $x\in D^\e$ with $r_x\le 1-4\e$. Note that $w$ satisfies $-\Delta w + \nabla q = 0$ in $\{ r<1-3\e \}$ and that, for any $x\in D^\e$ with $r_x\le 1-4\e$, we have $B_{1-r_x-3\e,+}(x) \subset \{ r < 1-3\e \}$. Thus, by the interior Caccioppoli inequality and Lipschitz estimate, we see that
    \begin{equation}\label{est3.prf.prop.ubl}
        |\nabla w(x)| \lesssim \frac{1}{1-r_x-3\e} \bigg( \dashint_{B_{1-r_x-3\e,+}(x)} |w|^2 \bigg)^{1/2} \lesssim \frac{\e}{1-r_x-3\e} \lesssim \frac{\e}{1-r_x}.
    \end{equation}
    Here we have used the estimate of $w$ in Step 1 and Step 2, and $1-r_x-3\e \approx 1-r_x$.

    Next, for $x\in D^\e$ with $r_x > 1-4\e$, we consider the split $w = w^{(1)} + w^{(2)}$ as in Step 2. The estimate of $\nabla w^{(1)}$ is contained in \eqref{est.Dw1}. The estimate for $\nabla w^{(2)}$ is derived from 
    \begin{equation*}
        \nabla w^{(2)}(x) 
        = \int_{\{ 1-3\e<r<1-\e \}} \nabla_x G(x,y) F_x(y)(1-\chi_{10\e}(y)) \dd y.
    \end{equation*}
    Indeed, using \eqref{est.DyG.bdry} and the symmetry of $G(x,y)$, we have
    \begin{equation*}
    \begin{aligned}
        & \bigg( \dashint_{B_{\e,+}(x)} |\nabla w^{(2)}|^2 \bigg)^{1/2} \\
        & \lesssim
        \e^{-1} \int_{\{ 1-3\e<r<1-\e \} \setminus B_{10\e,+}(x) } \bigg( \dashint_{B_{\e,+}(x)} |\nabla_x G(z,y)|^2 |F_z(y)|^2 \dd z \bigg)^{1/2} \dd y\\
        & \lesssim 
        \e^{-1} \int_{\{ 1-3\e<r<1-\e \} \setminus B_{10\e,+}(x) } \frac{(1-r_y) e^{-c|x_1-y_1|} (1+|x_1-y_1|)}{|x-y|^3} \dd y\\
    & \lesssim 1.
    \end{aligned}
    \end{equation*}
    By the estimates of $\nabla w^{(1)}$ and $\nabla w^{(2)}$, we obtain for $x\in D^\e$ with $r_x>1-4\e$,
    \begin{equation}\label{est4.prf.prop.ubl}
        \bigg( \dashint_{B_{\e,+}(x)} |\nabla w|^2 \bigg)^{1/2} \lesssim 1.
    \end{equation}
    Summing up \eqref{est1.prf.prop.ubl} and \eqref{est2.prf.prop.ubl}--\eqref{est4.prf.prop.ubl}, we obtain the desired estimates \eqref{est.int-size}--\eqref{est.bdry-size}.
\end{proof}

\begin{remark}\label{rmk.L2size}    
We point out that the large-scale Lipschitz estimate is the key to derive the size estimates in Proposition \ref{prop.ubl} which particularly including the large-scale size estimate \eqref{est.bdry-size} near the rough boundary. The crucial consequence of these estimates are the global $L^2$ size estimates, i.e., 
    \begin{equation}
        \| u^\e_{\rm bl} \|_{L^2_{\rm uloc}(D^\e)} + \| \delta \nabla u^\e_{\rm bl} \|_{L^2_{\rm uloc}(D^\e)} \lesssim \e
    \end{equation}
    and
    \begin{equation}
        \| \nabla u^\e_{\rm bl} \|_{L^2_{\rm uloc}(D^\e)} \lesssim \e^{1/2}.
    \end{equation}
    To prove these estimates, we decompose $D^\e$ as $D_{1-4\e} = \{ r<1-4\e \}$ and $D^\e\setminus D_{1-4\e}$. Then \eqref{est.int-size} directly implies $\| u^\e_{\rm bl} \|_{L^2(D_{1-4\e}\cap \{ k<x_1<k+1 \})} \lesssim \e$. Moreover, by \eqref{est.bdry-size} and the Fubini's theorem, we have
    \begin{equation*}
    \begin{aligned}
        &\int_{(D^\e\setminus D_{1-4\e})\cap \{ k<x_1<k+1 \}} |u^\e_{\rm bl}(x)|^2 \dd x \\
        & \lesssim \int_{(D^\e\setminus D_{1-4\e})\cap \{ k<x_1<k+1 \}} \dashint_{B_{\e,+}(x)} |u^\e_{\rm bl}(y)|^2\dd y \dd x \\
        & \lesssim \e^2 |(D^\e\setminus D_{1-4\e})\cap \{ k<x_1<k+1 \}| \\
        & \lesssim \e^3.
        \end{aligned}
    \end{equation*}
    As a result, we have $\| u^\e_{\rm bl} \|_{L^2_{\rm uloc}(D^\e)} \lesssim \e$. The proof for the estimates of $\nabla u^\e_{\rm bl}$ is similar.
\end{remark}

    \subsection{Normalization}\label{Normal}

As we have pointed out, a boundary layer $(u^\e_{\rm bl}, p^\e_{\rm bl})$ constructed above is not a unique solution of \eqref{eq.1stBL} and depends on the choice of the cut-off function $\eta$. However, one can ``normalize" it in the following somewhat indirect manner. Fixing $\eta$ and $(u^\e_{\rm bl}, p^\e_{\rm bl})$, we set 
\begin{equation}\label{eq.HPflow}
    U^\e = U^0 - u^\e_{\rm bl},
    \qquad
    P^\e = -4x_1 - p^\e_{\rm bl}.
\end{equation}
It is straightforward to verify that $(U^\e, P^\e)$ is a nontrival solution in $H_{\rm uloc}^1(D^\e)^3$ of
\begin{equation}\label{eq.HP}
    \left\{
    \begin{array}{ll}
    -\Delta u + \nabla p = 0 &\mbox{in}\ D^\e,\\
    \nabla\cdot u = 0 &\mbox{in}\ D^\e,\\
    u = 0 &\mbox{on}\ \Gamma^\e
\end{array}\right.
\end{equation}
with nonzero flux $\Phi^\e[U^\e] \neq 0$. Therefore, by Remark \ref{rem.coro.Stokes.uniqueness}, the vector space $\mathcal{V}$ spanned by the solutions of \eqref{eq.HP} is one dimensional and $U^\e$ can be taken as a basis. Hence any boundary layer takes the form of $u^\e_{\rm bl} = U^0 - V^\e$ for some $V^\e\in \mathcal{V}$, which excludes the dependence of $u^\e_{\rm bl}$ on the cut-off functions indirectly. Then, by choosing $V^\e\in \mathcal{V}$ such that
\begin{equation}\label{eq.determineBL}
    \Phi^\e[V^\e] = \Phi^0[U^0] = \frac{\pi}{2}, 
\end{equation}
we define
\begin{equation}\label{def.NBL}
    v^\e_{\rm bl} = U^0 - V^\e
\end{equation}
and call $v^\e_{\rm bl}$ the {\it normalized boundary layer}. By Remark \ref{rem.coro.Stokes.uniqueness} again, $v^\e_{\rm bl}$ is uniquely determined. Moreover, $v^\e_{\rm bl}$ satisfies the normalized condition \eqref{def.normalize} with $u=v^\e_{\rm bl}$. 

Note that a boundary layer does not satisfy the no-slip boundary condition and thus its flux in $D^\e$ cannot be defined directly. It is emphasized that fixing a special boundary layer enables us to analyze the fluctuation of the boundary layers by preventing arbitrarily change as the domain $D^\e$ varies. In addition, we have the following.

\begin{proposition}\label{prop.nbl}
    The unique normalized boundary layer $v^\e_{\rm bl}$ determined by \eqref{eq.determineBL}--\eqref{def.NBL} satisfies the size estimates in Proposition \ref{prop.ubl}.
\end{proposition}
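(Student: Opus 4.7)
The strategy is to realize $v^\e_{\rm bl}$ as a linear combination of the constructed boundary layer $u^\e_{\rm bl}$ from Proposition~\ref{prop.ubl} and the Hagen--Poiseuille profile $U^0$, with the coefficient of $U^0$ being of order $O(\e)$. Since $v^\e_{\rm bl} - u^\e_{\rm bl}$ solves \eqref{eq.HP} and thus lies in the one-dimensional space $\mathcal{V}$ spanned by $U^\e = U^0 - u^\e_{\rm bl}$ (see the discussion preceding the proposition), there exists a scalar $c$ with
\begin{equation*}
v^\e_{\rm bl} = u^\e_{\rm bl} + c\, U^\e = (1-c)\, u^\e_{\rm bl} + c\, U^0.
\end{equation*}
The normalization $\Phi^\e[V^\e] = \pi/2$, combined with $V^\e = (1-c)\, U^\e$, gives $c = 1 - (\pi/2)/\Phi^\e[U^\e]$, so the problem reduces to establishing $\Phi^\e[U^\e] = \pi/2 + O(\e)$.

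The crux of the argument is this flux estimate, which I expect to be the main obstacle. Picking $\varphi \in C_0^\infty((0,1))$ with $\int_0^1 \varphi\, dx_1 = 1$, the definition \eqref{def.flux} yields
\begin{equation*}
\Phi^\e[U^\e] = \int_{D^\e_{0,1}} (1-r^2)\, \varphi(x_1)\, dx \;-\; \int_{D^\e_{0,1}} u^\e_{{\rm bl},1}\, \varphi(x_1)\, dx.
\end{equation*}
For the first integral, the nominal cylinder $D^0_{0,1}$ contributes exactly $\pi/2$ (this is the classical computation $\Phi^0[U^0] = \pi/2$), while the thin annular region $(D^\e \setminus D^0)_{0,1}$, of volume $O(\e)$ with $|1-r^2| = O(\e)$, contributes $O(\e^2)$. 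For the second, H\"older's inequality combined with the global size bound $\|u^\e_{\rm bl}\|_{L^2_{\rm uloc}(D^\e)} \lesssim \e$ from Remark~\ref{rmk.L2size} gives an $O(\e)$ control. Combining these two, we obtain $\Phi^\e[U^\e] = \pi/2 + O(\e)$, and hence $|c| \lesssim \e$ for $\e$ sufficiently small. This step crucially relies on the global $L^2_{\rm uloc}$ size estimate for $u^\e_{\rm bl}$, which itself rests on the large-scale Lipschitz regularity.

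The final step is to transfer the size bounds. Since $|U^0| \le 1$ and $|\nabla U^0| \le 2$ pointwise in $D^\e$, and $|c| \lesssim \e$, the additional term $c\, U^0$ in the decomposition contributes at most $O(\e)$ to both $|v^\e_{\rm bl}|$ and $(\dashint_{B_{\e,+}(x)} |v^\e_{\rm bl}|^2)^{1/2}$, and at most $O(\e)$ to the corresponding gradient quantities. These extra contributions are dominated by the bounds already established for $(1-c)\, u^\e_{\rm bl}$ in Proposition~\ref{prop.ubl}, so each of the four estimates \eqref{est.int-size}--\eqref{est.bdry-size} carries over verbatim to $v^\e_{\rm bl}$, completing the proof.
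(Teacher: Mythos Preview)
Your proposal is correct and follows essentially the same route as the paper: both identify $v^\e_{\rm bl} - u^\e_{\rm bl}$ as a multiple of the basis vector $U^\e$ of $\mathcal{V}$, compute $\Phi^\e[U^\e] = \pi/2 + O(\e)$ from the size estimates of $u^\e_{\rm bl}$, deduce that the coefficient is $O(\e)$, and then transfer the bounds. The only cosmetic difference is in the final step: the paper keeps the decomposition $v^\e_{\rm bl} = u^\e_{\rm bl} - \e_1 U^\e$ and invokes the large-scale regularity estimates \eqref{est.bdry.Lip}--\eqref{est.int.Lip} to bound $U^\e$ directly, whereas you regroup as $(1-c)u^\e_{\rm bl} + c\,U^0$ and use the explicit pointwise bounds $|U^0|\le 1$, $|\nabla U^0|\le 2$, which is slightly more elementary but equivalent.
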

\begin{proof}
    Let $u^\e_{\rm bl}$ be the boundary layer constructed as in Proposition \ref{prop.ubl} and let $U^\e$ be given by \eqref{eq.HPflow}. Since $U^\e$ solves \eqref{eq.HP}, by the large-scale regularity of Stokes system with no-slip boundary condition, we know that, for $x\in D^\e$ with $r_x \le 1-4\e$, 
    \begin{equation}\label{est.Ue.int}
        |U^\e(x)| + |\nabla U^\e(x)| \lesssim 1,
    \end{equation}
    and, for $x\in D^\e$ with $r_x>1-4\e$,
    \begin{equation}\label{est.Ue.bdry}
        \bigg( \dashint_{B_{\e,+}(x)} |U^\e|^2 \bigg)^{1/2} + \bigg( \dashint_{B_{\e,+}(x)} |\nabla U^\e|^2 \bigg)^{1/2} \lesssim  1.
    \end{equation}
    Moreover, by the size estimates in Proposition \ref{prop.ubl}, it is not difficult to see that
    \begin{equation}
        \Phi^\e[U^\e]
        = \Phi^\e[U^0] + \Phi^\e[u^\e_{\rm bl}]
        = \Phi^0[U^0] + O(\e).
    \end{equation}
    Let $V^\e$ defined in \eqref{def.NBL}. Then, by \eqref{eq.determineBL}, we have
    \begin{equation}\label{est.Ve-Ue.flux}
        \Phi^\e[V^\e - U^\e] = O(\e).
    \end{equation}
    Since $V^\e - U^\e$ solves \eqref{eq.HP}, we have $V^\e - U^\e = \e_1 U^\e$ by Remark \ref{rem.coro.Stokes.uniqueness}, where $\e_1 := (2/\pi)\Phi^\e[V^\e - U^\e]$. On the other hand, observe that $V^\e - U^\e = -v^\e_{\rm bl} + u^\e_{\rm bl}$. It follows that
    \begin{equation*}
        v^\e_{\rm bl} = u^\e_{\rm bl} - \e_1 U^\e.
    \end{equation*}
    Hence, the size estimates of $v^\e_{\rm bl}$ follows from those of $u^\e_{\rm bl}$ contained in Proposition \ref{prop.ubl} and the estimates of $\e_1 U^\e$ obtained by \eqref{est.Ue.int}--\eqref{est.Ue.bdry} and \eqref{est.Ve-Ue.flux}.
\end{proof}

    \subsection{Navier's wall law}\label{NWL}

Let $v^\e_{\rm bl}$ be the normalized boundary layer uniquely determined by \eqref{eq.determineBL}--\eqref{def.NBL}. We will investigate the connection between the boundary layer $v^\e_{\rm bl}$ and the Navier's wall law. Specifically, we will determine the slip length $\lambda^\e$ in the effective problem \eqref{eq.uN}. The approach here is based on the asymptotic analysis when $\e\to0$. 

Let $\E[v^\e_{\rm bl}]$ denote the expectation of $v^\e_{\rm bl}$ restricted to $D^0$. Then $\E[v^\e_{\rm bl}]$ is a solution of the Stokes system in $D^0$ with some boundary condition on $\Gamma^0=\partial D^0$. Recall that the probability measure $\mathbb{P}$ is assumed to be stationary, i.e., $x_1$-translation and $\theta$-rotation invariant. Hence $\E[v^\e_{\rm bl}]$ is invariant under $x_1$-translations and $\theta$-rotations. Then one can check that, solving the Stokes system in $D^0$ explicitly, $\E[v^\e_{\rm bl}]$ takes the form of
\begin{equation}\label{eq.Evbl}
    \E[v^\e_{\rm bl}]
    = \e(\alpha + \beta r^2) e_1
    \quad \text{in} \mkern9mu D^0
\end{equation}
for some constants $\alpha,\beta$ with $|\alpha|+|\beta| \le C$ where $C$ is independent of the element of $\Omega^\e$. Thus $\E[v^\e_{\rm bl}]$ has a natural extension to the whole space $\R^3$, which will be denoted by $\E[v^\e_{\rm bl}]$ again. Hereafter, we will adopt this convention and use the following estimate: 
\begin{align}\label{est.Evbl}
\begin{split}
    \|\E[v^\e_{\rm bl}]\|_{L^\infty(D^\e)} \lesssim \e.
\end{split}
\end{align}  

To further reduce the free parameters and determine the structure of $\E[v^\e_{\rm bl}]$, we take advantage of the flux condition on $v^\e_{\rm bl}$ to show the following property.
\begin{proposition}\label{prop.blFlux}
    For sufficiently small $\e$, we have
    \begin{equation*}
        \int_{D^0 \cap \{ 0<x_1<1 \} } e_1\cdot\E[v^\e_{\rm bl}]
        = O(\e^2).
    \end{equation*}
\end{proposition}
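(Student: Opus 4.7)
My plan is to avoid using the explicit representation \eqref{eq.Evbl} and instead prove the stronger deterministic statement $\int_{D^0\cap\{0<x_1<1\}} v^\e_{\rm bl}\cdot e_1\,dx = O(\e^2)$, which then implies the claim by Fubini after taking expectation. The starting point is the normalized flux condition \eqref{eq.determineBL}: setting $V^\e := U^0 - v^\e_{\rm bl}$ we have $\Phi^\e[V^\e]=\pi/2$. Applying \eqref{def.flux} with a smooth approximation of the indicator of $[0,1]$ gives $\int_{D^\e_{0,1}} V^\e\cdot e_1\,dx = \pi/2$. A direct cylindrical-coordinate computation also yields $\int_{D^0_{0,1}} U^0\cdot e_1\,dx = \pi/2$. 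Decomposing $D^\e_{0,1} = D^0_{0,1}\cup (D^\e\setminus D^0)_{0,1}$ and combining the two identities reduces everything to showing
\[
    \int_{(D^\e\setminus D^0)_{0,1}} V^\e\cdot e_1\,dx = O(\e^2).
\]

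The main obstacle is precisely this thin-layer estimate, because the boundary is only of John type and no pointwise control of $V^\e$ near $\Gamma^\e$ is available. I would split $V^\e = U^0 - v^\e_{\rm bl}$ and handle each piece separately. The thin region has volume $\lesssim \e$, and on it $|U^0| = |1-r^2|\lesssim \e$, so the $U^0$-contribution is immediately $O(\e^2)$. For the $v^\e_{\rm bl}$-contribution I would invoke the large-scale size estimate \eqref{est.bdry-size} from Proposition \ref{prop.nbl}, namely $\bigl(\dashint_{B_{\e,+}(x)}|v^\e_{\rm bl}|^2\bigr)^{1/2}\lesssim \e$ for $x\in D^\e$ with $r_x>1-4\e$. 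Covering $(D^\e\setminus D^0)_{0,1}$ by $O(\e^{-2})$ such balls (a number comparable to the surface area of $\Gamma^\e\cap\{0<x_1<1\}$ divided by $\e^2$) then yields $\|v^\e_{\rm bl}\|^2_{L^2((D^\e\setminus D^0)_{0,1})} \lesssim \e^{-2}\cdot\e^3\cdot\e^2 = \e^3$, i.e.\ $\|v^\e_{\rm bl}\|_{L^2((D^\e\setminus D^0)_{0,1})} \lesssim \e^{3/2}$. A Cauchy-Schwarz against the $O(\e^{1/2})$ square root of the thin region's volume then produces the desired $O(\e^2)$ bound.

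All constants involved depend only on the John-domain parameters $(L,2)$ and are therefore uniform in $D^\e \in \Omega^\e$. Consequently the bound is deterministic, and interchanging the spatial integral with $\E$ via Fubini finishes the argument. I note that stationarity enters only implicitly, through the well-definedness of $\E[v^\e_{\rm bl}]$ on $D^0$ used in the statement; the quantitative estimate itself requires nothing more than the flux normalization and the size bounds of Proposition \ref{prop.nbl}.
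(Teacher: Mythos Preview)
Your proof is correct and follows essentially the same route as the paper: both arguments combine the flux normalization $\Phi^\e[V^\e]=\pi/2$ with the explicit computation $\int_{D^0_{0,1}}U^0\cdot e_1=\pi/2$ to reduce to the thin-layer integral over $(D^\e\setminus D^0)_{0,1}$, and then use the large-scale size estimate \eqref{est.bdry-size} together with a covering by $O(\e^{-2})$ balls and Cauchy--Schwarz to bound that integral by $O(\e^2)$. The only cosmetic difference is that the paper estimates $V^\e$ directly on the thin layer (since both $U^0$ and $v^\e_{\rm bl}$ satisfy the same $O(\e)$ large-scale bound there), whereas you split $V^\e=U^0-v^\e_{\rm bl}$ and treat the two pieces separately.
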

\begin{proof}
    Recall that $v^\e_{\rm bl}$ is defined in \eqref{def.NBL} and $V^\e$ is chosen as the solution of the Stokes system \eqref{eq.HP} satisfying \eqref{eq.determineBL}. We consider the decomposition
    \begin{equation}\label{eq.PhieUe}
    \begin{aligned}
        \int_{D^\e \cap \{ 0<x_1<1 \} } e_1 \cdot V^\e
        = \int_{(D^\e \setminus D^0) \cap \{ 0<x_1<1 \} } e_1 \cdot V^\e + \int_{D^0 \cap \{ 0<x_1<1 \} } e_1 \cdot V^\e.
    \end{aligned}
    \end{equation}
    Using the fact that $1-r^2 = O(\e)$ for $ 1-\e < r<1+\e$ and the boundary size estimate of $v^\e_{\rm bl}$ (the first estimate in \eqref{est.bdry-size}), we see that $V^\e=U^0-v^\e_{\rm bl}$ satisfies, for all $x\in D^\e \setminus D^0$, 
    \begin{equation*}
        \bigg( \dashint_{B_{\e,+}(x)} |V^\e|^2 \bigg)^{1/2} \lesssim \e.
    \end{equation*}
    Thus, by the H\"{o}lder inequality and a simple covering argument, we have
    \begin{equation}\label{est.UeFlux.bdry}
        \bigg| \int_{(D^\e \setminus D^0) \cap \{ 0<x_1<1 \} } e_1 \cdot V^\e \bigg| \lesssim \e^2.
    \end{equation}
    On the other hand, by $V^\e=U^0-v^\e_{\rm bl}$ again, we have
    \begin{equation}
        \int_{D^0 \cap \{ 0<x_1<1 \} } e_1 \cdot V^\e = \int_{D^0 \cap \{ 0<x_1<1 \} } e_1 \cdot U^0 - \int_{D^0 \cap \{ 0<x_1<1 \} } e_1 \cdot v^\e_{\rm bl}.
    \end{equation}
    Then obviously \eqref{eq.determineBL} yields
    \begin{equation}\label{eq.Flux.u0=Ue}
        \int_{D^0 \cap \{ 0<x_1<1 \} } e_1 \cdot U^0 = \Phi^0[V^0] = \Phi^\e[V^\e].
    \end{equation}
    Combining \eqref{eq.PhieUe}--\eqref{eq.Flux.u0=Ue}, we obtain
    \begin{equation}
        \bigg| \int_{D^0 \cap \{ 0<x_1<1 \} } e_1 \cdot v^\e_{\rm bl} \bigg| \lesssim \e^2,
    \end{equation}
    which implies the desired result.
\end{proof}

Now by \eqref{eq.Evbl} and Proposition \ref{prop.blFlux}, we must have
\begin{equation}
    \E[v^\e_{\rm bl}] = \e (\alpha - 2\alpha r^2) e_1 + O(\e^2),
\end{equation}
as the leading term of order $O(\e)$ has flux zero. 
If we denote
\begin{equation}\label{def.U1}
    U^1 = U^1(r) = (1 - 2r^2) e_1,
\end{equation}
then precisely we have
\begin{equation}\label{est.Evbl-eavbar}
    \|\E[v^\e_{\rm bl}] - \e \alpha U^1 \|_{L^2_{\rm uloc}(D^0)} 
    \lesssim \e^2
\end{equation}
and
\begin{equation}\label{est.DEvbl-Deavbar}
    \|\nabla \E[v^\e_{\rm bl}] - \nabla (\e \alpha U^1) \|_{L^2_{\rm uloc}(D^0)} 
    \lesssim \e^2.
\end{equation}
We point out that the implicit parameter $\alpha$ depends on the probability space $(\Omega^\e, \mathcal{F}^\e, \PP)$.

The above asymptotic structure of $\E[v^\e_{\rm bl}]$ allows us to obtain the Navier's wall law at the formal level; see also Remark \ref{rem.thmb}. Indeed, as is explained in the introduction, $u^\e_{\rm app}$ defined by \eqref{Intro.uapp} will be our higher-order approximation in $D^\e$ with no-slip boundary condition on $\Gamma^\e$. To get the effective Navier-Stokes system and Navier's slip condition, we examine
\begin{equation}
    \E[u^\e_{\rm app}] = u^0 - \frac{2\phi}{\pi} \E[v^\e_{\rm bl}] = u^0 - \frac{2\phi}{\pi} \e \alpha U^1 + O(\e^2)
    \quad \text{in} \mkern9mu L^2_{\rm uloc}(D^0)^3.
\end{equation}
Note that $U^1$ satisfies the non-penetration condition on $\Gamma^0$ and has zero flux in $D^0$. Define
\begin{equation}\label{def.u1}
    u^1(r)
    = -\frac{2\phi}{\pi} U^1(r)
    = -\frac{2\phi}{\pi} (1 - 2r^2) e_1, 
\end{equation}
where the notation is aligned with the Hagen-Poiseuille flow in \eqref{eq.HPflow1}, and
\begin{equation}\label{def.uN}
    u^{\rm N}(r) 
    = u^0(r) + \e \alpha u^1(r)
    = \frac{2\phi}{\pi}
    \big(
    (1 - r^2) - \e \alpha (1 - 2r^2)
    \big)
    e_1.
\end{equation}
Then it is easy to check
\begin{equation}
    \E[u^\e_{\rm app}] = u^{\rm N} + O(\e^2) \quad \text{in} \mkern9mu L^2_{\rm uloc}(D^0)^3,
\end{equation}
and that $u^{\rm N}$ solves the effective problem \eqref{eq.uN} with some pressure $p^{{\rm N}}$. Using the explicit form \eqref{def.uN}, one can compute the slip lengths $\lambda^\e$ as
\begin{equation}
    \lambda^\e = \frac{\e \alpha}{1 - 2\e \alpha}.
\end{equation}
Consequently, $\lambda^\e$ is a constant determined by the probability space $(\Omega^\e,\mathcal{F}^\e,\PP)$ and independent of the flux $\phi$. Moreover, $u^{\rm N}$ is a unique bounded solution when $\phi,\e$ are small.

    \subsection{Decay of correlation}\label{Corr}

In this section, we study the correlation between the normalized boundary layers, each of which corresponds to a rough domain $D^\e$. The mapping from a rough domain or its identified boundary to the corresponding boundary layer is highly nonlinear, determined implicitly by the system \eqref{eq.1stBL} and the uniqueness condition \eqref{eq.determineBL}. Thus it is of great importance to understand how sensitive a boundary layer is to the perturbation of a boundary. In the following, we will show a sensitivity estimate that reveals the decay of correlation between the boundary layers. As a relevant result, we refer to \cite[Proposition 8]{Ger09} for the decay of correlation between boundary layers in bumpy half-spaces with random roughness. 

Let $D^\e, \widetilde{D}^\e \in \Omega^\e$ and $\Gamma^\e = \partial D^\e, \widetilde{\Gamma}^\e = \partial \widetilde{D}^\e$. Fix $z\in \Gamma^0=\partial D^0$. Assume $\Gamma^\e = \widetilde{\Gamma}^\e$ outside of the cylindrical cube $S_{\e}(z)$ defined in \eqref{def.cyl.cube}. 
Let $v^\e_{\rm bl}$ and $\tilde{v}^\e_{\rm bl}$ be the boundary layers constructed by \eqref{eq.determineBL}--\eqref{def.NBL} corresponding to $D^\e$ and $\widetilde{D}^\e$, respectively. From the construction, we see that, for $\Sigma^\e_t= D^\e \cap \{ x_1 = t\}$ not intersecting with $S_{\e}(z)$,
\begin{equation}\label{eq.v-tv.0flux}
    \int_{\Sigma^\e_{t}} e_1\cdot ({v}^\e_{\rm bl} - \tilde{v}^\e_{\rm bl}) \dd \sigma = 0.
\end{equation}

\begin{proposition}\label{prop.correlation} 
    For $x\in D^\e$ with $r_x<1-8\e$, we have
    \begin{equation}\label{est.vbl-tvbl}
        | v^\e_{\rm bl}(x) - \tilde{v}^\e_{\rm bl}(x)| 
        \le \frac{C\e^3(1-r_x) e^{-c|x_1-z_1|}}{|x-z|^3}
    \end{equation}
    and
    \begin{equation}\label{est.Dvbl-Dtvbl}
        |\nabla  v^\e_{\rm bl}(x) - \nabla \tilde{v}^\e_{\rm bl}(x)| 
        \le \frac{C\e^3 e^{-c|x_1-z_1|}}{|x-z|^3}.
    \end{equation}
    The constant $C$ is independent of $D^\e, \widetilde{D}^\e$.
\end{proposition}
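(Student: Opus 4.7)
The strategy is to localize the difference $w:=v^\e_{\rm bl}-\tilde{v}^\e_{\rm bl}$ away from the perturbation region $S_\e(z)$ and then apply the Green's function representation from Section \ref{Green}. On the common subdomain $D^\e\cap\widetilde{D}^\e$, the pair $(w,q)$ with $q:=p^\e_{\rm bl}-\tilde{p}^\e_{\rm bl}$ solves the homogeneous Stokes system; moreover $w$ vanishes on the shared boundary $\Gamma^\e\setminus S_\e(z)=\widetilde{\Gamma}^\e\setminus S_\e(z)$, and by \eqref{eq.v-tv.0flux} has zero flux across every cross-section $\Sigma^\e_t$ with $|t-z_1|>\e$. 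Next I would fix a smooth cutoff $\chi$ on $\R^3$ with $\chi\equiv 0$ on $\overline{S_\e(z)}$, $\chi\equiv 1$ off $S_{2\e}(z)$, and $|\nabla^k\chi|\lesssim\e^{-k}$, and apply Bogovskii's operator on the shell $\Omega_\e:=(S_{2\e}(z)\setminus\overline{S_\e(z)})\cap D^\e$ to obtain $\phi$ compactly supported in $\Omega_\e$ with $\nabla\cdot\phi=\nabla\chi\cdot w$ and $\|\nabla\phi\|_{L^2}+\|\nabla^2\phi\|_{L^2}\lesssim \e^{-1}\|w\|_{L^2(\Omega_\e)}+\|\nabla w\|_{L^2(\Omega_\e)}$. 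Writing $\bar{w}:=\chi w-\phi$ (extended by zero into $D^\e\cap S_\e(z)$) and $\bar{q}:=\chi(q-\bar{q}_0)$, where $\bar{q}_0$ is the average of $q$ over a slightly enlarged John-type shell containing $\Omega_\e$, one checks that $\bar{w}\in H^1(D^\e)^3$ is divergence-free, vanishes on $\Gamma^\e$, satisfies $\Phi^\e[\bar{w}]=0$, and solves $-\Delta\bar{w}+\nabla\bar{q}=R$ in $D^\e$ with $R$ supported in $\Omega_\e$.

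Since $\Omega_\e\subset\{r>1-2\e\}$, the boundary size estimates of Proposition \ref{prop.nbl} yield $\|w\|_{L^2(\Omega_\e)}\lesssim\e^{5/2}$ and $\|\nabla w\|_{L^2(\Omega_\e)}\lesssim\e^{3/2}$, while the standard pressure bound for the Stokes system on a John domain (Lemma \ref{lem.p2uF}, as invoked in the proof of Lemma \ref{lem.exp2bdd}) gives $\|q-\bar{q}_0\|_{L^2}\lesssim\|\nabla w\|_{L^2}\lesssim\e^{3/2}$. Combining these with $|\nabla^k\chi|\lesssim\e^{-k}$ and the Bogovskii bounds, one obtains $\|R\|_{L^2(D^\e)}\lesssim\e^{1/2}$. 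For any $x$ with $r_x<1-8\e$ the elementary bound $|x-z|\ge 1-r_x=\delta(x)>8\e$ (valid since $r_z=1$) implies $\bar{w}(x)=w(x)$ and $x$ lies outside $S_{2\e}(z)$, so the Green's representation from Section \ref{Green} gives
\begin{equation*}
    w(x)=\int_{D^\e}G^\e(x,y)\,R(y)\,\dd y,\qquad \nabla w(x)=\int_{D^\e}\nabla_x G^\e(x,y)\,R(y)\,\dd y.
\end{equation*}
Applying Cauchy--Schwarz, using $\delta(y)\lesssim\e$ and $|x-y|\approx|x-z|$ on $\Omega_\e$, and invoking \eqref{est.G.bdry} (respectively \eqref{est.Grenn-4} for the gradient, with \eqref{est.G.interior} handling any $y$ with $r_y\le 1-\e$, which contributes the same order), one finds $\|G^\e(x,\cdot)\|_{L^2(\Omega_\e)}\lesssim e^{-c|x_1-z_1|}\delta(x)\e^{5/2}|x-z|^{-3}$ and $\|\nabla_x G^\e(x,\cdot)\|_{L^2(\Omega_\e)}\lesssim e^{-c|x_1-z_1|}\e^{5/2}|x-z|^{-3}$. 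Multiplying each by $\|R\|_{L^2}\lesssim\e^{1/2}$ produces \eqref{est.vbl-tvbl} and \eqref{est.Dvbl-Dtvbl} respectively.

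The principal obstacle is the scale-invariant Bogovskii construction on the rough shell $\Omega_\e$ with the sharp $\e$-dependent bounds above, together with the accompanying pressure bound on the same set. After rescaling by $\e^{-1}$, both reduce to standard statements on the unit-scale set $(S_2\setminus\overline{S_1})$ intersected with the rescaled rough domain; this intersection is a John domain with universal constants thanks to the $\e$-scale John property in Definition \ref{def.John2} combined with the smooth Cartesian geometry of $S_2\setminus\overline{S_1}$, and the required $L^2$-solvability of the divergence equation then follows from the Acosta--Dur\'an--Muschietti theorem \cite{ADM06}. A secondary technical point is the preservation of the zero-flux condition $\Phi^\e[\bar{w}]=0$ under this cutoff-plus-Bogovskii construction, which is immediate from \eqref{eq.v-tv.0flux} and the compact support of $\phi$ in $\Omega_\e$.
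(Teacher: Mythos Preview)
Your approach is essentially the same as the paper's: localize $v^\e_{\rm bl}-\tilde{v}^\e_{\rm bl}$ with a cutoff supported on $S_{2\e}(z)\setminus S_\e(z)$, correct the divergence via Bogovskii, and represent the result by the Green's function of Section~\ref{Green}. A few technical choices differ, and two of them deserve care.

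First, the paper applies Bogovskii on the John domain $S^*_{2\e}(z)$ supplied directly by Definition~\ref{def.John2}, not on the shell $\Omega_\e=(S_{2\e}(z)\setminus\overline{S_\e(z)})\cap D^\e$. This sidesteps your ``principal obstacle'' entirely: no argument is needed that the rough shell is itself a John domain with uniform constants. Your reduction to the unit-scale set $(S_2\setminus\overline{S_1})\cap\e^{-1}D^\e$ is plausible, but not literally covered by Definition~\ref{def.John2} or \cite{ADM06}, and the compatibility condition $\int_{\Omega_\e}\nabla\chi\cdot w=0$ (which you need for Bogovskii with zero boundary data) should be checked explicitly via the flux identity \eqref{eq.v-tv.0flux}.

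Second, you invoke an $H^2$ Bogovskii bound on this rough set in order to control $\|\Delta\phi\|_{L^2}$ inside $\|R\|_{L^2}$. Your stated bound $\|\nabla^2\phi\|_{L^2}\lesssim\e^{-1}\|w\|_{L^2}+\|\nabla w\|_{L^2}$ is off by a factor of $\e$ in each term; the correct scaling is $\|\nabla^2\phi\|_{L^2}\lesssim\e^{-2}\|w\|_{L^2}+\e^{-1}\|\nabla w\|_{L^2}$, which still yields $\|R\|_{L^2}\lesssim\e^{1/2}$ after inserting the size estimates. More importantly, higher-order Bogovskii bounds on John domains are not standard. The paper avoids this by keeping the Laplacian term as $\int G(x,\cdot)\,\Delta\hat{v}$ and integrating by parts once, so that only $\|\nabla\hat{v}\|_{L^2}\lesssim\e^{3/2}$ (the $H^1$ Bogovskii bound) and $\|\nabla_yG(x,\cdot)\|_{L^2}$ from \eqref{est.DyG.bdry} are needed. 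For the gradient estimate \eqref{est.Dvbl-Dtvbl}, the paper does not use \eqref{est.Grenn-4} as you do; instead it applies the interior Lipschitz estimate for Stokes on $B_{(1-r_x)/2}(x)$ to the already-proved pointwise bound on $v^\e_{\rm bl}-\tilde{v}^\e_{\rm bl}$, which is a shorter route once \eqref{est.vbl-tvbl} is in hand.
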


\begin{proof}
    First we note that it suffices to prove the estimates for $|x-z|> 20\e$ since the size estimates in Proposition \ref{prop.ubl} directly gives the desired estimates for $|x-z|\le 20\e$.

    To simplify notation, we let $v = v^\e_{\rm bl}$ and $\tilde{v} = \tilde{v}^\e_{\rm bl}$ with the associated pressures $q$ and $\tilde{q}$, respectively.
    Let $\zeta$ be a smooth cut-off function such that $\zeta = 0$ in $S_\e(z)$, $\zeta = 1$ in $D^\e \setminus S_{2\e}(z)$ and $|\nabla \zeta| \le C\e^{-1}$. Thus the support of $\nabla \zeta$ is contained in $D^\e \cap S_{2\e}(z)$. Define
    \begin{equation}
        V = ( v - \tilde{v}) \zeta,
        \qquad
        Q = (q - \tilde{q} - q_0 )\zeta
    \end{equation}
    with arbitrary $q_0\in \R$. Then it is easy to verify that $(V,Q)$ satisfies
    \begin{equation}\label{eq.VQ}
    \left\{
        \begin{array}{ll}
        -\Delta V+\nabla Q = -\nabla (v - \tilde{v})\cdot \nabla \zeta + (q - \tilde{q} - q_0) \nabla \zeta &\mbox{in}\ D^\e,\\
        \nabla\cdot V = (v - \tilde{v})\cdot \nabla \zeta &\mbox{in}\ D^\e,\\
        V = 0 &\mbox{on}\ \Gamma^\e.
    \end{array}\right.
    \end{equation}
    Note that
    \begin{equation}\label{eq.V1}
    V = {v} - \tilde{v}
    \quad \text{in} \mkern9mu \{r<1-2\e\}.
    \end{equation} 
    To handle the nonvanishing divergence, we introduce a correction term $\hat{v}$ which solves
    \begin{equation}\label{eq.div-v}
    \left\{
        \begin{array}{ll}
        \nabla\cdot \hat{v} = ({v} - \tilde{v})\cdot \nabla \zeta &\mbox{in}\ S_{2\e}^*(z),\\
        \hat{v} = 0 &\mbox{on}\ \partial S_{2\e}^*(z).
    \end{array}\right.
    \end{equation}
    Here $S^*_{2\e}(z)$ refers to a John domain such that, according to Definition \ref{def.John2}, 
    \begin{equation}\label{def.S2east}
        D^\e \cap S_{2\e}(z) \subset S^*_{2\e}(z) \subset D^\e \cap S_{4\e}(z).
    \end{equation}
    It follows from Lemma \ref{lem.Bogovski} that \eqref{eq.div-v} has a solution $\hat{v} \in H_0^1(S^*_{2\e}(z))^3$ satisfying
    \begin{equation}\label{est.hatv}
        \| \nabla \hat{v}\|_{L^2(S^*_{2\e}(z))} \lesssim \| ({v} - \tilde{v})\cdot \nabla \zeta\|_{L^2(S^*_{2\e}(z))} \lesssim \e^{3/2}.
    \end{equation}
    We extend $\hat{v}$ by zero from $S^*_{2\e}(z)$ to the entire domain $D^\e$. Clearly, we have $\hat{v} \in H^1_0(D^\e)^3$.

    Define $(\widehat{V},\widehat{Q}) = (V - \hat{v}, Q)$. Observe that
    \begin{equation}\label{eq.V2}
    \widehat{V} = V
    \quad \text{in} \mkern9mu \{r<1-4\e\}
    \end{equation}
    and that
    \begin{equation}\label{eq.hat-VQ}
    \left\{
        \begin{array}{ll}
        -\Delta \widehat{V} + \nabla \widehat{Q} = \Delta \hat{v} -\nabla (v - \tilde{v})\cdot \nabla \zeta + (q - \tilde{q} - q_0) \nabla \zeta &\mbox{in}\ D^\e,\\
        \nabla\cdot \widehat{V} = 0 &\mbox{in}\ D^\e,\\
        \Phi^\e[\widehat{V}] = 0, & \\
        \widehat{V} = 0 &\mbox{on}\ \Gamma^\e.
    \end{array}\right.
    \end{equation}
    We point out that the flux condition $\Phi^\e[\widehat{V}] = 0$ follows from $\nabla\cdot \widehat{V} = 0$ and \eqref{eq.v-tv.0flux}.
    Then we estimate $\widehat{V}$ to prove \eqref{est.vbl-tvbl}. The Green's representation gives
    \begin{equation*}
    \begin{aligned}
        \widehat{V}(x) & = \int_{D^\e} G(x,\cdot) \big(\Delta \hat{v} -\nabla ({v} - \tilde{v})\cdot \nabla \zeta + (q - \tilde{q} - q_0) \nabla \zeta \big) \\
        & =: V^{(1)} + V^{(2)} + V^{(3)},
    \end{aligned}
    \end{equation*}
    where $V^{(i)}$ is the integral defined according to each of the three terms in the parenthesis. The above formula holds for any $q_0 \in \R$ since $\nabla_y \cdot G^T(x,y) = 0$.

    \textbf{Estimate of $V^{(1)}$.} Recall that $\hat{v}$ is supported in the John domain $S^*_{2\e}(z)$ in \eqref{def.S2east}. It follows from integration by parts and the Cauchy-Schwarz inequality that
    \begin{equation*}
    \begin{aligned}
        |V^{(1)}(x)| & = \bigg| \int_{D^\e} G(x,\cdot) \Delta \hat{v} \bigg| = \bigg| \int_{D^\e} \nabla G(x,\cdot) \cdot \nabla \hat{v} \bigg| \\
        & \le \| \nabla G(x,\cdot) \|_{L^2(S^*_{2\e}(z))} \| \nabla \hat{v} \|_{L^2(S^*_{2\e}(z))} \\
        & \lesssim \frac{\e^3 (1-r_x) e^{-c|x_1 -z_1|}}{|x-z|^3},
    \end{aligned}
    \end{equation*}
    where we have used \eqref{est.DyG.bdry} and \eqref{est.hatv} in the last inequality. Notice that \eqref{est.DyG.bdry} is actually applicable here by a simple covering argument.

    \textbf{Estimate of $V^{(2)}$.} Recall that $\nabla \zeta$ is supported in $S_{2\e}(z)$ and $|\nabla \zeta| \le C\e^{-1}$. It follows from the Cauchy-Schwarz inequality that
    \begin{equation*}
    \begin{aligned}
        |V^{(2)}(x)| 
        & = \bigg| \int_{D^\e} G(x,\cdot) \big(-\nabla ({v} - \tilde{v})\cdot \nabla \zeta \big) \bigg| \\
        & \lesssim \e^2 \bigg( \dashint_{S_{2\e}(z)} |G(x,\cdot)|^2 \bigg)^{1/2}  \bigg( \dashint_{S_{2\e}(z)} |\nabla({v} - \tilde{v})|^2 \bigg)^{1/2} \\
        & \lesssim \frac{\e^3(1-r_x) e^{-c|x_1-z_1|}}{|x-z|^3},
    \end{aligned}
    \end{equation*}
    where we have used \eqref{est.G.bdry} and \eqref{est.bdry-size} in the last inequality.

    \textbf{Estimate of $V^{(3)}$.} This estimate involves the pressure and thus we first need the connection between the pressure and velocity. In view of Lemma \ref{lem.pressure}, one has
    \begin{equation}\label{est.pi-c}
        \inf_{c_1\in \R} \bigg( \dashint_{S^*_{2\e}(z)} |q - c_1|^2 \bigg)^{1/2} \lesssim \bigg( \dashint_{S^*_{2\e}(z)} |\nabla v|^2 \bigg)^{1/2}
    \end{equation}
    and
    \begin{equation}\label{est.tpi-c}
        \inf_{c_2\in \R} \bigg( \dashint_{\tilde{S}^*_{2\e}(z)} |\tilde{q} - c_2|^2 \bigg)^{1/2} \lesssim  \bigg( \dashint_{\tilde{S}^*_{2\e}(z)} |\nabla \tilde{v}|^2 \bigg)^{1/2},
    \end{equation}
    where $\tilde{S}^*_{2\e}(z)$ is a John domain defined in a similar manner for $S^*_{2\e}(z)$ in which $D^\e$ is replaced by $\widetilde{D}^\e$. It follows from the Cauchy-Schwarz inequality that
    \begin{equation*}
    \begin{aligned}
        &|V^{(3)}(x)| 
        = \inf_{q_0\in \R} \bigg| \int_{ D^\e } G(x, \cdot) (q - \tilde{q} - q_0) \nabla \zeta \bigg| \\
        & \lesssim \e^2 \bigg( \dashint_{S_{2\e}(z)} |G(x,\cdot)|^2 \bigg)^{1/2} \inf_{q_0\in \R } \bigg( \dashint_{S_{2\e}(z)} |q - \tilde{q} - q_0|^2 \bigg)^{1/2} \\
        & \lesssim \e^2 \bigg( \dashint_{S_{2\e}(z)} |G(x,\cdot)|^2 \bigg)^{1/2} \bigg\{ \bigg(\dashint_{S^*_{2\e}(z)} |\nabla v|^2 \bigg)^{1/2} + \bigg(\dashint_{S^*_{2\e}(z)} |\nabla \tilde{v}|^2 \bigg)^{1/2} \bigg\} \\
        & \lesssim \frac{\e^3(1-r_x) e^{-c|x_1-z_1|}}{|x-z|^3},
    \end{aligned}
    \end{equation*}
    where we have used \eqref{est.pi-c}--\eqref{est.tpi-c} in the third inequality, and \eqref{est.G.bdry} and \eqref{est.bdry-size} in the last.

    Combining the above estimates of $V^1,V^2$ and $V^3$, we obtain the desired estimate \eqref{est.vbl-tvbl}. Indeed, from \eqref{eq.V1} and \eqref{eq.V2}, we see that, for $x\in D^\e$ with $r_x<1-4\e$ and $|x-z|>20\e$, 
    \begin{equation}\label{est.V}
        |{v}(x) - \tilde{v}(x)|
        = |V(x)|
        = |\widehat{V}(x)| \lesssim \frac{\e^3(1-r_x) e^{-c|x_1-z_1|}}{|x-z|^3}.
    \end{equation}

    Next we prove \eqref{est.Dvbl-Dtvbl}. Observe that, by \eqref{eq.V1}, we have
    $\nabla {v} - \nabla \tilde{v} = \nabla V$ in $\{r<1-4\e\}$.
    We use the system \eqref{eq.VQ} and the estimate of $V$ obtained in \eqref{est.V}. Since $-\Delta V + \nabla Q = 0$ and $\nabla\cdot V = 0$ in $\{ r<1-4\e \}$, by fixing $x\in \{ r<1-8\e \}$, we consider the system in $B_{(1-r_x)/2}(x) \subset \{ r<1-4\e \}$. Then the interior Lipschitz estimate shows that
    \begin{equation*}
        |\nabla {v}(x) - \nabla \tilde{v}(x)|
        =
        |\nabla V(x)| \lesssim \frac{1}{1-r_x} \bigg( \dashint_{B_{(1-r_x)/2}(x)} |V|^2 \bigg)^{1/2} \lesssim \frac{\e^3 e^{-c|x_1-z_1|}}{|x-z|^3},
    \end{equation*}
    which implies the desired derivative estimate \eqref{est.Dvbl-Dtvbl}. This ends the proof.
\end{proof}

    \subsection{Concentration of boundary layers}\label{Conc}

Combining the functional inequalities in Definition \ref{def.funct.ineq} with the deterministic estimate for the decay of correlation in the previous section, we can show the concentration inequalities of the normalized boundary layers. The resulting estimates are summarized in Proposition \ref{prop.flu.vbl} below.

We first collect the estimates that hold independently of functional inequalities. Set 
\[
    D_{0,1}' = \{ r<1-8\e \} \cap \{ 0<x_1<1 \}.
\] 
Let $v^\e_{\rm bl}$ be the boundary layer constructed in Section \ref{Normal} and consider the random variable $Y_x = v^\e_{\rm bl}(x)$. For $x\in D^\e$ with $r_x<1-8\e$, Proposition \ref{prop.correlation} implies that, for any $z\in \Gamma^0=\partial D^0$,
\begin{equation}\label{est.osc.vbl}
    \underset{D^\e|_{S_{\e}(z)}}{\osc} [Y_x] \le \frac{C\e^3(1-r_x) e^{-c|x-z|}}{|x-z|^3}.
\end{equation}
For $x\in D^\e$ with $r_x > 1-8\e$, on the other hand, we have the deterministic large-scale estimates in \eqref{est.bdry-size} for which the randomness does not play a role. Indeed, the estimates in \eqref{est.bdry-size} in Proposition \ref{prop.ubl}, combined with \eqref{est.Evbl} and a covering argument (also see Remark \ref{rmk.L2size}), give
\begin{equation}\label{est.bl.smallsize}
    \int_{D^0_{0,1}\setminus D_{0,1}'} |v^\e_{\rm bl}(x)|^2 \dd x + \int_{D^0_{0,1}\setminus D_{0,1}'} |v^\e_{\rm bl}(x) - \E[v^\e_{\rm bl}]|^2 \dd x \le C\e^3.
\end{equation}
We also consider $Y_x' = \delta(x) \nabla v^\e_{\rm bl}(x)$. By the parallel argument as above, we have
\begin{equation}\label{est.osc.Dvbl}
    \underset{D^\e|_{S_{\e}(z)}}{\osc} [Y'_x] \le \frac{C\e^3(1-r_x) e^{-c|x-z|}}{|x-z|^3}
\end{equation}
and
\begin{equation}\label{est.Dbl.smallsize}
    \int_{D_{0,1}\setminus D'_{0,1}} \delta^2| \nabla v^\e_{\rm bl} |^2 + \int_{D_{0,1}\setminus D'_{0,1}} \delta^2| \nabla v^\e_{\rm bl} - \nabla \E[v^\e_{\rm bl}]|^2 \le C\e^3.
\end{equation}
Moreover, we consider the scalar random variable
\begin{equation}\label{def.X}
    X=
    \bigg(
    \int_{D'_{0,1}} |v^\e_{\rm bl} - \E[v^\e_{\rm bl}]|^2
    \bigg)^{1/2}.
\end{equation}
The triangle inequality and \eqref{est.osc.vbl} give
\begin{equation}\label{est.X-Q1}
\begin{aligned}
\underset{D^\e|_{S_{\e}(z)}}{\osc}[X]
& \le
\bigg(
\int_{Q'_{0,1}}
\Big|
\underset{D^\e|_{S_{\e}(z)}}{\osc} [v^\e_{\rm bl}(x)]
\Big|^2 \dd x
\bigg)^{1/2} \\
& \le C \bigg( \int_{Q'_{0,1}} \frac{\e^6(1-r_x)^2 e^{-2c|x-z|}}{|x-z|^6} \dd x \bigg)^{1/2} \\
& \le
C\e^{5/2} e^{-c |z|}.
\end{aligned}
\end{equation}
The estimates \eqref{est.osc.vbl}--\eqref{est.X-Q1} are independent of the functional inequalities in Definition \ref{def.funct.ineq}.

\begin{proposition}\label{prop.flu.vbl}
    The following hold.
    \begin{enumerate}[(1)]
    \item\label{item1.prop.flu.vbl}
    Assume \eqref{ineq.SG}. Then there exists a constant $\beta_1>0$ such that we have
    \begin{equation}\label{est.vbl.SG}
        \sup_{k\in \R} \E
        \Big[
        \exp
        \Big(
        \beta_1 \e^{-3/2}
        \|v^\e_{\rm bl} - \E[v^\e_{\rm bl}]\|_{L^2(D^0_{k,k+1})}
        \Big)
        \Big]
        \le
        C
    \end{equation}
    and
    \begin{equation}\label{est.Dvbl.SG}
        \sup_{k\in \R} \E
        \Big[
        \exp
        \Big(
        \beta_1 \e^{-3/2}
        \|\delta( \nabla v^\e_{\rm bl} - \nabla \E[v^\e_{\rm bl}] )\|_{L^2(D^0_{k,k+1})}
        \Big)
        \Big]
        \le
        C.
    \end{equation}
    \item\label{item2.prop.flu.vbl}
    Assume \eqref{ineq.LSI}. Then there exists a constant $\beta_2>0$ such that we have
    \begin{equation}\label{est.vbl.LSI}
        \sup_{k\in \R} \E
        \Big[
        \exp
        \Big(
        \beta_2 \e^{-3}
        \|v^\e_{\rm bl} - \E[v^\e_{\rm bl}]\|^2_{L^2(D^0_{k,k+1})}
        \Big)
        \Big]
        \le
        C
    \end{equation}
    and
    \begin{equation}\label{est2.vbl.LSI}
        \sup_{k\in \R} \E
        \Big[
        \exp
        \Big(
        \beta_2 \e^{-3}
        \| \delta (\nabla v^\e_{\rm bl} - \nabla \E[v^\e_{\rm bl}]) \|^2_{L^2(D^0_{k,k+1})}
        \Big)
        \Big]
        \le
        C.
    \end{equation}
    \end{enumerate}
\end{proposition}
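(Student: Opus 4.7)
The plan is to reduce both parts to concentration of the scalar random variable $X$ defined in \eqref{def.X}, supported on the interior region $D'_{0,1}$, and to absorb the thin boundary strip $D^0_{0,1}\setminus D'_{0,1}$ deterministically via \eqref{est.bl.smallsize}. By the $x_1$-translation invariance in \eqref{Stationarity3D}, the law of $v^\e_{\rm bl} - \E[v^\e_{\rm bl}]$ restricted to $D^0_{k,k+1}$ is independent of $k$, so I fix $k=0$; the decomposition
\[
\|v^\e_{\rm bl} - \E[v^\e_{\rm bl}]\|_{L^2(D^0_{0,1})}^2 \le X^2 + C\e^3
\]
from \eqref{est.bl.smallsize} then reduces the first bound of each part to exponential/Gaussian moments of $X$. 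The crucial quantitative input is the oscillation estimate \eqref{est.X-Q1}, which after integration against $\dd\Gamma_z$ yields $\int_\Gamma (\osc_{S_\e(z)} X)^2\,\dd\Gamma_z \le C\e^5$.

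For part~\eqref{item1.prop.flu.vbl} I would first apply \eqref{ineq.SG} to $X$ itself to obtain $\var[X]\le C\e^3$, hence $\E X \le \|X\|_{L^2}\le C\e^{3/2}$. To promote this $L^2$ bound to exponential integrability, iterate \eqref{ineq.SG} on even powers $(X - \E X)^{2p}$. The elementary inequality $|a^p - b^p|\le p(|a|\vee|b|)^{p-1}|a-b|$, applied to the two perturbed values of $X - \E X$, combined with Cauchy--Schwarz and the deterministic smallness $\osc X\le C\e^{5/2}$ used to absorb $(|a|\vee|b|)^{p-1}$ into $|X - \E X|^{p-1}$ plus a negligible tail, gives a recursion bounding $\E[(X - \E X)^{2p}]$ in terms of $\E[(X - \E X)^{2p-2}]$ with gain $Cp\e^3$. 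Induction then yields $\E[(X - \E X)^{2p}]\le (Cp)^p\e^{3p}$ for every $p\ge 1$; summing the Taylor series for $\exp(\beta_1\e^{-3/2}|X - \E X|)$ and using $\E X\le C\e^{3/2}$ produces \eqref{est.vbl.SG}.

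For part~\eqref{item2.prop.flu.vbl} I would use the Herbst argument. For each $\lambda>0$ apply \eqref{ineq.LSI} to $F = e^{\lambda X/2}$; the mean-value estimate $\osc_{S_\e(z)}(e^{\lambda X/2})\le (\lambda/2)e^{\lambda X/2}e^{\lambda C\e^{5/2}/2}\osc_{S_\e(z)} X$, together with \eqref{est.X-Q1}, gives for $\lambda\e^{5/2}\le 1$
\[
\Ent[e^{\lambda X}]\le C\lambda^2\e^{-2}\E\Bigl[e^{\lambda X}\int_\Gamma (\osc_{S_\e(z)} X)^2\,\dd\Gamma_z\Bigr] \le C\lambda^2\e^3\,\E[e^{\lambda X}].
\]
Setting $H(\lambda) = \log\E[e^{\lambda X}]$, the identity $\Ent[e^{\lambda X}] = \lambda^2(H(\lambda)/\lambda)'\E[e^{\lambda X}]$ converts this into $(H(\lambda)/\lambda)'\le C\e^3$; integrating from zero yields $H(\lambda)\le\lambda\E X + C\lambda^2\e^3$, i.e., sub-Gaussian concentration $\E[e^{\lambda(X - \E X)}]\le e^{C\lambda^2\e^3}$. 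A Chernoff optimization across the admissible range $\lambda\sim\e^{-3/2}$ produces $\E[\exp(c(X - \E X)^2/\e^3)]\le C$, which combined with $\E X\le C\e^{3/2}$ yields \eqref{est.vbl.LSI}.

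The gradient bounds \eqref{est.Dvbl.SG} and \eqref{est2.vbl.LSI} follow by running both arguments on the companion random variable $\|\delta(\nabla v^\e_{\rm bl} - \nabla\E[v^\e_{\rm bl}])\|_{L^2(D'_{0,1})}$, whose oscillation has the same shape as in \eqref{est.X-Q1} thanks to \eqref{est.osc.Dvbl}, and whose boundary correction is controlled by the same deterministic $\e^3$ bound \eqref{est.Dbl.smallsize}. I expect the principal technical obstacle to be the moment iteration of part~\eqref{item1.prop.flu.vbl}: the induction only closes with the sharp polynomial growth $(Cp)^p$ (and hence only delivers exponential integrability at the optimal scale $\e^{3/2}$) if the deterministic smallness of $\osc X$ is fully exploited at each step to prevent a factor of $|X - \E X|^{p-1}$ from doubling; in the LSI case the analogous step is encapsulated cleanly by the Herbst differential inequality.
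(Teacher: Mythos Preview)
Your outline matches the paper's proof: reduce to $D'_{0,1}$ via stationarity and \eqref{est.bl.smallsize}, then for part~\eqref{item1.prop.flu.vbl} run a $p$-moment iteration under \eqref{ineq.SG} (the paper packages this as Proposition~\ref{prop.SG.p-moment}), and for part~\eqref{item2.prop.flu.vbl} use Herbst (Proposition~\ref{prop.LSI}). Two quantitative points need correction.

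In part~\eqref{item1.prop.flu.vbl} the recursion gain is $Cp^2\e^3$, not $Cp\e^3$: the bound $\osc[(X-\E X)^p]\lesssim p(|X-\E X|+\osc X)^{p-1}\osc X$ is \emph{squared} before integrating over $\Gamma$, so the right-hand side of \eqref{ineq.SG} picks up $p^2$. Iterating (and absorbing $(\E[(X-\E X)^p])^2$ via Young, which you omit) gives $\E[(X-\E X)^{2p}]\le (Cp^2\e^3)^p$, i.e.\ $\|\e^{-3/2}(X-\E X)\|_{2p}\lesssim p$; this sub-exponential rate is exactly what yields exponential integrability after Stirling. Your claimed $(Cp\e^3)^p$ would mean $\|\e^{-3/2}(X-\E X)\|_{2p}\lesssim\sqrt{p}$, a sub-Gaussian rate that would deliver Gaussian moments under \eqref{ineq.SG} alone---too strong in general. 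A separate small gap: $\var[X]\le C\e^3$ does not give $\E X\le C\e^{3/2}$, since $\var[X]$ does not control $\E[X^2]$. The paper instead uses Fubini, $\E[X^2]=\int_{D'_{0,1}}\var[Y_x]\,\dd x$, and bounds $\var[Y_x]$ by \eqref{ineq.SG} applied pointwise with \eqref{est.osc.vbl}; see \eqref{est.pointwiseVAR}--\eqref{est.2moment}.

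For part~\eqref{item2.prop.flu.vbl} your Herbst differential inequality is only valid in the range $\lambda\e^{5/2}\lesssim 1$, so the sub-Gaussian MGF bound $\E[e^{\lambda(X-\E X)}]\le e^{C\lambda^2\e^3}$ is restricted and Chernoff by itself does not produce $\E[\exp(c(X-\E X)^2/\e^3)]<\infty$. The paper closes this with the deterministic a.s.\ bound $|X-\E X|\le C\e$ from Proposition~\ref{prop.nbl}, which makes the tail probability vanish beyond the range where the Gaussian tail applies; one then integrates the layer-cake formula over the remaining finite window.
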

\begin{proof}
(\ref{item1.prop.flu.vbl})
Assume \eqref{ineq.SG}. We first prove \eqref{est.vbl.SG}. In view of the stationarity and the boundary estimate \eqref{est.bl.smallsize}, it suffices to restrict ourselves in $D_{0,1}'= \{ r<1-8\e \} \cap \{ 0<x_1<1 \}$. Using the assumption \eqref{ineq.SG} and the oscillation estimate \eqref{est.osc.vbl}, we have
\begin{equation}\label{est.pointwiseVAR}
\begin{aligned}
    \var[Y_x] 
    & \le \E\bigg[ C\e^{-2} \int_{\Gamma} \underset{D^\e|_{S_{\e}(z)}}{\osc} [Y_x]^2 \dd \Gamma_z \bigg] \\
     & \le C\int_{\Gamma} \frac{\e^4(1-r_x)^2 e^{-2c|x_1-z_1|}}{|x-z|^6} \dd z \\
     & \le \frac{C\e^4}{(1-r_x)^2}.
\end{aligned}
\end{equation}
Integrating it in $x$ over $D_{0,1}'$, we obtain
\begin{equation}\label{est.2moment}
    \int_{D_{0,1}'} \var[Y_x] \dd x = \E \bigg[ \int_{D_{0,1}'} | v^\e_{\rm bl} - \E[v^\e_{\rm bl}]|^2 \bigg] = \E[X^2] \le C\e^3.
\end{equation}
Hence, by \eqref{est.2moment} and the Jensen's inequality, we see that 
\begin{equation}\label{est.EX}
\E[X] \le \E[X^2]^{1/2} \le C\e^{3/2}.
\end{equation}

With the bound \eqref{est.EX} at our disposal, we apply \eqref{ineq.SG} directly to $X$ defined in \eqref{def.X}. By the estimate of $p$-moments in Proposition \ref{prop.SG.p-moment} and \eqref{est.X-Q1}, we have, for any $p \ge 1$,
\begin{align*}
\begin{split}
& \E[|X - \E[X]|^{2p}]\\ 
&\le (Cp^{2})^p \E\bigg[ \int_{\Gamma} \e^{-2} \Big(
\underset{D^\e|_{S_{\e}(z)}}{\osc} [X]
\Big)^{2p} \dd \Gamma_z \bigg] \\
&\quad + (Cp^{2})^p \E \bigg[ \bigg( \int_\Gamma \Big( \e^{-1}
\underset{D^\e|_{S_{\e}(z)}}{\osc} [X] 
\Big)^2 \dd \Gamma_z\bigg)^p \bigg]
\\ 
&\le
(C p^2 \e^3)^p, 
\end{split}
\end{align*}
resulting in
\begin{align*}
\begin{split}
\E[(\e^{-3/2} |X-\E[X]|)^{2p}]
\le
C^{p}
p^{2p}
=
(2^{-1}C^{1/2})^{2p}
(2p)^{2p}.
\end{split}
\end{align*}
After replacing $2p$ by $p$ (thus $p \ge 2$), we have, for any $s>0$,
\begin{align}\label{est.X.geomseries}
\E
\bigg[
\frac{(s\e^{-3/2} |X-\E[X]|)^{p}}{p!}
\bigg]
\le
\frac{p^p}{p!}
(2^{-1}sC^{1/2})^{p},
\end{align}
which also holds for $1\le p\le 2$ by the Jensen's inequality. Hence, by the Stirling's formula
\[
p! \ge \sqrt{2\pi p} \big(\frac{p}{e} \big)^p
\]
and by choosing $s = \beta_1>0$ small enough, we see that the right side of \eqref{est.X.geomseries} defines a convergent geometric series with integers $p \in \N$. Therefore, summing \eqref{est.X.geomseries} over $p\in \N$, we have
\begin{equation*}
    \E \big[ \exp( \beta_1 \e^{-3/2} |X - \E[X]|) \big] \le C.
\end{equation*}
This combined with \eqref{est.EX} and the triangle inequality leads to the desired estimate \eqref{est.vbl.SG}. The proof of \eqref{est.Dvbl.SG} is exactly the same as above using \eqref{est.osc.Dvbl}--\eqref{est.Dbl.smallsize} and
\begin{equation*}
    \int_{ D'_{0,1}} \var[Y'_x] \dd x = \E \bigg[ \int_{ D_{0,1}'} \delta^2| \nabla v^\e_{\rm bl} - \nabla \E[v^\e_{\rm bl}]|^2 \bigg] \le C\e^3.
\end{equation*}

(\ref{item2.prop.flu.vbl})
Assume \eqref{ineq.LSI}. Notice that $\e$ can be assumed to be small in the following, since the statement is trivial if $\e$ is not small. We first prove \eqref{est.vbl.LSI}. It follows from \eqref{est.X-Q1} that
\begin{equation}
    \sup_{D^\e} \int_{\Gamma} \Big(
\underset{D^\e|_{S_{\e}(z)}}{\osc} [X] 
\Big)^2 \dd \Gamma_z \le C_1\e^5
\end{equation}
and
\begin{equation}
    \sup_{D^\e} \sup_{z\in \Gamma}
\underset{D^\e|_{S_{\e}(z)}}{\osc} [X] \le C_1\e^{5/2}.
\end{equation}
Thus the assumptions of Proposition \ref{prop.LSI} hold with $(L')^2=C_1\e^5$ and $L=C_1\e^{5/2}$. Hence, by the concentration bound \eqref{eq:gaussdecay} with $t$ replaced by $\e^{3/2} \eta t$ for $\eta>0$, one has that 
$$
    \PP[|X - \E[X]| \geq \e^{3/2} \eta t] \leq \inf_{\lambda>0} \exp(-\e^{3/2} \eta t \lambda + C_2\e^{1/2} \lambda (e^{C_2\e^{5/2}\lambda}-1)),
$$
where $C_2:=\max\{1,C_0,C_1\}$. Thus, setting $\lambda  = C_2^{-1}\e^{-3/2} t$ leads to
$$
  \PP[\e^{-3/2}|X - \E[X]| \geq \eta  t] \leq \exp(- C_2^{-1} \eta t^2 + \e^{-1} t(e^{\e t}-1)).
$$
Since  $e^{a}-1\leq  (e-1) a$ for $0<a\leq 1$, we see that, when $t \leq \e^{-1}$, 
$$
\exp(- C_2^{-1} \eta t^2 + \e^{-1} t(e^{\e t}-1))
\le
\exp(- C_2^{-1} \eta t^2 +(e-1) t^2).
$$
Therefore, choosing $\eta>0$ so that $\eta > 2C_2(e-1)$, we have, for $t \leq \e^{-1}$, 
$$
  \PP[\e^{-3/2}|X - \E[X]| \geq \eta  t] \leq \exp\big(-\frac{\eta}{2C_2} t^2\big).
$$
On the other hand, by Proposition \ref{prop.nbl}, we have that $|X- \E[X]| \leq C\e$ almost surely for some $C>0$ independent of $\e$. As a result, we have, for $t>\e^{-1}$ with $\e \ll 1$, 
$$
  \PP[\e^{-3/2}|X - \E[X]| \geq \eta  t] = 0.
$$
Consequently, choosing $\beta_2 = \eta^{-2}$, we see that \eqref{est.vbl.LSI} follows from
$$
\begin{aligned}
    &\E\big[\exp( \beta_2 \e^{-3} |X - \E[X]|^2 )\big] \\
    &=
    \E\big[\exp( \eta^{-2}( \e^{-3/2}|X - \E[X]|)^2 )\big] \\
    & = \int_0^\infty 
    \Big(\frac{\dd}{\dd t}\exp(\eta^{-2}t^2)\Big)
    \PP\big[(\e^{-3/2}|X - \E[X]|)^2 \geq   \eta^2 t^2\big]\dd t \\
    &  \leq \int_0^{\e^{-1}} 2\eta^{-2} t   \exp(\eta^{-2}t^2)  \exp\big(-\frac{\eta}{2C_2} t^2\big) \dd t \le C. 
\end{aligned}
$$
The proof of \eqref{est2.vbl.LSI} is exactly the same as above using \eqref{est.osc.Dvbl}--\eqref{est.Dbl.smallsize}.
\end{proof}

\begin{remark}\label{rmk.optimality}
    We point out that the optimality of the order $O(\e^{3/2})$ of the convergence rates is due to the deterministic $L^2$ estimate near the boundary. Particularly, the estimates \eqref{est.bl.smallsize} and \eqref{est.Dbl.smallsize} cannot be improved in general. However, in view of the scaling of the 2D random boundary, the optimal convergence rates are expected to be $O(\e^2)$. This is actually true for interior estimates, which can be seen from \eqref{est.pointwiseVAR}. In fact, by adding the weight $\delta^\gamma$ with $\gamma>1/2$ and following the same argument, we can show
        \begin{equation*}
        \sup_{k\in \R} \E
        \Big[
        \exp
        \Big(
        \beta \e^{-4}
        \|\delta^\gamma( v^\e_{\rm bl} -\E[v^\e_{\rm bl}] )\|_{L^2(D^0_{k,k+1})}^2
        \Big)
        \Big]
        \le
        C,
    \end{equation*}
    provided \eqref{ineq.LSI} assumption (similar for \eqref{ineq.SG} assumption). The last estimate is optimal in terms of both the $O(\e^2)$ order of convergence rate and the Gaussian stochastic integrability. If $\gamma = 1/2$, the convergence rate $O(\e^2)$ should be replaced by $O(\e^2|\log \e|^{1/2})$ due to a technical reason.
\end{remark}

    \section{Proof of main theorems}\label{sec.prf}

We prove the three main theorems in this section.

    \subsection{Proof of Theorem \ref{thma}}\label{prf.thma}

In this section, we prove Theorem \ref{thma} under deterministic settings. Using the flow $V^\e$ in \eqref{eq.determineBL} and the normalized boundary layer $v^\e_{\rm bl}$ constructed by \eqref{eq.determineBL}--\eqref{def.NBL}, we define
\begin{equation}\label{def.ueapp}
    u^\e_{\rm app}
    = \frac{2\phi}{\pi} V^\e
    = u^0 - \frac{2\phi}{\pi} v^\e_{\rm bl}.
\end{equation}
Here the multiplier $2\phi/\pi$ is introduced so that $\Phi^\e[u^\e_{\rm app}] = \phi$. Also, recall that $u^0$ denotes the Hagen-Poiseuille flow in \eqref{eq.HPflow1}. Hence $u^\e_{\rm app}$ satisfies the Stokes system
\begin{equation}\label{eq.ueapp}
    \left\{
    \begin{array}{ll}
    -\Delta u^\e_{\rm app} + \nabla p^\e_{\rm app} = 0
    &\mbox{in}\ D^\e,\\
    \nabla\cdot u^\e_{\rm app} = 0
    &\mbox{in}\ D^\e,\\
    \Phi^\e[u^\e] = \phi,\\
    u^\e_{\rm app} = 0
    &\mbox{on}\ \Gamma^\e.
    \end{array}\right.
\end{equation}
Moreover, using the size estimates of $v^\e_{\rm bl}$ in Proposition \ref{prop.nbl}, we have (see Remark \ref{rmk.L2size})
\begin{equation}\label{est.vbl.L2}
    \|v^\e_{\rm bl}\|_{L^2_{{\rm uloc}}(D^\e)}
    \lesssim
    \e,
    \qquad
    \|\nabla v^\e_{\rm bl}\|_{L^2_{{\rm uloc}}(D^\e)}
    \lesssim \e^{1/2},
\end{equation}
and thus
\begin{equation}\label{est.ueapp}
    \|u^\e_{\rm app}\|_{L^2_{{\rm uloc}}(D^\e)}
    \lesssim
    |\phi|,
    \qquad
    \|\nabla u^\e_{\rm app}\|_{L^2_{{\rm uloc}}(D^\e)}
    \lesssim
    |\phi|.
\end{equation}

The following proposition shows that $u^\e_{\rm app}$ is rigorously an approximation of $u^\e$.

\begin{proposition}\label{prop.ueapp.err1}
    There exists sufficiently small $\phi_0>0$ such that for all $|\phi|<\phi_0$, the system \eqref{eq.NS} has a solution $u^\e \in H^1_{\rm uloc}(D^\e)^3$ such that
    \begin{align}\label{est.prop.ueapp.err1}
    \begin{split}
        \|\nabla u^\e - \nabla u^\e_{\rm app}\|_{L^2_{{\rm uloc}}(D^\e)}
        &\le
        C |\phi|^2 \e.
    \end{split}
    \end{align}
    The constant $C$ is independent of $\phi,\e$ and $D^\e$.
\end{proposition}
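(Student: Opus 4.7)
The plan is to set $w = u^\e - u^\e_{\rm app}$, show that $w$ satisfies a perturbed Navier-Stokes system around $u^\e_{\rm app}$ of the form \eqref{eq.NS+u0}, and invoke Proposition \ref{prop.NP.existence}. Subtracting \eqref{eq.ueapp} from \eqref{eq.NS} and writing $u^\e\cdot\nabla u^\e = \nabla\cdot(u^\e\otimes u^\e)$ (justified by $\nabla\cdot u^\e=0$), then expanding $u^\e = w + u^\e_{\rm app}$ and using the crucial cancellation $u^0\cdot\nabla u^0 = \nabla\cdot(u^0\otimes u^0) = 0$ (which holds because $u^0$ is parallel to $e_1$ and depends only on $r$), one arrives at
\begin{equation*}
\left\{
\begin{array}{ll}
-\Delta w + w\cdot\nabla w + u^\e_{\rm app}\cdot\nabla w + w\cdot\nabla u^\e_{\rm app} + \nabla\pi = \nabla\cdot F &\mbox{in}\ D^\e,\\
\nabla\cdot w = 0 &\mbox{in}\ D^\e,\\
\Phi^\e[w] = 0,\\
w = 0 &\mbox{on}\ \Gamma^\e,
\end{array}\right.
\end{equation*}
with $\pi = p^\e - p^\e_{\rm app}$ and
\begin{equation*}
F := -\bigl(u^\e_{\rm app}\otimes u^\e_{\rm app} - u^0\otimes u^0\bigr) = \frac{2\phi}{\pi}\bigl(u^0\otimes v^\e_{\rm bl} + v^\e_{\rm bl}\otimes u^0\bigr) - \frac{4\phi^2}{\pi^2}\, v^\e_{\rm bl}\otimes v^\e_{\rm bl}.
\end{equation*}
The flux condition $\Phi^\e[w]=0$ is inherited from $\Phi^\e[u^\e] = \Phi^\e[u^\e_{\rm app}] = \phi$.

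The heart of the argument, and what I expect to be the main obstacle, is the bound
\begin{equation*}
\|F\|_{L^2_{\rm uloc}(D^\e)} \le C|\phi|^2\e,
\end{equation*}
which produces the advertised $O(\e)$ rate for the gradient. Since $\|u^0\|_{L^\infty(D^0)}\lesssim|\phi|$, the first inequality in \eqref{est.vbl.L2} immediately yields $\|u^0\otimes v^\e_{\rm bl}\|_{L^2_{\rm uloc}}\lesssim|\phi|\e$, so the cross terms in $F$ contribute at the desired order. For the quadratic boundary-layer term, I would split $D^\e$ into the interior region $\{r<1-4\e\}$, where $|v^\e_{\rm bl}|\lesssim\e$ pointwise by \eqref{est.int-size}, and the thin boundary strip $\{r>1-4\e\}$ of thickness $O(\e)$, where the large-scale estimate \eqref{est.bdry-size} combined with Sobolev interpolation (using the $H^1_{\rm uloc}$ bound $\|v^\e_{\rm bl}\|_{H^1_{\rm uloc}}\lesssim\e^{1/2}$ from \eqref{est.vbl.L2}) yields $\|v^\e_{\rm bl}\otimes v^\e_{\rm bl}\|_{L^2_{\rm uloc}}\lesssim\e^{5/4}$, which is absorbed into $O(\e)$ for $\e\ll 1$.

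Finally, since $\|u^\e_{\rm app}\|_{H^1_{\rm uloc}(D^\e)}\lesssim|\phi|$ by \eqref{est.ueapp} and the bound on $\|F\|_{L^2_{\rm uloc}(D^\e)}$ above, both smallness conditions in Proposition \ref{prop.NP.existence} are met once $|\phi|<\phi_0$ is sufficiently small. That proposition supplies $w\in H^1_{\rm uloc}(D^\e)^3$ with $\|\nabla w\|_{L^2_{\rm uloc}(D^\e)}\le C\|F\|_{L^2_{\rm uloc}(D^\e)}\le C|\phi|^2\e$, and then $u^\e := u^\e_{\rm app} + w$ is a solution of \eqref{eq.NS} satisfying \eqref{est.prop.ueapp.err1}. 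The conceptual point is recognizing that the Hagen-Poiseuille cancellation $u^0\cdot\nabla u^0=0$ reduces the effective source from $O(|\phi|^2)$ down to $O(|\phi|^2\e)$; the remaining ingredients (the perturbed Navier-Stokes existence theorem and the boundary-layer size estimates) are already available from earlier in the paper.
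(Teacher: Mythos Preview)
Your proposal is correct and follows essentially the same approach as the paper: set $w^\e = u^\e - u^\e_{\rm app}$, exploit the laminar cancellation $u^0\cdot\nabla u^0 = 0$ to derive the perturbed system \eqref{eq.NS+u0} with source $F^\e$ of size $O(|\phi|^2\e)$, and apply Proposition \ref{prop.NP.existence}. The only place where the paper is more precise is the quadratic term $v^\e_{\rm bl}\otimes v^\e_{\rm bl}$: rather than splitting interior/strip, it subtracts $h(r)=\eta(r)(1-r^2)e_1$ so that $v^\e_{\rm bl}-h$ vanishes on $\Gamma^\e$ and the Sobolev--Poincar\'e inequality applies globally in the rough domain, yielding the same $\e^{5/4}$ bound you anticipate.
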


\begin{proof}
We construct a solution as a perturbation of $u^\e_{\rm app}$. Namely, we construct a vector field $w^\e$ in $H^1_{{\rm uloc}}(D^\e)^3$ such that a solution $u^\e$ of \eqref{eq.NS} is obtained from the ansatz
\begin{align}\label{ansatz.prf.prop.ueapp.err1}
    u^\e =  w^\e + u^\e_{\rm app},
    \qquad
    \|\nabla w^\e\|_{L^2_{{\rm uloc}}(D^\e)}
    \lesssim
    |\phi|^2 \e.
\end{align}

The system satisfied by $w^\e$ is as follows. By cancellation of laminar flows $U=U_1(r) e_1$:
\begin{align}\label{canc.flows}
    U \cdot\nabla U = 0
\end{align}
with $U=u^0$, the nonlinear term in \eqref{eq.NS} under the the ansatz \eqref{ansatz.prf.prop.ueapp.err1} can be written as
\begin{align*}
\begin{split}
    u^\e \cdot\nabla u^\e
    &=
    (w^\e + u^\e_{\rm app})
    \cdot\nabla
    (w^\e + u^\e_{\rm app})\\
    &=
    w^\e\cdot\nabla w^\e 
    + w^\e\cdot\nabla u^\e_{\rm app}
    + u^\e_{\rm app}\cdot\nabla w^\e\\
    &\quad
    + (u^\e_{\rm app} - u^0)\cdot\nabla u^0
    + u^0\cdot\nabla (u^\e_{\rm app} - u^0)\\
    &\quad    
    + (u^\e_{\rm app} - u^0)\cdot\nabla (u^\e_{\rm app} - u^0).
\end{split}
\end{align*}
Thus, recalling \eqref{eq.ueapp}, we find that $w^\e$ needs to solve the following nonlinear problem
\begin{equation}\label{eq1.prf.prop.ueapp.err1}
    \left\{
    \begin{array}{ll}
    -\Delta w^\e 
    + w^\e\cdot\nabla w^\e 
    + u^\e_{\rm app}\cdot\nabla w^\e
    + w^\e\cdot\nabla u^\e_{\rm app}
    + \nabla s^\e 
    = \nabla\cdot F^\e &\mbox{in}\ D^\e,\\
    \nabla\cdot w^\e = 0 &\mbox{in}\ D^\e,\\
    \Phi^\e[w^\e] = 0,\\
    w^\e = 0 &\mbox{on}\ \partial D^\e
    \end{array}\right.
\end{equation}
with $F^\e$ defined by
\begin{align*}
\begin{split}
    F^\e
    =
    -(u^\e_{\rm app} - u^0)\otimes u^0
    - u^0\otimes (u^\e_{\rm app} - u^0)
    - (u^\e_{\rm app} - u^0)\otimes (u^\e_{\rm app} - u^0).
\end{split}
\end{align*}
Using the equality
\begin{align}\label{eq.ueapp-u0}
    u^\e_{\rm app} - u^0
    = - \frac{2\phi}{\pi} v^\e_{\rm bl},
\end{align}
we estimate $F^\e$ as
\begin{align}\label{est.Fe.L2}
\begin{split}
    \|F^\e\|_{L^2_{\rm uloc}(D^\e)}
    \lesssim
    |\phi|^2 \|v^\e_{\rm bl}\|_{L^2_{\rm uloc}(D^\e)}
    + |\phi|^2 \|v^\e_{\rm bl}\otimes v^\e_{\rm bl}\|_{L^2_{\rm uloc}(D^\e)}.
\end{split}
\end{align}
One needs to be careful in estimating $v^\e_{\rm bl}\otimes v^\e_{\rm bl}$. Indeed, the classical Gagliardo-Nirenberg inequality cannot apply directly due to roughness of the boundary $\Gamma^\e$ and the nonzero value of $v^\e_{\rm bl}$ on $\Gamma^\e$. Here we take advantage of the special value of $v^\e_{\rm bl}$ on $\Gamma^\e$ to resolve this issue.

Let $\eta$ be given as in Section \ref{BL}, i.e., it is a cut-off function such that $\eta(r) = 1$ for $r>1-\e$, $\eta(r) = 0$ for $r<1-3\e$, and $\eta'(r) \lesssim \e^{-1}$. Let $h(r) = \eta(r) (1-r^2) e_1$. Then it is obvious that $v^\e_{\rm bl} - h$ vanishes on the boundary $\Gamma^\e$ and therefore the Sobolev-Poincar\'{e} inequality applies to $v^\e_{\rm bl} - h$. Precisely, for any $k\in \R$, we have
\begin{equation}\label{est.vov}
    \begin{aligned}
        &\|v^\e_{\rm bl}\otimes v^\e_{\rm bl}\|_{L^2(D^\e_{k,k+1})} \\ 
        & \lesssim \| v^\e_{\rm bl} \|_{L^4(D^\e_{k,k+1})}^2 \\
        & \lesssim \| v^\e_{\rm bl} \|_{L^2(D^\e_{k,k+1})}^{1/2} \| v^\e_{\rm bl} \|_{L^6(D^\e_{k,k+1})}^{3/2} \\
        & \lesssim \| v^\e_{\rm bl} \|_{L^2(D^\e_{k,k+1})}^{1/2} ( \| v^\e_{\rm bl} - h \|_{L^6(D^\e_{k,k+1})} + \| h \|_{L^6(D^\e_{k,k+1}))} )^{3/2} \\
        & \lesssim \| v^\e_{\rm bl} \|_{L^2(D^\e_{k,k+1})}^{1/2} ( \| \nabla(v^\e_{\rm bl} - h) \|_{L^2(D^\e_{k,k+1})} + \| h \|_{L^6(D^\e_{k,k+1})} )^{3/2} \\
        & \lesssim \e^{1/2} ( \e^{1/2} + \e )^{3/2} \lesssim \e^{5/4},
    \end{aligned}
\end{equation}
where we have used \eqref{est.vbl.L2} and the facts that $|h| \lesssim \e$ and $\| \nabla h\|_{L^2(D^\e_{k,k+1})} \lesssim \e^{1/2}$.

As a result of \eqref{est.Fe.L2}--\eqref{est.vov}, we arrive at
\begin{equation}
    \|F^\e\|_{L^2_{\rm uloc}(D^\e)}
    \lesssim
    |\phi|^2 \e.
\end{equation}
Let $\phi_0>0$ be sufficiently small. Then Theorem \ref{prop.NP.existence} implies that for all $|\phi|<\phi_0$ there is a solution of \eqref{eq1.prf.prop.ueapp.err1} with the estimate in \eqref{ansatz.prf.prop.ueapp.err1}. The assertion follows from the ansatz \eqref{ansatz.prf.prop.ueapp.err1}.
\end{proof}

The solutions of \eqref{eq.NS} in Proposition \ref{prop.ueapp.err1} are unique if an additional smallness is assumed on the flux $\phi$. This fact is stated in Theorem \ref{thm.NS.uniqueness}, which is established now as follows.

\begin{proofx}{Theorem \ref{thm.NS.uniqueness}}
The existence of solutions in $H^1_{\rm uloc}(D^\e)^3$ is obtained in Proposition \ref{prop.ueapp.err1}. We thus focus on the uniqueness of solutions. Let $u^\e$ be the solution in Proposition \ref{prop.ueapp.err1}. Suppose that $\tilde{u}^\e$ in $H^1_{\rm uloc}(D^\e)^3$ solves \eqref{eq.NS}. Then the difference $w^\e:=u^\e-\tilde{u}^\e$ satisfies 
\begin{equation}\label{eq1.prf.thm.NS.uniqueness}
    \left\{
    \begin{array}{ll}
    -\Delta w^\e + w^\e\cdot \nabla w^\e + u^\e\cdot \nabla w^\e + w^\e\cdot \nabla u^\e + \nabla \pi^\e = 0 &\mbox{in}\ D^\e,\\
    \nabla\cdot w^\e = 0&\mbox{in}\ D^\e,\\
    \Phi^\e[w^\e] = 0,\\
    w^\e = 0&\mbox{on}\ \Gamma^\e,
\end{array}\right.
\end{equation}
for some pressure $\pi^\e$. Moreover, by the Poincar\'{e} inequality and \eqref{est.ueapp}--\eqref{est.prop.ueapp.err1}, we have
\begin{align}\label{est1.prf.thm.NS.uniqueness}
\begin{split}
    \| u^\e \|_{H^1_{\rm uloc}(D^\e)}
    \lesssim
    \| \nabla u^\e_{\rm app} \|_{L^2_{\rm uloc}(D^\e)} 
    + \| \nabla u^\e - \nabla u^\e_{\rm app} \|_{L^2_{\rm uloc}(D^\e)}
    \lesssim
    |\phi|.
\end{split}
\end{align}

To prove the uniqueness, it suffices to show that, for any $k\in\R$, 
\begin{equation*}
    \int_{D^\e_{k,k+1}} |\nabla w^\e|^2=0.
\end{equation*}
Choose $\phi_0$ smaller if necessary so that $|\phi|\le\phi_0\le\tau_0$ holds with the constant $\tau_0$ in \eqref{cond.thm.SVP.w.u0<tau0} in Theorem \ref{thm.SVP.w}. Then, by \eqref{est1.prf.thm.NS.uniqueness} and a similar argument as in the proof of Corollary \ref{coro.Stokes.uniqueness}, we see that Theorem \ref{thm.SVP.w} and Remark \ref{rem.thm.SVP.w} imply $\int_{D^\e_{k,k+1}} |\nabla w^\e|^2=0$ for any $k\in\R$.
\end{proofx}

Now we prove Theorem \ref{thma}.

\begin{proofx}{Theorem \ref{thma}}
Choose $\phi_0>0$ so that all $\phi$ with $|\phi|<\phi_0$ satisfy the conditions of Theorem \ref{thm.NS.uniqueness} and Proposition \ref{prop.ueapp.err1}. Then, for such $\phi$, there is a unique solution $u^\e$ of \eqref{eq.NS} satisfying \eqref{est.prop.ueapp.err1}. We thus only need to prove the estimates \eqref{0th.err}.

The triangle inequality and Poincar\'{e} inequality combined with \eqref{est.prop.ueapp.err1} give
\begin{align*}
\begin{split}
    \|u^\e - u^0\|_{L^2_{{\rm uloc}}(D^\e)}
    &\lesssim
    \|\nabla u^\e - \nabla u^\e_{\rm app}\|_{L^2_{{\rm uloc}}(D^\e)}
    + \|u^\e_{\rm app} - u^0\|_{L^2_{{\rm uloc}}(D^\e)}\\
    &\lesssim
    |\phi|^2 \e
    + \|u^\e_{\rm app} - u^0\|_{L^2_{{\rm uloc}}(D^\e)}.
\end{split}
\end{align*}
Moreover, we have
\begin{align*}
\begin{split}
    \|\nabla u^\e - \nabla u^0\|_{L^2_{{\rm uloc}}(D^\e)}
    &\lesssim
    |\phi|^2 \e
    + \|\nabla u^\e_{\rm app} - \nabla u^0\|_{L^2_{{\rm uloc}}(D^\e)}.
\end{split}
\end{align*}
Hence the desired estimates \eqref{0th.err} follow from \eqref{eq.ueapp-u0} and \eqref{est.vbl.L2}.
\end{proofx}

    \subsection{Proof of Theorem \ref{thmb}}\label{prf.thmb}

In this section, we prove Theorem \ref{thmb} under probabilistic settings. Recall that $u^{\rm N}$ in Theorem \ref{thmb} is defined in \eqref{def.uN} or \eqref{eq.uN.sol}--\eqref{eq.u1p1}. We first prove Lemma \ref{lema}, as a consequence of

\begin{proposition}\label{prop.ueapp.err2}
    There exists sufficiently small $\phi_0>0$ such that for all $|\phi|<\phi_0$, the unique solution of \eqref{eq.NS} in Theorem \ref{thma} satisfies, for all $k\in\R$,
    \begin{align}\label{est.prop.ueapp.err2}
    \begin{split}
        &\|\nabla u^\e - \nabla u^\e_{\rm app}\|_{L^2(D^\e_{k,k+1})}\\
        &\le
        C |\phi|^2 
        \bigg(
        \e^{3/2}
        + \int_{\R}
        \|v^\e_{\rm bl} - \E[v^\e_{\rm bl}]\|_{L^2(D_{s,s+1})}  e^{-c |s-k|} \dd s
        \bigg).
    \end{split}
    \end{align}
    The constants $c,C$ are independent of $\phi,\e,k$.
\end{proposition}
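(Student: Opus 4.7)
The plan is to sharpen the $L^2_{\rm uloc}$-estimate of Proposition \ref{prop.ueapp.err1} by combining two cancellations in the residual source term with the localized linear Stokes estimate from Corollary \ref{coro.LP.ExistenceUniqueness}. Setting $w^\e = u^\e - u^\e_{\rm app}$, recall that $w^\e$ satisfies the perturbed Navier--Stokes system \eqref{eq1.prf.prop.ueapp.err1} with divergence-form forcing $\nabla\cdot F^\e$, where, after substituting $u^\e_{\rm app}-u^0 = -\frac{2\phi}{\pi}v^\e_{\rm bl}$,
\begin{equation*}
    F^\e = \frac{2\phi}{\pi}\bigl(v^\e_{\rm bl}\otimes u^0 + u^0\otimes v^\e_{\rm bl}\bigr) - \frac{4\phi^2}{\pi^2}\,v^\e_{\rm bl}\otimes v^\e_{\rm bl}.
\end{equation*}
Absorbing the nonlinearity via $w^\e\cdot\nabla w^\e = \nabla\cdot(w^\e\otimes w^\e)$ and invoking Corollary \ref{coro.LP.ExistenceUniqueness} with drift $u^\e_{\rm app}$ (whose $H^1_{\rm uloc}$-norm is controlled by $|\phi|$) reduces the task to estimating $\|F^\e - w^\e\otimes w^\e\|_{L^2(D^\e_{s,s+1})}$ and integrating against the exponential kernel.

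The decisive observation is a laminar cancellation at leading order. Writing $v^\e_{\rm bl} = \E[v^\e_{\rm bl}] + \tilde v^\e_{\rm bl}$ and using the asymptotic structure $\E[v^\e_{\rm bl}] = \e\alpha U^1 + O(\e^2)$ from \eqref{est.Evbl-eavbar}, the leading deterministic piece of the linear term is a scalar multiple of $U^1\otimes u^0 + u^0\otimes U^1 = \frac{4\phi}{\pi}(1-2r^2)(1-r^2)\,e_1\otimes e_1$. Because this tensor depends only on $r$ and has a single nonzero entry in the $(1,1)$-slot, its divergence vanishes identically; since $F^\e$ enters the PDE only through $\nabla\cdot F^\e$, we may subtract it freely. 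The remaining deterministic residual is bounded by $|\phi|^2\|\E[v^\e_{\rm bl}]-\e\alpha U^1\|_{L^\infty}\lesssim|\phi|^2\e^2$, while the fluctuating part $\frac{2\phi}{\pi}(\tilde v^\e_{\rm bl}\otimes u^0 + u^0\otimes\tilde v^\e_{\rm bl})$ yields $|\phi|^2\|\tilde v^\e_{\rm bl}\|_{L^2(D^0_{s,s+1})}$ on the interior region $\{r\le 1-8\e\}$, plus an extra $O(|\phi|^2\e^{3/2})$ from the thin boundary strip of volume $O(\e)$ per unit $x_1$-length, using the deterministic estimate $\|\tilde v^\e_{\rm bl}\|_{L^2(D^\e_{s,s+1}\setminus\{r<1-8\e\})}\lesssim\e^{3/2}$ from \eqref{est.bl.smallsize} together with its analogue on $D^\e\setminus D^0$.

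The main obstacle is the quadratic term $v^\e_{\rm bl}\otimes v^\e_{\rm bl}$: the naive global Sobolev--Nirenberg interpolation used in Proposition \ref{prop.ueapp.err1} produces only $O(\e^{5/4})$, which is larger than the target $\e^{3/2}$ and would fail. I will sharpen this via a multi-scale argument. On the interior $\{r\le 1-4\e\}$, the pointwise bound $|v^\e_{\rm bl}|\lesssim\e$ from Proposition \ref{prop.nbl} immediately yields an $L^4$ contribution of $O(\e^2)$. On the boundary strip, one covers by $O(\e^{-2})$ balls $B_{\e,+}(x)$ per unit length and applies the Sobolev inequality in 3D locally to $v^\e_{\rm bl}-h$ (where $h$ is the cutoff extension from Section \ref{BL}), using $\int_{B_{\e,+}}|v^\e_{\rm bl}|^2\lesssim\e^5$ and $\int_{B_{\e,+}}|\nabla v^\e_{\rm bl}|^2\lesssim\e^3$ inherited from \eqref{est.bdry-size}. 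Combined with the interpolation $\|f\|_{L^4}^4\le\|f\|_{L^2}\|f\|_{L^6}^3$, this gives $\int_{B_{\e,+}}|v^\e_{\rm bl}|^4\lesssim\e^7$ per ball, which sums to $\|v^\e_{\rm bl}\otimes v^\e_{\rm bl}\|_{L^2_{\rm uloc}}\lesssim\e^2$, as needed.

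Finally, the absorbed nonlinearity $w^\e\otimes w^\e$ is handled using the a priori bound $\|\nabla w^\e\|_{L^2_{\rm uloc}}\lesssim|\phi|^2\e$ from Proposition \ref{prop.ueapp.err1}: since $w^\e$ vanishes on $\Gamma^\e$, Sobolev--Poincar\'e on the John-domain slab yields $\|w^\e\otimes w^\e\|_{L^2(D^\e_{s,s+1})}\lesssim\|\nabla w^\e\|_{L^2(D^\e_{s-1,s+2})}^2\lesssim|\phi|^4\e^2$, which is absorbed into $|\phi|^2\e^{3/2}$ for small $\phi,\e$. Collecting all contributions inside the localized Stokes estimate produces \eqref{est.prop.ueapp.err2}. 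The only genuinely delicate step is the multi-scale bound on $v^\e_{\rm bl}\otimes v^\e_{\rm bl}$, which essentially relies on the large-scale regularity from Section \ref{LSLipschitz} and cannot be replaced by any global interpolation.
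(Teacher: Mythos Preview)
Your argument is correct, and the overall architecture---laminar cancellation in the source plus the localized linear estimate of Corollary~\ref{coro.LP.ExistenceUniqueness}---matches the paper. The organization, however, differs in two substantive ways.

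First, for the cancellation you keep $u^0$ as the reference flow and then subtract by hand the divergence-free piece $\E[v^\e_{\rm bl}]\otimes u^0 + u^0\otimes\E[v^\e_{\rm bl}]$ (which you identify via the leading term $\e\alpha U^1$). The paper instead centers directly at $\E[u^\e_{\rm app}] = u^0 - \tfrac{2\phi}{\pi}\E[v^\e_{\rm bl}]$, which is itself an exact laminar flow (see~\eqref{eq.Evbl}), so the identity $\E[u^\e_{\rm app}]\cdot\nabla\E[u^\e_{\rm app}]=0$ kills \emph{all} deterministic cross terms at once with no $O(\e^2)$ residual to track. Second, and more interestingly, your quadratic term is the full $v^\e_{\rm bl}\otimes v^\e_{\rm bl}$, which forces you into the multi-scale $L^4$ argument (local Gagliardo--Nirenberg on $\e$-balls using the size estimates of Proposition~\ref{prop.nbl}) to beat the global $\e^{5/4}$ bound. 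The paper's centering produces instead the fluctuation quadratic $(v^\e_{\rm bl}-\E[v^\e_{\rm bl}])^{\otimes 2}$, and this is handled by the same global interpolation trick as in~\eqref{est.vov}, followed by Young's inequality to get $\e^{3/2}+\|v^\e_{\rm bl}-\E[v^\e_{\rm bl}]\|_{L^2}$---no local argument needed. Your route is sharper on the quadratic term (you extract $\e^2$ rather than $\e^{3/2}$) and makes heavier use of the large-scale regularity; the paper's route is shorter and uses only the global $L^2_{\rm uloc}$ size bounds. A small caveat: in your local $L^6$ step, $v^\e_{\rm bl}-h$ vanishes only on $\Gamma^\e\cap B_\e(x)$, not on all of $\partial B_{\e,+}(x)$, so you should either invoke the scaled Sobolev inequality $\|f\|_{L^6(B_{\e,+})}\lesssim\|\nabla f\|_{L^2}+\e^{-1}\|f\|_{L^2}$ on the John domain directly, or restrict to balls that genuinely meet $\Gamma^\e$ and treat the remaining shell via the interior pointwise bound; either fix preserves your $\e^7$-per-ball estimate.
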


\begin{proof}
Take the unique solution $u^\e$ in Theorem \ref{thma}. Then $u^\e$ is given as $u^\e=w^\e + u^\e_{\rm app}$ by the proof of Theorem \ref{thma}, where $w^\e$ is a solution of \eqref{eq1.prf.prop.ueapp.err1} in the proof of Proposition \ref{prop.ueapp.err1}. Hence we estimate $\nabla w^\e$ by using the system satisfied by $w^\e$ which is different from \eqref{eq1.prf.prop.ueapp.err1}.

Set
\begin{align*}
\begin{split}
    \E[u^\e_{\rm app}]
    =
    u^0 - \frac{2\phi}{\pi} \E[v^\e_{\rm bl}].
\end{split}
\end{align*}
Here $\E[u^\e_{\rm app}]$ is understood as the expectation of $u^\e_{\rm app}$ restricted to $D^0$ and is then extended to $\R^3$. Recall that $\E[v^\e_{\rm bl}]$ is defined on $\R^3$ according to the convention in Section \ref{NWL}. Using the cancellation \eqref{canc.flows} for $\E[u^\e_{\rm app}]$, we rewrite the nonlinear term in \eqref{eq.NS} as
\begin{align*}
\begin{split}
    u^\e \cdot\nabla u^\e
    &=
    w^\e\cdot\nabla w^\e 
    + w^\e\cdot\nabla u^\e_{\rm app}
    + u^\e_{\rm app}\cdot\nabla w^\e\\
    &\quad
    + (u^\e_{\rm app} - \E[u^\e_{\rm app}])\cdot\nabla \E[u^\e_{\rm app}]
    + \E[u^\e_{\rm app}]\cdot\nabla (u^\e_{\rm app} - \E[u^\e_{\rm app}])\\
    &\quad
    + (u^\e_{\rm app} - \E[u^\e_{\rm app}])\cdot\nabla (u^\e_{\rm app} - \E[u^\e_{\rm app}]).
\end{split}
\end{align*}
Thus $w^\e$ can be regarded as a solution of the linearized problem
\begin{equation}\label{eq1.prf.prop.ueapp.err2}
    \left\{
    \begin{array}{ll}
    -\Delta w^\e 
    + u^\e_{\rm app}\cdot\nabla w^\e
    + w^\e\cdot\nabla u^\e_{\rm app}
    + \nabla s^\e 
    = \nabla\cdot G^\e &\mbox{in}\ D^\e,\\
    \nabla\cdot w^\e = 0 &\mbox{in}\ D^\e,\\
    \Phi^\e[w^\e] = 0,\\
    w^\e = 0 &\mbox{on}\ \partial D^\e
    \end{array}\right.
\end{equation}
with $G^\e$ defined by
\begin{align*}
\begin{split}
    G^\e
    &=
    -w^\e\otimes w^\e
    - (u^\e_{\rm app} - \E[u^\e_{\rm app}])\otimes \E[u^\e_{\rm app}]
    - \E[u^\e_{\rm app}]\otimes (u^\e_{\rm app} - \E[u^\e_{\rm app}])\\
    &\quad
    - (u^\e_{\rm app} - \E[u^\e_{\rm app}])\otimes (u^\e_{\rm app} - \E[u^\e_{\rm app}]).
\end{split}
\end{align*}

Let us estimate $G^\e$. Fix $k\in\R$ and recall that the estimate of $\E[v^\e_{\rm bl}]$ in $D^\e$ is given in \eqref{est.Evbl}. Since $w^\e$ vanishes on $\Gamma^\e$, by the Sobolev-Poincar\'{e} inequality, we have
\begin{align*}
\begin{split}
    \|w^\e\otimes w^\e\|_{L^2(D^\e_{k,k+1})}
    \lesssim \| \nabla w^\e \|_{L^2(D^\e_{k,k+1})}^2 \lesssim
    |\phi|^4 \e^2,
\end{split}
\end{align*}
where the estimate in \eqref{ansatz.prf.prop.ueapp.err1} is applied. Also, a similar argument as in \eqref{est.vov} and \eqref{est.Evbl} give
\begin{align*}
\begin{split}
    &\|(v^\e_{\rm bl}-\E[v^\e_{\rm bl}])\otimes (v^\e_{\rm bl}-\E[v^\e_{\rm bl}])\|_{L^2(D^\e_{k,k+1})}\\
    &\lesssim
    \|v^\e_{\rm bl}-\E[v^\e_{\rm bl}\|_{L^2(D^\e_{k,k+1})}^{1/2} \|v^\e_{\rm bl}-\E[v^\e_{\rm bl}\|_{L^6(D^\e_{k,k+1})}^{3/2} \\
    & \lesssim \|v^\e_{\rm bl}-\E[v^\e_{\rm bl}\|_{L^2(D^\e_{k,k+1})}^{1/2} \\
    &\quad\times ( \|v^\e_{\rm bl}-h\|_{L^6(D^\e_{k,k+1})} + \| h \|_{L^6(D^\e_{k,k+1})} + \|\E[v^\e_{\rm bl}]\|_{L^6(D^\e_{k,k+1})} )^{3/2} \\
    & \lesssim \|v^\e_{\rm bl}-\E[v^\e_{\rm bl}\|_{L^2(D^\e_{k,k+1})}^{1/2} \\
    &\quad\times ( \|\nabla(v^\e_{\rm bl}-h)\|_{L^2(D^\e_{k,k+1})} + \| h \|_{L^6(D^\e_{k,k+1})} + \|\E[v^\e_{\rm bl}]\|_{L^6(D^\e_{k,k+1})} )^{3/2} \\
    & \lesssim 
    \e^{3/4} \|v^\e_{\rm bl}-\E[v^\e_{\rm bl}\|_{L^2(D^\e_{k,k+1})}^{1/2}\\
    & \lesssim \e^{3/2} + \|v^\e_{\rm bl}-\E[v^\e_{\rm bl}\|_{L^2(D^\e_{k,k+1})},
\end{split}
\end{align*}
where the Young's inequality is applied in the last line. Thus, using the equality
\begin{align}\label{eq.ueapp-ueN}
    u^\e_{\rm app} - \E[u^\e_{\rm app}]
    = - \frac{2\phi}{\pi} (v^\e_{\rm bl} - \E[v^\e_{\rm bl}]),
\end{align}
we estimate $G^\e$ as, for any $k\in\R$, 
\begin{align}\label{est1.prf.prop.ueapp.err2}
\begin{split}
    \|G^\e\|_{L^2(D^\e_{k,k+1})}
    &\lesssim
    |\phi|^2
    (
    \e^{3/2}
    + \|v^\e_{\rm bl}-\E[v^\e_{\rm bl}]\|_{L^2(D^\e_{k,k+1})}
    )\\
    &\lesssim
    |\phi|^2
    (
    \e^{3/2}
    + \|v^\e_{\rm bl}-\E[v^\e_{\rm bl}]\|_{L^2(D_{k,k+1})}
    ).
\end{split}
\end{align}

Now choose $\phi_0>0$ in Theorem \ref{thma} even smaller if necessary so that $u^\e_{\rm app}$, which is estimated in \eqref{est.ueapp}, satisfies the condition in Corollary \ref{coro.LP.ExistenceUniqueness} for all $\phi$ with $|\phi|<\phi_0$. Then, by the corollary, the solution $w^\e$ of \eqref{eq1.prf.prop.ueapp.err2} can be estimated as, for any $k\in\R$, 
\begin{align}\label{est2.prf.prop.ueapp.err2}
\begin{split}
    \|\nabla w^\e\|_{L^2(D^\e_{k,k+1})} 
    \lesssim 
    \int_\R \|G^\e\|_{L^2(D^\e_{s,s+1})} e^{-c |s-k|} \dd s,
\end{split}
\end{align}
which combined with \eqref{est1.prf.prop.ueapp.err2} leads to the desired estimate \eqref{est.prop.ueapp.err2}.
\end{proof}

\begin{proofx}{Lemma \ref{lema}}
Assume if necessary that $\phi_0$ in Theorem \ref{thma} is even smaller so that all $\phi$ with $|\phi|<\phi_0$ satisfy the condition of Proposition \ref{prop.ueapp.err2}. Notice that, in the following proof, smallness on $\e$ can be assumed since the statement of Lemma \ref{lema} is trivial if $\e$ is not small.

Fix $k\in\R$. The triangle inequality and the Poincar\'{e} inequality give
\begin{align}\label{est1.prf.lema}
\begin{split}
    &\|u^\e - u^{\rm N}\|_{L^2(D^0_{k,k+1})}\\
    &\lesssim
    \|\nabla u^\e - \nabla u^\e_{\rm app}\|_{L^2(D^\e_{k,k+1})}
    + \|u^\e_{\rm app} - u^{\rm N}\|_{L^2(D^0_{k,k+1})}.
\end{split}
\end{align}
By the definition of $u^{\rm N}$ in \eqref{def.uN} and the estimate \eqref{est.Evbl-eavbar}, we have
\begin{align}\label{est2.prf.lema}
\begin{split}
    & \|u^\e_{\rm app} - u^{\rm N}\|_{L^2(D^0_{k,k+1})} \\
    & \lesssim \|u^\e_{\rm app} - \E[u^\e_{\rm app}]\|_{L^2(D^0_{k,k+1})} + \|\E[u^\e_{\rm app}] - u^{\rm N}\|_{L^2(D^0_{k,k+1})}
    \\
    & \lesssim
    |\phi|(\|v^\e_{\rm bl} - \E[v^\e_{\rm bl}]\|_{L^2(D^0_{k,k+1})}
    + \e^2).
\end{split}
\end{align}
Hence the estimate \eqref{1st.err} follows from \eqref{est1.prf.lema}--\eqref{est2.prf.lema} and Proposition \ref{prop.ueapp.err2}. The proof of the derivative estimate \eqref{1st.err-2} proceeds in a similar manner. This completes the proof.
\end{proofx}

Now we prove Theorem \ref{thmb} by combining Lemma \ref{lema} and Proposition \ref{prop.flu.vbl}.

\begin{proofx}{Theorem \ref{thmb}}
Assume if necessary that $\phi_0$ in Theorem \ref{thma} is even smaller so that all $\phi$ with $|\phi|<\phi_0$ satisfy the condition of Lemma \ref{lema} and that $|\phi|>0$ otherwise the statement is trivial. We only prove \eqref{est.ue-uN.SG} as the proofs of the other estimates are analogous.

Fix $k\in\R$. Lemma \ref{lema} provides that
\begin{align}\label{est1.prf.thmb}
\begin{split}
    &C_0^{-1} |\phi|^{-1} \e^{-3/2} \|u^\e - u^{\rm N}\|_{L^2(D^0_{k,k+1})} \\
    &\le
    1 + \e^{-3/2} \|v^\e_{\rm bl} - \E[v^\e_{\rm bl}] \|_{L^2(D^0_{k,k+1})}\\
    &\quad
    + \int_{\R}
    \e^{-3/2} \|v^\e_{\rm bl} - \E[v^\e_{\rm bl}] \|_{L^2(D^0_{s,s+1})}  e^{-c |s-k|} \dd s.
\end{split}
\end{align}
The Cauchy-Schwarz and Jensen's inequalities applied to \eqref{est1.prf.thmb} give, for any $\beta>0$,
\begin{align}\label{est2.prf.thmb}
\begin{split}
    &\exp
    \Big(
    \frac12 \beta C_0^{-1} |\phi|^{-1} \e^{-3/2} \|u^\e - u^{\rm N}\|_{L^2(D^0_{k,k+1})}
    \Big)\\
    &\lesssim
    \exp\Big(
    \beta \e^{-3/2} \|v^\e_{\rm bl} - \E[v^\e_{\rm bl}] \|_{L^2(D^0_{k,k+1})}
    \Big)\\
    &\quad
    + \int_{\R} 
    \exp\Big(
    \beta \e^{-3/2} \|v^\e_{\rm bl} - \E[v^\e_{\rm bl}] \|_{L^2(D^0_{s,s+1})}
    \Big)
    e^{-c |s-k|} \dd s.
\end{split}
\end{align}

Now assume \eqref{ineq.SG} and let $\beta_1$ be defined in Proposition \ref{prop.flu.vbl} (\ref{item1.prop.flu.vbl}). Then we complete the proof of \eqref{est.ue-uN.SG} with $c = (1/2) \beta_1 C_0^{-1}$ by using \eqref{est2.prf.thmb} with $\beta=\beta_1$ and \eqref{est.vbl.SG}.
\end{proofx}

\begin{remark}
    Combined with the deterministic estimate of the boundary layers and Theorem \ref{thma}, we can actually state Theorem \ref{thmb} (and Theorem \ref{thmc}) in a more precise way. For example, under \eqref{ineq.SG}, we have for some $C>0$,
    \begin{equation*}
        \PP \bigg( \bigg\{ \frac{c \|u^\e - u^{{\rm N}} \|_{L^2(D^0_{k,k+1})}}{|\phi| \e^{3/2}} \ge t \bigg\} \bigg) \lesssim \left\{  
        \begin{aligned}
            & e^{- t}, & \quad \mbox{for } t<\e^{-1/2},\\
            & 0, & \quad \mbox{for } t\ge \e^{-1/2}.
        \end{aligned}
        \right.
    \end{equation*}
    Thus the exponential moment bound kicks in only for $1< t < \e^{-1/2}$.
    This type of piecewise estimate has been applied in the proof Proposition \ref{prop.flu.vbl} which plays a crucial role in deriving the Gaussian stochastic integrability under \eqref{ineq.LSI} (an oscillation version of log-Sobolev inequality).
\end{remark}

    \subsection{Proof of Theorem \ref{thmc}}\label{prf.thmc}

In this section, we prove Theorem \ref{thmc}.

\begin{proofx}{Theorem \ref{thmc}}
Let $w^\e = u^\e - u^{\rm N}$ and $\pi^\e = p^\e - p^{\rm N}$. Then $(w^\e, \pi^\e)$ satisfies
\begin{equation}
\left\{
\begin{array}{ll}
-\Delta w^\e + w^\e \cdot \nabla w^\e + u^{\rm N}\cdot \nabla w^\e + w^\e \cdot \nabla u^{\rm N} + \nabla \pi^\e = 0 &\mbox{in}\ D^0,\\
\nabla\cdot w^\e = 0&\mbox{in}\ D^0.
\end{array}\right.
\end{equation}

Let $te_1 = (t,0,0)$. Then by the oscillation estimate of pressure in Lemma \ref{lem.Pressure.osc}, we have
\begin{equation}
    \mathcal{O}_{B_{1/4}(te_1)}[\pi^\e] \le C(\| u^{\rm N} \|_{C^1(B_{1/2}(te_1))}, \| w^\e \|_{H^1(B_{1/2}(te_1))} ) \| w^\e \|_{H^1(B_{1/2}(te_1))}.
\end{equation}
By the explicit expression \eqref{eq.uN.sol}--\eqref{eq.u1p1} and the $O(1)$-bound of $\phi,\e,\alpha$ combined with Theorem \ref{thma}, 
we have $\| u^{\rm N} \|_{C^1(B_{1/2}(te_1))} \lesssim 1$ and $\| w^\e \|_{H^1(B_{1/2}(te_1))} \lesssim 1$. Thus we see that
\begin{equation}
    \mathcal{O}_{B_{1/4}(te_1)}[\pi^\e] \le C\| w^\e \|_{H^1(B_{1/2}(te_1))},
\end{equation}
for some constant $C>0$. It follows that for any $\ell \in (0,1]$,
\begin{equation}
\begin{aligned}
    |\Delta_\ell \pi^\e(te_1)| & \le \mathcal{O}_{B_{1/4}(te_1)}[\pi^\e] + \mathcal{O}_{B_{1/4}((t+1/2) e_1)}[\pi^\e] + \mathcal{O}_{B_{1/4}((t+1)e_1)}[\pi^\e] \\
    & \le C \| w^\e \|_{H^1(D^0_{1/2} \cap \{ t-1/2<x_1<t+3/2 \})} \\
    & \le C \| w^\e \|_{L^2(D^0_{t-1/2,t+3/2})} + C\| \delta \nabla w^\e \|_{L^2(D^0_{t-1/2,t+3/2})}.
\end{aligned}
\end{equation}

Now assume \eqref{ineq.SG}. Then we apply Theorem \ref{thmb} (\ref{item1.thmb}) to obtain, for $\beta>0$,
\begin{equation}
\begin{aligned}
    & \E\Big[ \exp \Big( \frac{\beta |\Delta_\ell \pi^\e(te_1)| }{ |\phi| \e^{3/2}}   \Big) \Big] \\
    & \le \E\Big[ \exp \Big( \frac{\beta C \| w^\e \|_{L^2(D^0_{t-1/2,t+3/2})} }{ |\phi| \e^{3/2}}   \Big) \exp \Big( \frac{\beta C \| \delta \nabla w^\e \|_{L^2(D^0_{t-1/2,t+3/2})} }{ |\phi| \e^{3/2}}   \Big) \Big] \\
    & \le \frac12 \E\Big[ \exp \Big( \frac{2\beta C \| w^\e \|_{L^2(D^0_{t-1/2,t+3/2})} }{ |\phi| \e^{3/2}}   \Big) + \exp \Big( \frac{2\beta C \| \delta \nabla w^\e \|_{L^2(D^0_{t-1/2,t+3/2})} }{ |\phi| \e^{3/2}}   \Big) \Big] \\
    & \lesssim 1,
\end{aligned}
\end{equation}
provided that $2\beta C<c$, which is possible if $\beta$ is chosen to be small. Finally, observe that
\begin{equation}
    \Delta_\ell \pi^\e(te_1) = \Delta_\ell p^\e(te_1) - \Delta_\ell p^{\rm N}(te_1) = \Delta_t p^\e - \frac{8\phi \ell}{\pi}(1 - 2\e \alpha).
\end{equation}
This ends the proof for the case of \eqref{ineq.SG}. The proof for the case of \eqref{ineq.LSI} is similar.
\end{proofx}

\section*{Acknowledgments} Part of this work was done when MH was visiting Morningside Center of Mathematics, Chinese Academy of Sciences. MH is grateful to the center for the kind hospitality during his stay. YL acknowledges the support from NSF award DMS-2343135. JZ is partially supported by NSFC No. 12288201 and a grant for Excellent Youth from NSFC.

\appendix

    \section{Pressure estimates}\label{sec.pressure}

    This appendix collects the estimates of pressure for the Stokes and Navier-Stokes systems.
    We begin with the Bogovski's lemma crucial for the study of the Navier-Stokes system.
    \begin{lemma}[Bogovski] \label{lem.Bogovski}
        Let $\omega \subset \R^d$ be a bounded John domain and $f \in L^2(\omega)$. Then there exists a vector-valued function $u \in H^1_0(\omega)^d$ such that $\nabla\cdot u = f$ in $\omega$ and
        \begin{equation}
            \| \nabla u \|_{L^2(\omega)} \le C\| f\|_{L^2(\omega)},
        \end{equation}
        where $C$ depends only on $d$ and the constant of John domain.
    \end{lemma}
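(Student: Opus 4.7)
The plan is to follow the approach of Acosta-Dur\'an-Muschietti \cite{ADM06}, adapting the classical Bogovski construction to the John domain setting. Since $u \in H^1_0(\omega)^d$ forces the compatibility condition $\int_\omega f = 0$ via integration by parts, I would first reduce to that case by subtracting a correction of the form $(|\omega|^{-1}\int_\omega f)\chi_{B}$ for a fixed inner ball $B \subset \omega$, whose $L^2$ norm is controlled by $\|f\|_{L^2(\omega)}$.

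The first main step is to fix a Whitney-type decomposition $\omega = \bigcup_k Q_k$ into open cubes of controlled overlap with $\operatorname{diam}(Q_k) \approx \mathrm{dist}(Q_k, \partial \omega)$, together with a subordinate smooth partition of unity $\{\psi_k\}$. Using the John condition, I would associate to each Whitney cube $Q_k$ a chain $Q_k = Q_k^{0}, Q_k^{1}, \ldots, Q_k^{N_k} = Q_*$ connecting it to a fixed central cube $Q_*$ near the John center $\tilde{x}$, with consecutive cubes substantially overlapping and with chain length controlled by $N_k \lesssim \log(\operatorname{diam}(\omega)/\operatorname{diam}(Q_k))$.

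The second step is the mean-transport decomposition. I would rewrite $f = \sum_k \psi_k f$ and, for each $k$, iteratively move the (generally nonzero) mean of $\psi_k f$ along its chain to produce a refined decomposition $f = \sum_{k,j} h_{k,j}$ in which every $h_{k,j}$ has zero mean and is supported in a single cube $Q_k^j$. On each cube the classical Bogovski formula yields a solution $u_{k,j} \in H^1_0(Q_k^j)^d$ of $\nabla \cdot u_{k,j} = h_{k,j}$ satisfying $\|\nabla u_{k,j}\|_{L^2} \lesssim \|h_{k,j}\|_{L^2}$. Setting $u := \sum_{k,j} u_{k,j}$ defines an element of $H^1_0(\omega)^d$ with $\nabla \cdot u = f$.

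The main obstacle is the global estimate $\|\nabla u\|_{L^2(\omega)} \lesssim \|f\|_{L^2(\omega)}$, since summing the local bounds naively over $(k,j)$ diverges: a single cube typically appears in many chains. This is precisely where the John hypothesis is indispensable, providing a quantitative bound on the multiplicity of cubes in the chains through the chain length estimate above. The estimate then reduces to the $L^2$ boundedness of a Hardy-type discrete operator on the Whitney tree, for which I would invoke the corresponding argument in \cite{ADM06} directly. The resulting constant depends only on $d$ and the John constant $L$ of $\omega$, as claimed.
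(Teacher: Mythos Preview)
The paper does not give its own proof of this lemma; it simply cites \cite{ADM06}. Your proposal sketches exactly the Acosta--Dur\'an--Muschietti argument from that reference (Whitney decomposition, John chains to a central cube, mean-transport to produce locally mean-zero pieces, local Bogovski on cubes, and the Hardy-type summation), so you are in complete agreement with the paper's approach.

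One minor correction: your ``reduction'' to the mean-zero case is not well posed. The condition $\int_\omega f=0$ is a \emph{necessary} hypothesis for the existence of any $u\in H^1_0(\omega)^d$ with $\nabla\cdot u=f$, so one cannot reduce to it by subtracting a bump and then solving---solving $\nabla\cdot u = f - c\chi_B$ does not give a solution for the original $f$. The lemma as stated tacitly assumes the compatibility condition (and its only application in the paper, to \eqref{eq.div-v}, satisfies it since $(v-\tilde v)\cdot\nabla\zeta = \nabla\cdot((v-\tilde v)\zeta)$ integrates to zero).
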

    The Bogovski's lemma in John domains was proved in \cite{ADM06} and it was shown that the John domain is a  sufficient and almost necessary condition for the $L^p$ solvability of the divergence equation. As a duality statement, we have the following lemma.

    \begin{lemma}\label{lem.pressure}
        Let $\omega \subset \R^d$ be a bounded John domain. Then, for $p\in L^2(\omega)$, we have
        \begin{equation}
            c\| \nabla p \|_{H^{-1}(\omega)} \le \| p - \dashint_{\omega} p \|_{L^2(\omega)} \le C\| \nabla p \|_{H^{-1}(\omega)},
        \end{equation}
        where $H^{-1}(\omega)$ is the dual space of $H^1_0(\omega)$.
    \end{lemma}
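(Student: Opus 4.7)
The plan is to prove the two inequalities separately, treating them as dual statements. The lower bound is elementary and holds in any bounded open set; only the upper bound genuinely uses the John domain structure, through Lemma \ref{lem.Bogovski}.

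For the lower bound $c\|\nabla p\|_{H^{-1}(\omega)} \le \|p-\dashint_\omega p\|_{L^2(\omega)}$, I would test $\nabla p$ against an arbitrary $\varphi\in H^1_0(\omega)^d$ with $\|\varphi\|_{H^1_0(\omega)}\le 1$. Integration by parts gives
\begin{equation*}
\langle \nabla p,\varphi\rangle = -\int_\omega p\,\nabla\cdot\varphi = -\int_\omega \Big(p-\dashint_\omega p\Big)\nabla\cdot\varphi,
\end{equation*}
where the subtraction of the mean is legitimate because $\int_\omega \nabla\cdot\varphi=0$ for $\varphi\in H^1_0(\omega)^d$. By Cauchy--Schwarz,
\begin{equation*}
|\langle\nabla p,\varphi\rangle|\le \Big\|p-\dashint_\omega p\Big\|_{L^2(\omega)}\|\nabla\cdot\varphi\|_{L^2(\omega)}\le \Big\|p-\dashint_\omega p\Big\|_{L^2(\omega)},
\end{equation*}
and taking the supremum over such $\varphi$ yields the claim.

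For the upper bound $\|p-\dashint_\omega p\|_{L^2(\omega)}\le C\|\nabla p\|_{H^{-1}(\omega)}$, I would use duality in $L^2$ together with the Bogovski operator. Since $p-\dashint_\omega p$ has zero mean,
\begin{equation*}
\Big\|p-\dashint_\omega p\Big\|_{L^2(\omega)} = \sup\bigg\{\int_\omega \Big(p-\dashint_\omega p\Big)f~\bigg|~ f\in L^2(\omega),\ \int_\omega f=0,\ \|f\|_{L^2(\omega)}\le 1\bigg\}.
\end{equation*}
Given such an $f$, Lemma \ref{lem.Bogovski} (applied to the John domain $\omega$) supplies $u\in H^1_0(\omega)^d$ with $\nabla\cdot u=f$ and $\|\nabla u\|_{L^2(\omega)}\le C\|f\|_{L^2(\omega)}\le C$. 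Then
\begin{equation*}
\int_\omega\Big(p-\dashint_\omega p\Big)f = \int_\omega\Big(p-\dashint_\omega p\Big)\nabla\cdot u = -\langle\nabla p,u\rangle\le \|\nabla p\|_{H^{-1}(\omega)}\|u\|_{H^1_0(\omega)}\le C\|\nabla p\|_{H^{-1}(\omega)},
\end{equation*}
where in the middle equality I use that $\nabla(p-\dashint_\omega p)=\nabla p$ distributionally.

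The only genuine ingredient beyond standard calculus is Lemma \ref{lem.Bogovski}, so there is no real obstacle; the main point is simply to recognize that the right-inverse of the divergence on mean-zero $L^2$ functions—available precisely because $\omega$ is John—converts the $L^2$ norm of $p-\dashint_\omega p$ into a duality pairing against $\nabla p$ in $H^{-1}$. The constants $c,C$ depend only on $d$ and the John constant of $\omega$, inherited from Lemma \ref{lem.Bogovski}.
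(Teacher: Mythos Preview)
Your proof is correct and is exactly the standard duality argument the paper points to: the paper does not write out a proof of this lemma but simply introduces it with ``As a duality statement, we have the following lemma'' immediately after Lemma~\ref{lem.Bogovski}. Your argument spells out precisely that duality, using Bogovski's operator on mean-zero test functions for the upper bound and integration by parts for the lower bound, so there is nothing to add.
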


    The above lemma can be used to estimate the pressure for Stokes or Navier-Stokes systems in bounded John domains.

    \begin{lemma}\label{lem.p2uF}
        Let $(u,p)$ be a weak solution of $-\Delta u + \nabla p = \nabla\cdot F$ in a bounded John domain $\omega$. Then we have
    \end{lemma}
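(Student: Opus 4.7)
The plan is to deduce this from the duality characterization of the pressure provided by Lemma \ref{lem.pressure}, using the equation satisfied by $(u,p)$ to test against arbitrary $\varphi \in H_0^1(\omega)^d$. The target estimate should read
\begin{equation*}
    \Big\| p - \dashint_\omega p \Big\|_{L^2(\omega)} \le C\big( \|\nabla u\|_{L^2(\omega)} + \|F\|_{L^2(\omega)}\big),
\end{equation*}
which is exactly what is invoked in the proofs of Lemma \ref{lem.exp2bdd} and Lemma \ref{lem.Qubic2bdd} with $F = -(u_0\otimes u + u\otimes u_0)$ or $F = -(w\otimes w + u_0\otimes w + w\otimes u_0)$.

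First I would apply the upper half of Lemma \ref{lem.pressure} to reduce the problem to controlling $\|\nabla p\|_{H^{-1}(\omega)}$. Then for any test field $\varphi \in H_0^1(\omega)^d$ I would use the weak formulation of $-\Delta u + \nabla p = \nabla\cdot F$, namely
\begin{equation*}
    \langle \nabla p, \varphi\rangle = -\int_\omega \nabla u : \nabla \varphi \,\dd x - \int_\omega F : \nabla \varphi \,\dd x,
\end{equation*}
and estimate the right-hand side by the Cauchy--Schwarz inequality. Taking the supremum over $\varphi$ with $\|\nabla \varphi\|_{L^2(\omega)}\le 1$ gives the bound $\|\nabla p\|_{H^{-1}(\omega)} \le \|\nabla u\|_{L^2(\omega)} + \|F\|_{L^2(\omega)}$, and combining with Lemma \ref{lem.pressure} finishes the proof.

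There is no substantial obstacle here: the only subtle point is that the duality in Lemma \ref{lem.pressure} requires $\omega$ to be a John domain, which is where Bogovski's lemma (Lemma \ref{lem.Bogovski}) is implicitly used through its dual formulation. This is precisely why the statement is restricted to bounded John domains; the rest is a one-line weak-formulation argument.
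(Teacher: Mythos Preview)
Your proposal is correct and matches the paper's intended approach. In fact the paper gives no explicit proof of Lemma \ref{lem.p2uF} at all, presenting it as an immediate consequence of Lemma \ref{lem.pressure}; your weak-formulation argument is exactly the standard way to fill in that step, and there is nothing more to it.
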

    \begin{equation}
        \| p - \dashint_{\omega} p \|_{L^2(\omega)} \le C( \| \nabla u \|_{L^2(\omega)} + \| F \|_{L^2(\omega)}).
    \end{equation}

Next we discuss the interior regularity of the Navier-Stokes systems. Let $\omega$ be a bounded domain and $\omega' \subset\subset \omega$. If $(u,p)$ is a weak solution of the nonlinear system
\begin{equation}\label{eq.weakSol.omega}
\left\{
\begin{array}{ll}
-\Delta u + u\cdot \nabla u + u_0\cdot \nabla u + u\cdot \nabla u_0 + \nabla p = 0 &\mbox{in}\ \omega,\\
\nabla\cdot u = 0&\mbox{in}\ \omega,
\end{array}\right.
\end{equation}
with $u_0 \in C^\infty(\omega)^3$, then the classical regularity theory of the stationary Navier-Stokes system implies that $u\in C^\infty(\omega')^3$. Moreover, for any $k \ge 1$,
\begin{equation}
    \| u \|_{C^k(\omega')} + \| p - \dashint_{\omega} p \|_{C^{k-1}(\omega')} \le C(\| u_0\|_{C^k(\omega)}, \| u \|_{H^1(\omega)}) \| u \|_{H^1(\omega)},
\end{equation}
where the constnat $C(\| u_0\|_{C^k(\omega)}, \| u \|_{H^1(\omega)})$, depending also on $\omega'$ and $\omega$, is an increasing function of $\| u_0\|_{C^k(\omega)}$ and $\| u \|_{H^1(\omega)}$. In particular, we have
\begin{lemma}\label{lem.Pressure.osc}
    Let $(u,p)$ be a weak solution of \eqref{eq.weakSol.omega}. Suppose that there exists some $\tau_0 > 0$ such that $\| u_0 \|_{C^1(\omega)} \le \tau_0$ and $\| u \|_{L^2(\omega)} \le \tau_0$. Then, for any $\omega' \subset\subset \omega$, we have
    \begin{equation}
        \mathcal{O}_{\omega'} [p] 
        : = \sup_{\omega'} p - \inf_{\omega'} p
        \le C(\tau_0,\omega',\omega) \| u \|_{H^1(\omega)}.
    \end{equation}
\end{lemma}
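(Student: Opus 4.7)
The plan is to combine the pressure-velocity duality captured by Lemma \ref{lem.p2uF} with a standard interior bootstrap for the stationary Navier--Stokes system. First, I would rewrite \eqref{eq.weakSol.omega} as a linear Stokes problem $-\Delta u + \nabla p = \nabla\cdot F$ with source
\begin{equation*}
F := -u\otimes u - u_0\otimes u - u\otimes u_0.
\end{equation*}
Under the hypotheses $\|u_0\|_{C^1(\omega)}\le \tau_0$ and $\|u\|_{L^2(\omega)}\le \tau_0$, the Gagliardo--Nirenberg inequality applied on any smooth subdomain $\tilde\omega\Subset\omega$ controls $\|u\otimes u\|_{L^2(\tilde\omega)}$ by $\|u\|_{L^2(\omega)}^{1/2}\|\nabla u\|_{L^2(\omega)}^{3/2}$, and hence $\|F\|_{L^2(\tilde\omega)} \le C(\tau_0)\,\|u\|_{H^1(\omega)}$.

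Next, I would fix a chain $\omega'\Subset\omega_1\Subset\omega_2\Subset\omega$ of smooth bounded John subdomains, and apply Lemma \ref{lem.p2uF} in $\omega_2$ to the linear Stokes equation above, obtaining
\begin{equation*}
\Big\|p - \dashint_{\omega_2} p\Big\|_{L^2(\omega_2)} \le C\bigl(\|\nabla u\|_{L^2(\omega_2)} + \|F\|_{L^2(\omega_2)}\bigr) \le C(\tau_0)\,\|u\|_{H^1(\omega)}.
\end{equation*}
With this $L^2$ foothold in hand, I would bootstrap using the standard interior Cattabriga--Solonnikov estimates for the Stokes operator: at each step the regularity of $u$ and $p$ is upgraded by one derivative, while the source $F$ becomes more regular (since $u_0\in C^\infty$ and $u$ has just been improved). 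After finitely many iterations on the nested domains, we reach $p\in C^0(\omega_1)$ with
\begin{equation*}
\Big\|p - \dashint_{\omega_2} p\Big\|_{C^0(\omega_1)} \le C(\tau_0,\omega',\omega)\,\|u\|_{H^1(\omega)}.
\end{equation*}

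Since the oscillation is invariant under adding a constant to $p$, this gives $\mathcal{O}_{\omega'}[p] \le 2\|p-\dashint_{\omega_2}p\|_{C^0(\omega')}$, which yields the stated bound. The main obstacle is book-keeping in the bootstrap: one must verify at each stage that the nonlinear contribution $u\cdot\nabla u$ lies in the Sobolev space one expects, and that the accumulated constant remains an \emph{increasing} function of $\tau_0$ and $\|u\|_{H^1(\omega)}$ rather than blowing up. The smallness assumption $\|u\|_{L^2(\omega)}\le\tau_0$ is crucial here, as it allows each nonlinear term to be absorbed at the level of the linear Stokes estimate and prevents a runaway in the iteration — this is routine but tedious, and is the standard mechanism underlying interior regularity for stationary Navier--Stokes with smooth coefficients.
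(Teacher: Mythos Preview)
Your approach is correct and matches the paper's own treatment: the paper does not give a detailed proof of this lemma but simply invokes classical interior regularity theory for the stationary Navier--Stokes system, noting that $\|u\|_{C^k(\omega')} + \|p - \dashint_\omega p\|_{C^{k-1}(\omega')} \le C(\|u_0\|_{C^k(\omega)}, \|u\|_{H^1(\omega)})\,\|u\|_{H^1(\omega)}$, from which the oscillation bound follows with $k=1$. Your outline --- rewrite as a linear Stokes system with source $F = -u\otimes u - u_0\otimes u - u\otimes u_0$, apply Lemma~\ref{lem.p2uF} for the $L^2$ pressure estimate, then bootstrap via Cattabriga--Solonnikov on nested domains --- is exactly the standard mechanism behind that classical result, so you are filling in what the paper leaves implicit.

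One small remark: your final paragraph slightly overstates the role of the smallness hypothesis. No absorption is actually needed in the bootstrap; the iteration is purely an a~priori estimate, and the constant is simply allowed to grow as an increasing function of $\tau_0$ (and, in the paper's general statement, of $\|u\|_{H^1(\omega)}$ as well). The assumption $\|u\|_{L^2(\omega)}\le\tau_0$ serves only to make the dependence of the final constant explicit, not to close any loop.
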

The last lemma is useful for the pointwise estimate of the pressure, particularly in unbounded domains.

    \section{Random cylinders satisfying functional inequalities}\label{sec.ex}

This appendix presents non-trivial examples of random cylinders that satisfy the function inequalities \eqref{ineq.SG} and  \eqref{ineq.LSI}. We remark that the aim is to demonstrate the relevance of assuming these function inequalities, rather than to offer techniques for the construction of general examples that satisfy \eqref{ineq.SG} or \eqref{ineq.LSI}.

    \subsection{An infinite cylinder with Poisson boundary}

In this section, we define a random infinite cylinder with the boundary specified by a Poisson point process. We will show that the law of the cylinder satisfies both \eqref{ineq.SG} and \eqref{ineq.LSI}. To this end,   recall for $0<\e<1$ that $D_{1/\e}$ denotes an infinite cylinder with radius $1/\e$ and the boundary $\Gamma_{1/\e} := \{(x_1,x_2,x_3)\in \R^3\ |\ x_2^2 + x_3^2 = 1/\e^2 \}$. Let $\dd \Gamma_{1/\e}$ denote the surface measure of $\Gamma_{1/\e}= \partial D_{1/\e}$. Notice that $\Gamma_{1/\e}\simeq\R\times\mathbb{S}^1$. 

\paragraph{Scaled cylinder.} Our construction of a random cylinder starts with a Poisson point process defined on the rescaled circular boundary $\Gamma_{1/\e}$ with radius $1/\e$ and  is inspired by the  Poisson point process on a torus \cite[Definition 2]{GloOtt15} and on the Euclidean space \cite[Definition 2.3]{GloOtt17}. We remark that one can actually construct such a process, starting from a suitable stationary Poisson point process on $\R^2$, using \cite[Theorem 5.1]{LasPen2018book} and the quotient mapping from $\R$ to $\mathbb{S}^1$. To be more specific, given a realization $\tilde{Z}=\{\tilde{z}^{(n)}\}_{n\in\N}\subset \Gamma_{1/\e}$ of the Poisson point process with a unit intensity, we define a transformation $A : \Gamma_{1/\e} \to \R^3$ by setting
$$
A(\tilde{z}) := \begin{cases}
    (\tilde{z}_1,0,0)  + (1+\e/2)(0,\tilde{z}_2,\tilde{z}_3) & \text{ if } \tilde{z}\in S_{1}(\tilde{z}^{(n)}) \cap \Gamma_{1/\e},\\
    \tilde{z} & \text{ if } \Gamma_{1/\e} \setminus \cup_{n} S_{1}(\tilde{z}^{(n)}),
\end{cases}
$$
where  $S_{1}(\tilde{z}^{(n)})$ denotes the 3D cylindrical cube with the  center $\tilde{z}^{(n)} \in \Gamma_{1/\e}$; see \eqref{def.cyl.cube} for the definition.  Note that the transformation $A$ admits jumps  near the Poisson points $\tilde{z}^{(n)}$. Let $\tilde{D}^{1/\e}$  be the cylindrical domain with the boundary defined by the range of $A$. By construction, the cylindrical domain $\tilde{D} ^{1/\e}$ is a John domain and is  stationary with respect to $x_1$-translations and $\theta$-rotations. Let $\tilde{\Omega}_{1/\e}$ denote the set of all possible John cylindrical domains $\tilde{D} ^{1/\e}$ 
and $\tilde{\mathcal{F}}_{1/\e}$ the corresponding $\sigma$-algebra. We denote by $\tilde{\mathbb{P}}_{1/\e}$ the law of the random domain induced by the law of the  Poisson point process on $\Gamma_{1/\e}$. Given a random variable $X_{1/\e}$ on the probability space $(\tilde{\Omega}_{1/\e},\tilde{\mathcal{F}}_{1/\e},\tilde{\mathbb{P}}_{1/\e} )$, we write the expectation $\E_{1/\e}[X]$ of $X$  by $\E_{1/\e}[X]=\E_{0,1/\e}[X\circ D]$, where $\E_{0,1/\e} [\cdot]$ denotes the expectation taken against the law of random domain $D \in \tilde{\Omega}_{1/\e}$. For clarity, we omit the dependence of the random domain $D$ on $1/\e$ when it is clear from the context.

Thanks to the log-Sobolev inequality for Poisson measure \cite[Corollary 2.1]{wu2000new} (see also \cite{ane2000logarithmic}), the induced law of the random domain $D\in \tilde{\Omega}_{1/\e}$ satisfies a log-Sobolev inequality with an $\e$-independent constant $C>0$: 
\begin{align}
\begin{split}\label{eq:scaledLSI}
\Ent[X_{1/\varepsilon}]  &\le C
\int_{\Gamma_{1/\e}}
\E_{0,1/\e}
\Big[\big(
X_{1/\e}\circ D(\tilde{Z}\cup\{\tilde{z}\}) - X_{1/\e}\circ D(\tilde{Z})
\big)^2
\Big]
\dd (\Gamma_{1/\e})_{\tilde{z}}\\
&\le C
\int_{\Gamma_{1/\e}}
\E_{0,1/\e}
\Big[
\Big(
\underset{D|_{S_{1}(\tilde{z})}}{\osc} X_{1/\e}
\Big)^2
\Big]
\dd (\Gamma_{1/\e})_{\tilde{z}}.
\end{split}
\end{align}

\paragraph{The target cylinder.}  Now we turn to the construction of the target random domain and show that its law satisfies the desired log-Sobolev inequality.  Consider the scaling isomorphism  $\Phi:\Gamma_{1/\e}\mapsto\Gamma$ defined by $\Phi(x) = \e x$ for every $x\in \Gamma_{1/\e}$.    
This isomorphism transforms the Poisson point process $\tilde{Z}= \{\tilde{z}^{(n)}\}_{n\in\N}$ on $\Gamma_{1/\e}$ to the one $Z= \{z^{(n)}\}_{n\in \N} =\{\e\tilde{z}^{(n)}\}_{n\in\N}$ on $\Gamma$. This also allows us define the John domain 
$
D^{\e} := \e \tilde{D}^{1/\e} 
$
as in Definition \ref{def.John2} and the set $\Omega_{\e} := \e \tilde{\Omega}_{1/\e}$. In addition, with the isomorphism $\Phi$, given every random variable $X$ defined on $\Omega^\e$, we obtain a random variable $X_{1/\e}:= X\circ \Phi$ defined on $\tilde{\Omega}_{1/\e}$. Moreover, thanks to \eqref{eq:scaledLSI}, it holds that 
\begin{align}
\begin{split}
\Ent[X]
=
\Ent[X\circ\Phi]
&\le C
\int_{\Gamma_{1/\e}}
\E_{0,1/\e}
\Big[
\Big(
\underset{D|_{S_{1}(\tilde{z})}}{\osc} X\circ\Phi
\Big)^2
\Big]
\dd (\Gamma_{1/\e})_{\tilde{z}}.
\end{split}
\end{align}
An application of the change of variables $z=\e\tilde{z}$ and the argument of the proof of \cite[Lemma 2.4]{GloOtt17}, we have from above that 
\begin{align}
\begin{split}
\Ent[X]
&\le C
\int_{\Gamma_{1/\e}}
\E_{0,1/\e}
\Big[
\Big(
\underset{D|_{S_{1}(\tilde{z})}}{\osc} X\circ\Phi
\Big)^2
\Big]
\dd (\Gamma_{1/\e})_{\tilde{z}}\\
&= C
\e^{-2}
\int_{\Gamma}
\E
\Big[
\Big(
\underset{D^\e|_{S_{\e}(z)}}{\osc} X
\Big)^2
\Big]
\dd \Gamma_{z}.
\end{split}
\end{align}
Therefore, the law of  $D^{\e}$  satisfies \eqref{ineq.LSI}, which in turn implies the inequality \eqref{ineq.SG}.

    \subsection{An infinite cylinder with Bernoulli radii} 

In this section, we define the random infinite cylinders with boundaries specified by Bernoulli process. Assume for simplicity that $M=1/\e \in  \N$.  Let $b: \R \times [0,2\pi] \to \R$ be a random binary function defined by
$$
b_{\bxi}(x_1,\theta)=  \sum_{i\in \Z, 0\le j\le M-1} \bxi_{ij} \mathbf{1}_{[\e i, \e(i+1))}(x_1) \mathbf{1}_{[\e j 2\pi, \e(j+1)2\pi)}(\theta) ,
$$
where  $\bxi =\{\xi_{ij}\}\sim \PP_{\bxi} := \text{Ber}(1/2)^{\otimes\infty}$ is a sequence of independent Bernoulli random variables with equal probability, i.e.  $\PP(\xi_{ij} = 0)= \PP(\xi_{ij} = 1)=1/2$.

We define a random infinite cylinder with the binary radius function $1 + \e b_{\bxi}(x_1,\theta)$, namely we set
$$
D^\e = D^\e(\bxi) := \{(x_1,r,\theta) \in \R^3 \ ~|~ x_1\in \R, \mkern9mu \theta \in [0,2\pi], \mkern9mu 0\leq r\leq 1+\e b_{\bxi}(x_1,\theta)\}. 
$$
By the geometrical construction, it is not hard to see that $D^\e$ is a John domain in Definition \ref{def.John2} with constants independent of $\bxi$, whose boundary is given by
$$
 \Gamma^{\e}:= \{(x_1,r,\theta) \in \R^3 \ ~|~ x_1\in \R, \mkern9mu \theta \in [0,2\pi], \mkern9mu r= 1+\e b_{\bxi}(x_1,\theta)\}.
$$
Let $\Gamma = \{ r = 1 \}$ and $\Gamma_{ij} = \{(x_1, r, \theta ) ~|~r= 1, x_1\in [\e i, \e(i+1)), \theta \in [\e2\pi j, \e2\pi (j+1)   \}$ for $i\in \Z, 0\le j\le M-1$. Clearly, $\{ \Gamma_{ij} \}$ are mutually disjoint and $\Gamma = \cup_{ij} \Gamma_{ij}$.

Let $\Omega^{\e}$ be the sample space of all the random infinite cylindrical domains $D^\e$ and let $\PP$  be the induced  probability measure  on the sample space $\Omega^\e$  by the Bernoulli measure $\PP_{\bxi}$. It is obvious that for any $i\in \Z$ and $0\leq j\leq M-1$, the measure $\PP$ is stationary with respect to translation along $x_1$-direction by increments of $i\e$ and rotation in $\theta$ by   angles of $2\pi j\e$. Further, any realization of $D^\e$  is a John domain. By slightly abusing the terminology, we use $\E$ to denote the expectation against both the law $\PP$ of the random cylinder and the Bernoulli measure $ \PP_{\bxi}$.

Let $X$ be a random variable defined on $\Omega^\e$. Define $F: \{0,1\}^{\infty} \rightarrow \R$ by setting $F(\bxi) := X\circ D^\e(\bxi)$. Then it follows from the log-Sobolev inequality \cite[Theorem 1]{bobkov1998modified} of the Bernoulli measure $\PP_{\bxi}$ that 
\begin{equation}\label{eq:entber}
    \Ent[X] = \Ent[F]
    \leq \frac{1}{4} \E\sum_{ij} |F(\bxi+\mathbf{e}_{ij}) - F(\bxi)|^2,
\end{equation}
where $\{\mathbf{e}_{ij}\}$ is the canonical basis of $\R^{\infty}$ and the addition inside $F$  is modulo $2$. Moreover, by the definition of $F$, we see that
$$\begin{aligned}
|F(\bxi+\mathbf{e}_{ij}) - F(\bxi)| & = |X \circ D^\e(\bxi+\mathbf{e}_{ij}) -  X \circ D^\e(\bxi)|\\
& \leq \inf_{z\in \Gamma_{ij}} \underset{D^\e|_{S_{2\pi \e}(z)}}{\osc} X \\
& \leq \frac{1}{|\Gamma_{ij}|} \int_{\Gamma_{ij}} \underset{D^\e|_{S_{2\pi \e}(z)}}{\osc} X \dd \Gamma_z.
\end{aligned}
$$
Summing the inequality above over $i$ and $j$ and plugging the resulting estimate into  \eqref{eq:entber} leads to 
\begin{equation}
    \Ent[X] \le \frac{1}{8\pi \e^2} \E \bigg[ \int_{\Gamma} \Big( \underset{D^\e|_{S_{2\pi \e}(z)}}{\osc} X \Big)^2 \dd \Gamma_z \bigg],
\end{equation}
where we have used the fact $|\Gamma_{ij}| = 2\pi \e^2$. This is the desired log-Sobolev inequality, and the spectral gap inequality follows as a direct consequence.

\begin{remark}\label{rem.Ber}
    The probability space $(\Omega^\e, \mathcal{F}^\e, \PP)$ constructed above by Bernoulli process is not stationary with respect to all the $x_1$-translations and $\theta$-rotations. Indeed, it is stationary only with respect to those $x_1$-translations and $\theta$-rotations in particular steps, namely,
    \begin{equation}
        \PP  \circ R_{\theta_j} = \PP  \circ T_{t_i} =  \PP ,
    \end{equation}
    for $\theta_j = 2\pi \e j$ and $t_i = \e i$ with $i,j\in \Z$. Nevertheless, our main results still hold for this case. The only difference in the proof is that $\E[v^\e_{\rm bl}]$ is not a simple laminar flow, but a periodic flow with period $2\pi \e$ in $\theta$ and $\e$ in $x_1$. Then we only need to take another step to show the deterministic convergence rate of this periodic flow to a real laminar flow, which is much easier by using the Green's representation in $D^0$. The details will be omitted.
\end{remark}

    \section{Concentration bounds under functional inequalities}

This appendix collects the inequalities utilized in Section \ref{Conc} to establish the concentration of boundary layers based on functional inequalities.

    \subsection{Moment bound under the spectral gap inequality}

The following proposition is used in the proof of Proposition \ref{prop.flu.vbl} (\ref{item1.prop.flu.vbl}).

\begin{proposition}\label{prop.SG.p-moment}
    Assume that $(\Omega^\e, \mathcal{F}^\e, \PP)$ satisfies \eqref{ineq.SG}. Let $X$ be a scalar random variable. Then we have, for any $p \ge 1$, 
    \begin{equation}
    \begin{aligned}
        \E[|X - \E[X]|^{2p}] 
        &\le (Cp)^{2p} \E\bigg[ \int_{\Gamma} \e^{-2} \Big(\underset{D^\e|_{S_{\e}(z)}}{\osc} [X]\Big)^{2p} \dd \Gamma_z \bigg] \\
        &\quad
        + (Cp)^{2p} \E \bigg[ \bigg( \int_\Gamma \Big(\e^{-1} \underset{D^\e|_{S_{\e}(z)}}{\osc} [X]\Big)^2 \dd \Gamma_z \bigg)^p \bigg].
    \end{aligned}
    \end{equation}
    The constant $C$ is independent of $p$ and $X$.
\end{proposition}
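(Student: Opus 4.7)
My plan is to apply the spectral gap inequality \eqref{ineq.SG} to a suitable power of $X - \E[X]$ and iterate in $p$, in the spirit of $p$-moment estimates from stochastic homogenization (e.g.\ \cite{GloOtt17, DueGlo20-2}). Write $Y := X - \E[X]$ and set
$$
A_p := \E[|Y|^{2p}], \quad B_p := \E\Bigl[\int_\Gamma \bigl(\underset{D^\e|_{S_\e(z)}}{\osc} X\bigr)^{2p} \dd \Gamma_z\Bigr], \quad Z := \int_\Gamma \bigl(\underset{D^\e|_{S_\e(z)}}{\osc} X\bigr)^2 \dd \Gamma_z, \quad C_p := \E[Z^p],
$$
so that the target reads $A_p \le (Cp)^{2p}(\e^{-2} B_p + \e^{-2p} C_p)$. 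The base case $p=1$ follows at once by applying \eqref{ineq.SG} directly to $X$ and using $\underset{D^\e|_{S_\e(z)}}{\osc}(X - \E X) = \underset{D^\e|_{S_\e(z)}}{\osc} X$.

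For $p \ge 2$, I would first apply \eqref{ineq.SG} to $|Y|^p$, giving
$$
A_p - (\E|Y|^p)^2 = \var|Y|^p \le C\e^{-2} \E\Bigl[\int_\Gamma \bigl(\underset{D^\e|_{S_\e(z)}}{\osc} |Y|^p\bigr)^2 \dd \Gamma_z\Bigr],
$$
and then bound the oscillation of $|Y|^p$ pointwise. The key estimate is $||a|^p - |b|^p| \le p (\max(|a|,|b|))^{p-1}|a-b|$: combined with the fact that any two admissible configurations produce values of $X$ (hence of $Y$) differing by at most $\underset{D^\e|_{S_\e(z)}}{\osc} X$, it yields
$$
\underset{D^\e|_{S_\e(z)}}{\osc} |Y|^p \le 2p\bigl(|Y| + \underset{D^\e|_{S_\e(z)}}{\osc} X\bigr)^{p-1} \underset{D^\e|_{S_\e(z)}}{\osc} X.
$$
Squaring, splitting via $(a+b)^{2p-2} \le 2^{2p-2}(a^{2p-2}+b^{2p-2})$, integrating over $\Gamma$ and taking expectation produces a bound of the form $C^p p^2 (\E[|Y|^{2p-2} Z] + B_p)$. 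H\"older's inequality with conjugate exponents $p/(p-1)$ and $p$ then yields $\E[|Y|^{2p-2} Z] \le A_p^{(p-1)/p} C_p^{1/p}$, and a weighted Young inequality $A_p^{(p-1)/p} C_p^{1/p} \le \tfrac{p-1}{p}\lambda A_p + \tfrac{1}{p\lambda^{p-1}} C_p$, with $\lambda > 0$ chosen small, allows me to absorb the $A_p$ contribution into the left-hand side.

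The residual term $(\E|Y|^p)^2$ is controlled via Cauchy--Schwarz by $(\E|Y|^p)^2 \le A_{p-1} A_1$, opening the way to an induction on $p$. Elementary consequences of Jensen's inequality for $z \mapsto z^{p/(p-1)}$, namely $C_1 \le C_p^{1/p}$ and $C_{p-1} \le C_p^{(p-1)/p}$, supply the comparisons between the $C_q$'s at different $q$ needed to propagate the bound in the $\e^{-2p}C_p$ channel, while the $\e^{-2} B_p$ channel is handled directly by the last term above.

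The main technical obstacle will be the bookkeeping of constants: achieving the prefactor $(Cp)^{2p}$, with polynomial $p$-dependence in the base, rather than the crude $C^{p^2}$ that a naive Young-based iteration produces. Overcoming this requires choosing $\lambda$ carefully as a function of $p$ and $\e$ (essentially $\lambda \sim \e^2/(Cp)$ up to absolute constants), balancing the use of the induction hypothesis on $A_{p-1}$ and $A_1$, and keeping the $B_p$ and $C_p$ contributions separated throughout the induction. Apart from this constant-tracking, each individual step is a routine application of \eqref{ineq.SG}, H\"older, Young, and induction.
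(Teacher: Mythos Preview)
Your scaffolding --- apply \eqref{ineq.SG} to $|Y|^p$ with $Y=X-\E X$, bound $\osc[|Y|^p]$ pointwise via $\bigl||a|^p-|b|^p\bigr|\le p\max(|a|,|b|)^{p-1}|a-b|$ together with $\sup|Y|\le |Y|+\osc X$, then split, apply H\"older and Young, and absorb the resulting $A_p$ piece --- is exactly what the paper does. The divergence is only in how you treat the residual $(\E|Y|^p)^2$.

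The paper does not induct. By log-convexity of $L^q$ norms one has, for $p>2$,
\[
(\E|Y|^p)^2 \le (\E|Y|^{2p})^{\frac{p-2}{p-1}}(\E|Y|^2)^{\frac{p}{p-1}},
\]
and Young's inequality with a small parameter $\delta$ turns this into $\tfrac{p-2}{p-1}\delta\,A_p + \tfrac{1}{p-1}\delta^{2-p}(\E|Y|^2)^p$. The first term is absorbed; the second is bounded by $(C\e^{-2}\E Z)^p \le C^p\e^{-2p}C_p$ using \eqref{ineq.SG} once more on $X$ itself and Jensen. For $1\le p\le 2$ Jensen alone gives $(\E|Y|^p)^2\le(\E|Y|^2)^p$. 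No recursion, no $p$-level constant tracking, and the argument works for all real $p\ge 1$.

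Your route $(\E|Y|^p)^2 \le A_{p-1}A_1$ followed by induction has a structural issue beyond bookkeeping. The induction hypothesis on $A_{p-1}$ carries an $\e^{-2}B_{p-1}$ term, and multiplying by $A_1\lesssim\e^{-2}C_1$ produces a cross term $\e^{-4}B_{p-1}C_1$. Because $\Gamma$ has infinite surface measure, there is no H\"older or Jensen inequality relating $B_{p-1}=\E\int_\Gamma(\osc X)^{2(p-1)}$ to $B_p$, and you have not explained how $\e^{-4}B_{p-1}C_1$ is to be controlled by $\e^{-2}B_p+\e^{-2p}C_p$ with constants of the right order. Your remark that ``the $\e^{-2}B_p$ channel is handled directly'' accounts for the $B_p$ contribution arising at level $p$, but not for the $B_{p-1}$ contribution fed back through the induction. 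The paper's interpolation step sidesteps this entirely: the $B_p$ term enters exactly once, from the pointwise splitting $(|Y|+\osc X)^{p-1}\le 2|Y|^{p-1}+(Cp)^p(\osc X)^{p-1}$ at level $p$, while the $(\E|Y|^p)^2$ contribution lands cleanly in the $C_p$ channel.
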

\begin{proof}
     We follow an argument in \cite[Section 3.1]{DueGlo20-1}. For simplicity, we let $\osc[\,\cdot\,]$ denote $\osc_{D^\e|_{S_{\e}(z)}} [\,\cdot\,]$ in this proof. Without loss of generality, assume $\E[X] = 0$. Applying \eqref{ineq.SG} to the random variable $Y = |X|^p = |X - \E[X]|^p$, we have
    \begin{equation}\label{est.EX-2p-1}
        \E[|X|^{2p}] \le \E[|X|^p]^2 + C\E \bigg[ \int_\Gamma \e^{-2} \osc[|X|^p]^2 \dd \Gamma_z \bigg].
    \end{equation}
    For $1\le p\le 2$, the Jensen's inequality gives
    \begin{equation}
        \E[|X|^p]^2 \le \E[X^2]^p \le C^p \E \bigg[ \bigg( \int_\Gamma (\e^{-1} \osc[X])^2  \dd \Gamma_z\bigg)^p \bigg],
    \end{equation}
    where we have also used \eqref{ineq.SG} and the H\"{o}lder  inequality in the second inequality. For $p>2$, the Young's inequality gives, for any $\delta>0$,
    \begin{equation}
        \E[|X|^p]^2 \le \frac{p-2}{p-1} \delta \E[|X|^{2p}] + \frac{1}{p-1} \delta^{2-p} \E[|X|^2]^p.
    \end{equation}
    By choosing $\delta$ small (say, $\delta = 1/2$), the first term of the right-hand side of the above inequality can be absorbed to the left-hand side of \eqref{est.EX-2p-1}. It follows that, for any $p\ge 1$,
    \begin{equation}\label{est.Ex-2p-4}
        \E[|X|^{2p}] \le C^p \E \bigg[ \bigg( \int_\Gamma (\e^{-1} \osc[X])^2 \dd \Gamma_z\bigg)^p \bigg] + C\E \bigg[ \int_\Gamma \e^{-2} \osc[|X|^p]^2 \dd \Gamma_z \bigg].
    \end{equation}
    Thus it suffices to estimate the last term in the above inequality.

    Using the inequality $|a^p - b^p| \le p|a-b|(|a|^{p-1} + |b|^{p-1})$ for all $a,b\in\R$, we have
    \begin{equation}
        \osc[|X|^p] \le 2p \osc[X] \sup |X|^{p-1},
    \end{equation}
    where
    \begin{equation}
        \sup |X|: = \sup_{D^\e|_{S_{\e}(z)}} |X| := \sup \{|X(\tilde{D}^\e)~|~ \tilde{D}^\e \in\Omega^\e, \mkern9mu \tilde{D}^\e = D^\e \mbox{ in } \R^3\setminus S_{\e}(z)\} .
    \end{equation}
    It follows that
    \begin{equation}
        \osc[|X|^p] \le 2p \osc[X] (\sup |X|) ^{p-1} \le 2p \osc[X] ( |X| + \osc[X])^{p-1}.
    \end{equation}
    Then by using the inequality $(a+b)^{p-1} \le 2a^{p-1} + (Cp)^{p} b^{p-1}$ for all $a,b\ge0$, we have
    \begin{equation}
        \osc[|X|^p] \le 4p |X|^{p-1} \osc[X] + (Cp)^p \osc[X]^p.
    \end{equation}
    Thus, we have
    \begin{equation}\label{est.oscXp-1}
    \begin{aligned}
        \int_\Gamma \e^{-2} \osc[|X|^p]^2 \dd \Gamma_z &\le (8p)^2 |X|^{2(p-1)} \int_\Gamma (\e^{-1}\osc[X])^2 \dd \Gamma_z\\
        &\quad
        + 2(Cp)^{2p} \int_\Gamma \e^{-2} \osc[X]^{2p} \dd \Gamma_z.
    \end{aligned}
    \end{equation}
    Then the H\"{o}lder and Young's inequalities give
    \begin{equation}
    \begin{aligned}\label{est.oscXp-2}
        & \E \bigg[ (8p)^2 |X|^{2(p-1)} \int_\Gamma (\e^{-1}\osc[X])^2 \Gamma_z \bigg] \\
        & \le \frac{p-1}{p} \delta^{p/(p-1)} \E [|X|^{2p}] + \delta^{-p}(8p)^{2p} \E \bigg[ \bigg(  \int_\Gamma (\e^{-1} \osc[X])^2 \Gamma_z \bigg)^{p} \bigg].
    \end{aligned}
    \end{equation}
    Inserting \eqref{est.oscXp-1}--\eqref{est.oscXp-2} into \eqref{est.Ex-2p-4}, and by taking $\delta$ small, we see that $\E [|X|^{2p}]$ on the right-hand side can be absorbed to the left-hand side, which gives the desired estimate.
\end{proof}

    \subsection{Exponential moment bound under the  logarithmic Sobolev inequality}

The following proposition is used in the proof of Proposition \ref{prop.flu.vbl} (\ref{item2.prop.flu.vbl}).

\begin{proposition}\label{prop.LSI}
    Assume that $(\Omega^\e, \mathcal{F}^\e, \PP)$ satisfies \eqref{ineq.LSI}. Moreover, assume that a scalar random variable $X$ has $L^2$-bounded local oscillation, i.e. there exists $L^\prime >0$ such that 
\begin{equation}\label{eq:avrosc}
    \sup_{D^\e\in\Omega^\e}  \int_{\Gamma}
\Big(
\underset{D^\e|_{S_{\e}(z)}}{\osc} [X]
\Big)^2 
\dd \Gamma_z \leq (L^\prime)^2
\end{equation} 
and that $X$ has uniformly bounded local oscillation, i.e. there exists $L>0$ such that 
\begin{equation}\label{eq:bdosc}
    \sup_{D^\e\in\Omega^\e}\sup_{z\in \Gamma} \underset{D^\e|_{S_{\e}(z)}}{\osc} [X] \leq L.
\end{equation}
Then we have, for any $\lambda > 0$, 
    \begin{equation}\label{eq:expmoment}
        \E \big[\exp(\lambda (X-\E[X]))\big] \leq \exp\Big(\frac{C_0\e^{-2} (L^{\prime})^2}{L}\lambda (e^{L\lambda}-1)\Big) 
    \end{equation}
and, for any $t > 0$,
\begin{equation}\label{eq:gaussdecay}
    \PP [|X-\E[X]| \geq t]
  \leq 2 \inf_{\lambda>0}  \exp\Big(-\lambda t + \frac{C_0\e^{-2} (L^{\prime})^2}{L} \lambda (e^{L\lambda}-1)\Big).
\end{equation}
\end{proposition}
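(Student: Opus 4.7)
The plan is to run the classical Herbst argument, adapted to the oscillation-based logarithmic Sobolev inequality \eqref{ineq.LSI}. Without loss of generality assume $\E[X]=0$, and set $H(\lambda):=\E[e^{\lambda X}]$ and $K(\lambda):=\log H(\lambda)$. Applying \eqref{ineq.LSI} to the random variable $Y:=e^{\lambda X/2}$ and computing $\Ent[Y]$ directly yields
\[
\lambda H'(\lambda)-H(\lambda)\log H(\lambda)=\Ent[Y]\le C\,\E\bigg[\e^{-2}\int_{\Gamma}\big(\underset{D^\e|_{S_{\e}(z)}}{\osc}e^{\lambda X/2}\big)^{2}\,\dd\Gamma_{z}\bigg].
\]

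The central ingredient will be a pointwise bound for the oscillation of $e^{\lambda X/2}$. For any two admissible competitors $\tilde{D}^\e,\hat{D}^\e$ in \eqref{def.osc}, the estimate $|e^{a}-e^{b}|\le|a-b|\,e^{\max(a,b)}$ gives
\[
\big|e^{\lambda X(\tilde{D}^\e)/2}-e^{\lambda X(\hat{D}^\e)/2}\big|\le\tfrac{\lambda}{2}|X(\tilde{D}^\e)-X(\hat{D}^\e)|\,e^{\lambda\max(X(\tilde{D}^\e),X(\hat{D}^\e))/2}.
\]
Since $D^\e$ itself is a valid competitor in the supremum defining $\osc$, the uniform bound \eqref{eq:bdosc} implies $X(\tilde{D}^\e),X(\hat{D}^\e)\le X(D^\e)+L$, so $e^{\lambda\max(\cdot)/2}\le e^{\lambda L/2}\,e^{\lambda X(D^\e)/2}$. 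Squaring, taking the supremum over $\tilde{D}^\e,\hat{D}^\e$, and integrating over $\Gamma$ using \eqref{eq:avrosc} yields
\[
\E\bigg[\e^{-2}\int_{\Gamma}\big(\underset{D^\e|_{S_{\e}(z)}}{\osc}e^{\lambda X/2}\big)^{2}\,\dd\Gamma_{z}\bigg]\le\frac{(L')^{2}\lambda^{2}}{4\e^{2}}\,e^{\lambda L}\,H(\lambda).
\]

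Substituting back and dividing by $H(\lambda)$ gives the Herbst differential inequality $\lambda K'(\lambda)-K(\lambda)\le\frac{C(L')^{2}}{4\e^{2}}\lambda^{2}e^{\lambda L}$, equivalently $\frac{\dd}{\dd\lambda}\big(K(\lambda)/\lambda\big)\le\frac{C(L')^{2}}{4\e^{2}}e^{\lambda L}$. Because $\E[X]=0$ one has $\lim_{\lambda\to0^{+}}K(\lambda)/\lambda=K'(0)=0$, so integrating from $0$ to $\lambda$ yields $K(\lambda)\le\frac{C(L')^{2}}{4\e^{2}L}\lambda(e^{L\lambda}-1)$, which is precisely \eqref{eq:expmoment} with $C_{0}=C/4$. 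The concentration inequality \eqref{eq:gaussdecay} then follows from the Chernoff-Markov inequality applied to each tail: since the hypotheses \eqref{eq:avrosc}--\eqref{eq:bdosc} are invariant under $X\mapsto-X$, the exponential moment bound holds for $-X$ as well, and combining both tails produces the factor $2$ after taking the infimum over $\lambda>0$.

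The main technical obstacle is the oscillation estimate for $e^{\lambda X/2}$, where one must extract a clean multiplicative $e^{\lambda L}$ factor rather than one depending on $\sup X$; it is precisely the uniform pointwise bound \eqref{eq:bdosc} (and not merely the integrated bound \eqref{eq:avrosc}) that permits this extraction by comparing each competitor's value of $X$ to $X(D^\e)$, and thereby produces the Bennett-type right-hand side $\lambda(e^{L\lambda}-1)$ after Herbst integration.
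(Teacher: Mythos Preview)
Your proof is correct and follows essentially the same Herbst argument as the paper: apply \eqref{ineq.LSI} to $e^{\lambda X/2}$, bound the oscillation of the exponential pointwise via an elementary inequality together with \eqref{eq:bdosc}, integrate using \eqref{eq:avrosc} to obtain the differential inequality for $K(\lambda)/\lambda$, and then integrate and apply Chernoff--Markov. The only cosmetic difference is that the paper uses $|e^a-e^b|\le(e^a+e^b)|a-b|$ in place of your $|e^a-e^b|\le|a-b|e^{\max(a,b)}$, which leads to the same bound up to the value of the absolute constant $C_0$.
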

\begin{proof}
    We follow an argument in \cite[Section 3.2]{DueGlo20-1}, which is essentially based on the Herbst's argument. For simplicity, we let $\osc[\,\cdot\,]$ denote $\osc_{D^\e|_{S_{\e}(z)}} [\,\cdot\,]$ in this proof. First, using \eqref{ineq.LSI}, we obtain for the random variable $e^{\lambda X/2}$: 
\begin{equation}
\label{eq:bdlsi}
    \Ent[e^{\lambda X}] = \E [\lambda X e^{\lambda X}] - \E [e^{\lambda X}] \cdot  \log \E [e^{\lambda X}] \leq C \E \bigg[\int_{\Gamma} \e^{-2} (\osc[e^{\lambda X/2}])^2 \dd \Gamma_z\bigg]. 
\end{equation}
    By using the inequality $|e^a - e^b| \leq (e^a + e^b) |a-b|$ for all $a,b\in \R$, we have
$$\begin{aligned}(\osc [e^{\lambda X/2}])^2 
&\leq 4\lambda^2 e^{\lambda X} e^{\lambda{\osc}[X]} (\osc[X])^2.
\end{aligned}
$$
Thanks to the assumptions \eqref{eq:avrosc}--\eqref{eq:bdosc}, we then have
\[
\int_{\Gamma} \e^{-2} (\osc[e^{\lambda X/2}])^2 \dd \Gamma_z \leq C\e^{-2} (L')^2 \lambda^2 e^{\lambda L}.
\]
Define $\phi(\lambda) = \E[e^{\lambda X}]$ for $\lambda\ge0$. Then inserting the last display into \eqref{eq:bdlsi} leads to 
$$
\lambda \phi^\prime(\lambda) - \phi(\lambda) \log \phi(\lambda) \leq C\e^{-2} (L')^2 \lambda^2 e^{\lambda L}.
$$
Let $\psi(\lambda) =   \log \phi(\lambda) $. The estimate above implies that 
 $$
\Big(\frac{\psi(\lambda) }{\lambda}\Big)^{\prime} \leq C\e^{-2} (L')^2 e^{\lambda L}.
 $$
 Integrating both sides on $[a,\lambda]$ with $0<a\le \lambda$ shows that
 $$
 \frac{\psi(\lambda)}{\lambda} - \frac{\psi(a) - \psi(0)}{a}
 \le \frac{C\e^{-2} (L')^2}{L} (e^{\lambda L}- e^{a L}),
 $$
where $\psi(0)=0$ is used. Since $\psi'(0)=\phi'(0)/\phi(0)=\E[X]$, taking the limit $a\to0$ gives 
$$
 \log \E \big[\exp(\lambda (X-\E[X]))\big]
  =
 \psi(\lambda)
 - \E[\lambda X]
 \le \frac{C\e^{-2} (L')^2}{L} \lambda (e^{\lambda L}- 1),
 $$
which proves the first assertion \eqref{eq:expmoment}. Moreover, the Markov's inequality leads to
$$\begin{aligned}
    \PP [X-\E[X] \geq t]
 & \leq \inf_{\lambda>0} e^{-\lambda t} 
 \E \big[\exp(\lambda (X-\E[X]))\big]\\
 & \leq \inf_{\lambda>0}  \exp\Big(-\lambda t + \frac{C\e^{-2} (L')^2}{L} \lambda (e^{\lambda L}- 1)\Big).
\end{aligned} 
$$
Notice that the assumptions \eqref{eq:avrosc}--\eqref{eq:bdosc} are invariant under $X\mapsto-X$. Hence the same argument as above is valid for $-X$, which implies the second assertion \eqref{eq:gaussdecay}.
\end{proof}

\addcontentsline{toc}{section}{References}
\bibliography{Mybib}
\bibliographystyle{alpha}

\end{document}